\documentclass[12pt]{article}

\usepackage[utf8]{inputenc}
\usepackage[T1]{fontenc}
\usepackage{lmodern}
\usepackage[english]{babel}
\usepackage{amsmath,mathtools,amsthm,amssymb,amsfonts}
\usepackage{color}
\textwidth 6.5in
\oddsidemargin -.02in
\evensidemargin -.02in
\textheight 7.7in
\topmargin .5in
\usepackage{comment}

\usepackage{hyperref}
\usepackage{mathrsfs}
\usepackage{graphicx}
\usepackage{enumitem}
\usepackage{tikz} 
\usepackage{esint}
\usepackage{pgfplots}
\usepackage{esvect}
\usepackage{pdfpages}
\usepackage{nicefrac}
\usepackage{leftidx, tensor}
\usepackage{cleveref}
\usepackage{accents}
\usetikzlibrary{shapes,arrows}
\usetikzlibrary{calc}
\usetikzlibrary{shapes.geometric}
\usetikzlibrary{shapes.arrows}
\usetikzlibrary{arrows.meta}
\usetikzlibrary{decorations.markings}
\usetikzlibrary{fit}
\usetikzlibrary{patterns}
\usetikzlibrary{hobby}
\usepgfplotslibrary{patchplots}
\pgfplotsset{compat=1.11}

\renewcommand{\thefootnote}{\fnsymbol{footnote}}
\newcommand{\definedas}{\mathrel{\raise.095ex\hbox{\rm :}\mkern-5.2mu=}}

\newcommand{\R}{\mathbb{R}}
\newcommand{\N}{\mathbb{N}}

\newcommand{\Sbb}{\mathbb{S}}

\renewcommand{\d}{\,\mathrm{d}}
\newcommand{\graph}{\,\mathrm{graph}}

\newcommand{\Hess}{\mathrm{Hess}}

\newcommand{\ul}[1]{\underline{#1}}

\newcommand{\btr}[1]{\left\vert#1\right\vert}
\newcommand{\newbtr}[1]{\vert#1\vert}
\newcommand{\norm}[1]{\btr{\btr{#1}}}

\newcommand{\spann}[1]{\left\langle#1\right\rangle}

\newcommand{\Ric}{\mathrm{Ric}}

\newcommand{\Rm}{\mathrm{Rm}}

\newcommand{\two}{\operatorname{II}}
\newcommand{\tr}{\text{tr}}
\newcommand{\dive}{\operatorname{div}}

\newcommand{\newnorm}[1]{\vert\vert#1\vert\vert}

\theoremstyle{plain}
\newtheorem{thm}{Theorem}[section]
\newtheorem{prop}[thm]{Proposition}

\newtheorem{lem}[thm]{Lemma}

\theoremstyle{definition}
\newtheorem{defi}[thm]{Definition}

\newtheorem{bem}[thm]{Remark}
\newtheorem{kor}[thm]{Corollary}


\begin{document}
		\begin{center}
			{\LARGE {A De\,Lellis--M\"uller type estimate on the Minkowski lightcone}\par}
		\end{center}
		\vspace{0.5cm}
		\begin{center}
			{\large Markus Wolff\footnote[2]{wolff@math.uni-tuebingen.de}}\\
			\vspace{0.4cm}
			{\large Department of Mathematics}\\
			{\large Eberhard Karls Universit\"at T\"ubingen}
		\end{center}
		\vspace{0.4cm}
		\begin{abstract}
			We prove an analogue statement to an estimate by De\,Lellis--M\"uller in $\R^3$ on the standard Minkowski lightcone. More precisely, we show that under some additional assumptions, any spacelike cross section of the standard lightcone is $W^{2,2}$-close to a round surface provided the trace-free part of a scalar second fundamental form $A$ is sufficiently small in $L^2$. To determine the correct intrinsically round cross section of reference, we define an associated $4$-vector, which transforms equivariantly under Lorentz transformations in the restricted Lorentz group. A key step in the proof consists of a geometric, scaling invariant estimate, and we give two different proofs. One utilizes a recent characterization of singularity models of null mean curvature flow along the standard lightcone by the author, while the other is heavily inspired by an almost-Schur lemma by De\,Lellis--Topping.
		\end{abstract}
		\renewcommand{\thefootnote}{\arabic{footnote}}
		\setcounter{footnote}{0}
	\section{Introduction}
	It is a well-known fact that any closed, embedded hypersurface in $\R^3$ with vanishing trace-free part $\accentset{\circ}{h}\equiv 0$ of the second fundamental form has constant mean curvature $H$, and is thus a round sphere by \cite{aleks}. De\,Lellis--M\"uller \cite{delellismueller} have proven a quantitative statement motivated by this in the sense that if $\accentset{\circ}{h}$ is sufficiently small in $L^2$, then the surface $\Sigma$ under consideration is $W^{1,2}$-close to a round sphere $\Sbb^2_r(\vec{a})$, where $r$ and $\vec{a}$ denote the area radius and Euclidean center of $\Sigma$, respectively.
	
	In a recent paper \cite{wolff1} the author introduced a \emph{scalar second fundamental form} A of spacelike cross sections $\Sigma$ of the standard Minkowski lightcone, such that the trace-free part $\accentset{\circ}{A}$ of $A$ vanishes if and only if $\Sigma$ is a surface of \emph{constant spacetime mean curvature} (in the lightcone). We note that such an \emph{STCMC surface} of the Minkwoski lightcone arises as an intersection with an affine, spacelike hyperplane, and thus these surfaces are precisely round spheres up to a suitable Lorentz transformation in the restriced Lorentz group $\operatorname{SO}^+(1,3)$, which are directly related to a conformal diffeomorphism in the M\"obius group. More generally, any spacelike cross section $\Sigma$ of the standard Minkowski lightcone can be realized as a graph over the round sphere $\Sbb^2$ with a conformally round metric, where the conformal factor $\omega$ simultaneously prescribes the graphical height on the lightcone.
	
	Here, we provide a quantitative statement of the aforementioned observation by the author similar to the work of De\,Lellis--M\"uller. More precisely, we show under some additional assumptions that for any spacelike cross section $\Sigma$ of the lightcone such that the trace-free part $\accentset{\circ}{A}$ of $A$ is sufficiently small in $L^2(\Sigma)$, the conformal factor $\omega$ of $\Sigma$ is close to the conformal factor of an appropiate STCMC surface of reference in $W^{2,2}(\Sbb^2)$, see Theorem \ref{thm_delellismueller}. The STCMC surface of reference is uniquely determined by an associated $4$-vector $\textbf{Z}$ in the ambient Minkowski spacetime that is timelike and future-pointing for any spacelike cross section of the standard Minkowski lightcone. This associated $4$-vector is closely related to a notion of center (of mass) in asymptotically hyperbolic initial data sets, cf. Cederbaum--Cortier--Sakovich \cite{cederbaumcortiersakovich}. Moreover, its spacial components closely resemble a notion of center of mass in conformal geometry, cf. \cite[Definition 2.8]{klainermanszeftel}. We show that $\textbf{Z}$ transforms equivariantly under Lorentz transformations, see Proposition \ref{prop_Lorentztransform}.
	
	As in the work of De\,Lellis--Mueller \cite{delellismueller}, the proof of the statement consists of two steps:
	In the first step, we establish a geometric, scaling invariant estimate of the form
	\begin{align}\label{intro_tracefreeA}
	\btr{\Sigma}\int_\Sigma \btr{A-\frac{\fint\mathcal{H}^2}{2}\gamma}^2\le C\btr{\Sigma} \int_\Sigma\newbtr{\accentset{\circ}{A}}^2
	\end{align}
	for any spacelike cross section $\Sigma$ of the standard lightcone, where $C>0$ is a uniform constant. We present two proofs of this desired estimate which both require the initial assumption $\mathcal{H}^2\ge 0$. The \emph{spacetime mean curvature} $\mathcal{H}^2=\tr_\Sigma A$ corresponds to the Lorentzian length $\eta(\vec{\mathcal{H}},\vec{\mathcal{H}})$ of the mean curvature vector $\vec{\mathcal{H}}$ of $\Sigma$ in the ambient Minkowski spacetime $(\R^{1,3},\eta)$, and can hence be (locally) negative in general. Moreover, by the Gauss equation one finds that $\mathcal{H}^2=2\operatorname{R}$, where $\operatorname{R}$ denotes the scalar curvature of $\Sigma$, so $\mathcal{H}^2$ completely determines the intrinsic geometry of the conformally round spacelike cross section. We would like to emphasize that we state estimate Estimate \eqref{intro_tracefreeA} with respect to the scalar second fundamental form $A$ and not with respect to the full, vector-valued second fundamental form $\vec{\two}$ of $\Sigma$, because $\accentset{\circ}{\vec{\two}}(X,Y)$ turns out to be zero or null for any two tangent vector fields $X,Y$, i.e.,
	\[
		\newnorm{\accentset{\circ}{\vec{\two}}}^2=0
	\]
	for any spacelike cross section $\Sigma$, although $\accentset{\circ}{\vec{\two}}\not=0$ in general. On the other hand, we have $\newbtr{\accentset{\circ}{A}}=0$ if and only if $\mathcal{H}^2=\operatorname{const.}$ as stated above, so we may think of $A$ as a convenient scalar representative of $\vec{\two}$ in the context of such geometric estimates. Nontheless $A$ encodes the same geometric information for spacelike cross sections of the standard Minkowski lightcone. Although $A$ is defined in a frame-independent way, it can in fact be realised as a (a-priori frame-dependent) null second fundamental form of $\Sigma$ upon a particular choice of gauge. We note that up to a factor of $2$ this gauge can be obtained by solving inverse mean curvature flow as an ODE along the lightcone, cf. Sauter \cite{sauter}, and that along any spacelike cross section $\Sigma$, the null generator in this particular gauge corresponds to the position vector of points on $\Sigma$. We note that this particular choice of gauge and its related null second fundamental form, i.e., what we call the scalar second fundamental form $A$ here, have been independently utilized by Le to prove several geometric (in-)equalities for conformally round surfaces, see \cite{le1, le2, le3}.
	
	The first proof of Estimate \eqref{intro_tracefreeA} we present here relies on \emph{null mean curvature flow}, see Proposition \ref{thm_tracefreeA1}. This parabolic flow for spacelike cross sections of a null hypersurface has first been studied by Roesch--Scheuer \cite{roeschscheuer} to detect marginally outer trapped surfaces (MOTS), and in a recent paper \cite{wolff1} by the author in the case of the standard Minkowski lightcone. As no MOTS exists in the Minkowski spacetime, the flow develops singularities in finite time. A full classification of all singularity models as STCMC surfaces has been obtained by the author in \cite{wolff1} by showing that null mean curvature flow on the standard lightcone is equivalent to $2d$-Ricci flow on topological spheres and drawing upon a classical result first proven by Hamilton \cite{hamilton1}, see also \cite{chow1, bartzstruweye, struwe, andrewsbryan}. Additionally, the author gave a new proof of this result  in \cite{wolff1} under Hamilton's initial restriction, which translates to the additional assumption of $\mathcal{H}^2>0$ in this context. 
	
	Rather surprisingly, the parabolic flow does not provide the optimal estimate, which is instead obtained by the corresponding elliptic equation and using the Bochner formula. This second proof is motivated by an almost-Schur lemma by De\,Lellis--Topping \cite{Delellistopping}, see Proposition \ref{thm_tracefreeA3}, and indeed provides the optimal constant (Theorem \ref{thm_tracefreeA4}). Moreover, as $\mathcal{H}^2=2\operatorname{R}$ this result can be interpreted as a generalization of the work of De\,Lellis--Topping to $2$ dimensions. In fact, this proof directly extends to higher dimensions where both estimates are equivalent, see Section \ref{sec_discussion} for a brief discussion. Both proofs show that equality is achieved if and only if both sides of the inequality vanish identically.
	
	Upon rescaling by the area radius, the left-hand side of \eqref{intro_tracefreeA} provides a uniform control of the difference $\mathcal{K}-1$ in $L^2$ of the rescaled surface, where $\mathcal{K}$ denotes its Gauss curvature. Adapting an estimate by Shi--Wang--Wu \cite{shiwangwu} to our setting, we obtain a $W^{2,2}(\Sbb^2)$-estimate on the difference $\omega-1$ under some additional a-priori estimate and a suitable balancing condition, see Proposition \ref{prop_w22}. The required balancing condition is intricately connected to the associated $4$-vector $\textbf{Z}$, and can always be achieved (Proposition \ref{prop_Lorentztransform}). Finally, reversing the rescaling and the applied Lorentz transformation yields the desired estimate with the appropriate STCMC surface of reference. As a consequence, we obtain a criteria for sequences of spacelike cross sections to posses a limiting subsequence upon rescaling that converges to an STCMC surface, see Theorem \ref{thm_delellismueller2}
	\newline\\
	This paper is structured as follows:\newline
	In Section \ref{sec_prelim} we give the necessary prelimaries, and introduce the associated $4$-vector $\textbf{Z}$ in Section \ref{sec_as4vector}. In Section \ref{sec_estimate}, we provide the two proofs of estimate \eqref{intro_tracefreeA}. We provide the necessary elliptic estimate in Section \ref{sec_estimate}, and proof the main result Theorem \ref{thm_delellismueller}, and Theorem \ref{thm_delellismueller2} in Section \ref{sec_main}. We close with some comments in Section \ref{sec_discussion}.
		
		\subsection*{Acknowledgements.}
		I would like to express my sincere gratitude towards my PhD supervisors Carla Cederbaum and Gerhard Huisken for their continuing guidance and helpful discussions. Further, I want to thank Annachiara Piubello for pointing me towards some extremely helpful references regarding the contents of Section \ref{sec_elliptic}, and Rodrigo Avalos for pointing me towards the almost-Schur lemma by De\,Lellis--Topping.
		\setcounter{section}{1}
	\section{Preliminaries}\label{sec_prelim}
	\subsection{General setup}\label{subsec_prelim_general}
	Throughout this paper, $\R^{1,3}$ will always refer to the $1+3$-dimensional Minkowski spacetime $(\R^{1,3},\eta)$, where $\R^4$ is equipped with the flat metric $\eta$ of signature $(-+++)$. In polar coordinates, $\R^{1,3}$ is given as $\R\times(0,\infty)\times\Sbb^2$ with
	\[
	\eta=-\d t^2+ \d r^2+r^2\d\Omega^2,
	\]
	where $\d\Omega^2$ denotes the standard round metric on $\Sbb^2$. The standard lightcone centered at the origin in the Minkowski spacetime is given as the set
	\[
	C(0):=\{\btr{t}=r\},
	\]
	and we denote the components $C(0)_+:=C(0)\cap\{t\ge 0\}$, $C(0)_-=C(0)\cap\{t\le 0\}$ as the future-pointing and past-pointing standard lightcone (centered at the origin and with time-orientation induced by $\partial_t$), respectively.
	
	In the following, $(\Sigma,\gamma)$ will always denote a $2$-surface with Riemanian metric $\gamma$, and we are in particular interested in such surfaces arising as closed, orientable, spacelike codimension-$2$ surfaces in $\R^{1,3}$ (usually restricted to the lightcone). As usual, we define the vector valued second fundamental form $\vec\two$ of $\Sigma$ in $\R^{1,3}$ as 
	\begin{align*}
	\vec\two(V,W)=\left(\overline{\nabla}_VW\right)^\perp,
	\end{align*}
	where $\overline{\nabla}$ denotes the Levi-Civita connection on the Minkowski spacetime. The codimension-$2$ mean curvature vector of $\Sigma$ is then defined as $\vec{\mathcal{H}}=\tr_\gamma \vec\two$, where $\tr_\gamma$ denotes the metric trace on $\Sigma$ with respect to $\gamma$, and we define the spacetime mean curvature $\mathcal{H}^2$ of $\Sigma$ as the Lorentzain length of $\vec{\mathcal{H}}$, i.e., $\mathcal{H}^2:=\eta(\vec{\mathcal{H}},\vec{\mathcal{H}})$. Let $\{\ul{L},L\}$ be a null frame of $\Gamma(T^\perp\Sigma))$, i.e., $\eta(\ul{L},\ul{L})=\eta(L,L)=0$, with $\eta(\ul{L},L)=2$. Then $\vec{\two}$ and $\vec{\mathcal{H}}$ admit the decomposition 
	\begin{align}
	\begin{split}
		\vec{\two}&=-\frac{1}{2}\chi\ul{L}-\frac{1}{2}\ul{\chi}L,\\
		\vec{\mathcal{H}}&=-\frac{1}{2}\theta\ul{L}-\frac{1}{2}\ul{\theta}L.
	\end{split}
	\end{align}
	Here, the null fundamental forms $\ul{\chi}$ and $\chi$ with respect to $\ul{L}$ and $L$, respectively, are defined as
	\begin{align*}
	\ul{\chi}(V,W)&\definedas \spann{\overline{\nabla}_V\ul{L},W}=-\spann{\overline{\nabla}_VW,\ul{L}},\\
	{\chi}(V,W)&\definedas \spann{\overline{\nabla}_V{L},W}=-\spann{\overline{\nabla}_VW,L},
	\end{align*}
	for all tangent vector fields $V,W\in \Gamma(T\Sigma)$, and the null expansions $\ul{\theta}\definedas \tr_\gamma\ul{\chi}$ and $\theta\definedas \tr_\gamma{\chi}$ with respect to $\ul{L}$ and $L$, respectively.
	In particular
	\begin{align}
	\begin{split}\label{eq_secondffnulldecomp}
	\newbtr{\vec\two}^2&=\spann{\chi,\ul{\chi}},\\
	\mathcal{H}^2&=\ul{\theta}\theta,
	\end{split}
	\end{align}
	and if $\mathcal{H}^2$ is constant along $\Sigma$, i.e., $\vec{\mathcal{H}}$ has constant Lorentzian length along $\Sigma$, we call $\Sigma$ a surface of constant spacetime mean curvature (STCMC), cf. Cederbaum--Sakovich \cite{cederbaumsakovich}.
	Finally, we define the connection one-form $\zeta$ as
	\[
	\zeta(V)\definedas\frac{1}{2}\spann{\overline{\nabla}_V\ul{L},L}.
	\]
	
	We denote the Riemann curvature tensor, Ricci curvature and scalar curvature on  $(\Sigma,\gamma)$ by $\Rm$, $\Ric$, $\operatorname{R}$, respectively, where we use the following conventions:
	\begin{align*}
	\Rm(X,Y,W,Z)&=\spann{\nabla_X\nabla_YZ-\nabla_Y\nabla_XZ-\nabla_{[X,Y]}Z,W},\\
	\Ric(V,W)&=\tr_\gamma \Rm(V,\cdot,W,\cdot),\\
	\operatorname{R}&=\tr_\gamma\Ric.
	\end{align*}
	We will use $\nabla$ for the Levi--Civita connection on $(\Sigma,\gamma)$ and $\nabla^k$ for the $k$-th tensor derivative. We use $\btr{\cdot}:=\btr{\cdot}_{\gamma}$ for all respective tensor norms induced by $\gamma$, where we will usually omit $\gamma$ for convenience when it is clear from the context. By slight abuse of notation, we will denote the gradient of a function $f$ on $(\Sigma,\gamma)$ by $\nabla f$. We denote the tracefree part of a $(0,2)$-tensor $T$ as $\accentset{\circ}{T}:=T-\tr_\gamma T\gamma$.
	
	We briefly recall the well-known Gauß and Codazzi Equations in the case of a codimension-$2$ surface in $\R^{1,3}$. For proofs, we refer to \cite[Section 2]{wolff1}.
	\begin{prop}[Gauß Equations]\label{prop_nullgauß}
		\begin{align*}
		\Rm_{ijkl}&=\frac{1}{2}\chi_{jl}\ul{\chi}_{ik}+\frac{1}{2}\ul{\chi}_{jl}\chi_{ik}-\frac{1}{2}\chi_{jk}\ul{\chi}_{il}-\frac{1}{2}\ul{\chi}_{jk}\chi_{il},\\
		\Ric_{ik}&=\frac{1}{2}\theta\ul{\chi}_{ik}+\frac{1}{2}\ul{\theta}\chi_{ik}-\frac{1}{2}(\chi\cdot\ul{\chi})_{ik}-\frac{1}{2}(\ul{\chi}\cdot\chi)_{ik},\\
		\operatorname{R}&=\mathcal{H}^2-\newbtr{\vec{\two}}^2.
		\end{align*}
	\end{prop}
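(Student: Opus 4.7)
The plan is to establish the three formulas by specializing the general submanifold Gauss equation to the codimension-$2$ setting in the flat Minkowski background and then tracing twice.

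First I would invoke the general Gauss equation for a (pseudo-)Riemannian submanifold. Since the ambient $\R^{1,3}$ has vanishing Riemann tensor, it reduces to
\[
\Rm(X,Y,Z,W)=\eta\bigl(\vec{\two}(X,W),\vec{\two}(Y,Z)\bigr)-\eta\bigl(\vec{\two}(X,Z),\vec{\two}(Y,W)\bigr),
\]
where the normal-bundle pairing is the restriction of $\eta$. The only data to insert is the null decomposition $\vec{\two}=-\tfrac12\chi\,\ul L-\tfrac12\ul\chi\,L$.

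Second, using the null-frame normalizations $\eta(\ul L,\ul L)=\eta(L,L)=0$ and $\eta(\ul L,L)=2$, the cross terms survive and the diagonal ones vanish, giving
\[
\eta\bigl(\vec{\two}(X,W),\vec{\two}(Y,Z)\bigr)=\tfrac12\bigl[\chi(X,W)\ul\chi(Y,Z)+\ul\chi(X,W)\chi(Y,Z)\bigr].
\]
Applying this to both terms in the Gauss equation and translating into indices yields exactly
\[
\Rm_{ijkl}=\tfrac12\chi_{jl}\ul\chi_{ik}+\tfrac12\ul\chi_{jl}\chi_{ik}-\tfrac12\chi_{jk}\ul\chi_{il}-\tfrac12\ul\chi_{jk}\chi_{il}.
\]

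Third, for the Ricci identity I would contract with $\gamma^{jl}$. The first two terms contract trivially using $\tr_\gamma\chi=\theta$ and $\tr_\gamma\ul\chi=\ul\theta$, producing $\tfrac12\theta\,\ul\chi_{ik}+\tfrac12\ul\theta\,\chi_{ik}$. The last two terms contract into the symmetric tensor products $(\chi\cdot\ul\chi)_{ik}=\gamma^{jl}\chi_{ij}\ul\chi_{lk}$ and $(\ul\chi\cdot\chi)_{ik}$, giving the claimed formula for $\Ric_{ik}$. Finally I would contract once more with $\gamma^{ik}$: the first two Ricci terms each contribute $\tfrac12\theta\ul\theta$, which by \eqref{eq_secondffnulldecomp} equals $\tfrac12\mathcal H^2$; the two mixed products each contribute $\langle\chi,\ul\chi\rangle$, which again by \eqref{eq_secondffnulldecomp} equals $\newbtr{\vec\two}^2$. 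Summing gives $\scal=\mathcal H^2-\newbtr{\vec\two}^2$.

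There is no real obstacle here beyond bookkeeping with the null-frame contractions; the only point that requires a little care is keeping track of the factor of $2$ coming from $\eta(\ul L,L)=2$ cancelling the $(-\tfrac12)^2$ in the decomposition of $\vec\two$, and correctly identifying the two distinct $\chi\ul\chi$ contractions in the Ricci formula with the notation $\chi\cdot\ul\chi$ and $\ul\chi\cdot\chi$.
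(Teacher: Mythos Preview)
Your proposal is correct and follows the standard derivation: start from the submanifold Gauss equation in the flat ambient, insert the null decomposition of $\vec\two$, use $\eta(\ul L,L)=2$ to evaluate the normal pairings, and then trace twice. The paper does not give its own proof here but refers to \cite[Section 2]{wolff1}, where exactly this computation is carried out; your argument matches that approach.
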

	\begin{prop}[Codazzi Equations]\label{prop_nullcodazzi}
		\begin{align*}
		\nabla_i\ul{\chi}_{jk}-\nabla_j\ul{\chi}_{ik}
		&=-\zeta_j\ul{\chi}_{ik}+\zeta_i\ul{\chi}_{jk},\\
		\nabla_i\chi_{jk}-\nabla_j\chi_{ik}&=+\zeta_j\chi_{ik}-\zeta_i\chi_{jk}.
		\end{align*}
	\end{prop}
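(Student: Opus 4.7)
The plan is to prove both identities in parallel by unpacking the definitions of $\ul{\chi}$ and $\chi$ directly, and exploiting the vanishing of the ambient Riemann tensor on Minkowski space. I would fix an arbitrary point $p\in\Sigma$ and work in a local orthonormal frame $\{e_1,e_2\}$ on $\Sigma$ chosen so that $\nabla_{e_i}e_j|_p=0$ (surface-normal coordinates), so that $\nabla_i \ul{\chi}_{jk}|_p = e_i(\ul{\chi}(e_j,e_k))|_p$, $\overline{\nabla}_{e_i}e_k|_p = \vec{\two}(e_i,e_k)$ by the Gauss formula, and $[e_i,e_j]|_p=0$.

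Differentiating the definition $\ul{\chi}(e_j,e_k)=\spann{\overline{\nabla}_{e_j}\ul{L},e_k}$ and evaluating at $p$ produces two pieces,
\[
\nabla_i\ul{\chi}_{jk} \;=\; \spann{\overline{\nabla}_{e_i}\overline{\nabla}_{e_j}\ul{L},\,e_k} + \spann{\overline{\nabla}_{e_j}\ul{L},\,\vec{\two}(e_i,e_k)}.
\]
Antisymmetrizing in $i,j$ turns the first piece into $\spann{\overline{R}(e_i,e_j)\ul{L} + \overline{\nabla}_{[e_i,e_j]}\ul{L},\,e_k}$, both of whose terms vanish since $\R^{1,3}$ is flat and $[e_i,e_j]|_p=0$. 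The entire identity thus reduces to computing the second piece. For this, I would decompose $\overline{\nabla}_{e_j}\ul{L}$ in the ambient frame $\{e_1,e_2,\ul{L},L\}$: its tangential part is $\ul{\chi}_j{}^l e_l$ by the very definition of $\ul{\chi}$, and in the normal part the $L$-coefficient is zero because $\spann{\overline{\nabla}_{e_j}\ul{L},\ul{L}} = \tfrac{1}{2}e_j\spann{\ul{L},\ul{L}}=0$, while the $\ul{L}$-coefficient is $\zeta_j$ by reading off $\spann{\overline{\nabla}_{e_j}\ul{L},L}=2\zeta_j$ against $\spann{\ul{L},L}=2$. Pairing this decomposition with $\vec{\two}(e_i,e_k)=-\tfrac{1}{2}\chi_{ik}\ul{L}-\tfrac{1}{2}\ul{\chi}_{ik}L$ and using the null inner products leaves a single surviving contribution equal to $-\zeta_j\ul{\chi}_{ik}$, and antisymmetrizing yields the first claim.

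The identity for $\chi$ proceeds identically after substituting $L$ for $\ul{L}$; the opposite sign in front of $\zeta$ comes from the one-line computation $\spann{\overline{\nabla}_{e_j}L,\ul{L}} = e_j\spann{L,\ul{L}} - \spann{L,\overline{\nabla}_{e_j}\ul{L}} = -2\zeta_j$, so the $L$-component of the normal part of $\overline{\nabla}_{e_j}L$ carries a sign opposite to what appeared for $\ul{L}$. I do not expect any genuine obstacle: the argument is purely algebraic once flatness of $\R^{1,3}$ and the Gauss formula are invoked. The only place requiring care is bookkeeping the null-frame decompositions of $\overline{\nabla}_{e_j}\ul{L}$ and $\overline{\nabla}_{e_j}L$ together with the normalization $\spann{\ul{L},L}=2$, and keeping track of the sign convention in the definition of $\zeta$, which is responsible for the apparent asymmetry between the two displayed equations in Proposition~\ref{prop_nullcodazzi}.
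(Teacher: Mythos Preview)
Your argument is correct. The paper itself does not supply a proof of Proposition~\ref{prop_nullcodazzi}; it simply states the identities and refers to \cite[Section 2]{wolff1}. Your direct computation---differentiating the defining expression for $\ul{\chi}$ (resp.\ $\chi$), antisymmetrizing, killing the curvature term by flatness of $\R^{1,3}$, and then reading off the surviving contribution from the normal decomposition of $\overline{\nabla}_{e_j}\ul{L}$ (resp.\ $\overline{\nabla}_{e_j}L$)---is the standard derivation of the Codazzi equations in a null frame and is almost certainly what the cited reference contains. The bookkeeping you flag (the normalization $\spann{\ul{L},L}=2$ and the sign in the definition of $\zeta$) is handled correctly in your outline.
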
	
	
	\subsection{Geometry on the standard Minkowski lightcone}\label{subsec_prelim_nullgeom}
		We briefly discuss the properties of the standard lightcone in the $3+1$-Minkwoski spacetime. For more details and helpful references, we refer the interested reader to \cite[Section 3]{wolff1}. To discuss the lightcone and its properties, it is most convenient to introduce the null coordinates $v\definedas r+t$, $u\definedas r-t$ on $\R^{1,3}$, as the Minkwoski metric $\eta$ can be written as
		\[
			\eta=\frac{1}{2}\left(\d u\d v+\d v\d u\right)+r^2\d\Omega^2
		\]
		with $r=r(u,v)=\frac{u+v}{2}$, and the future-pointing lightcone $\mathcal{N}=C(0)_+$ is given as the set $\{u=0\}$. Note that $\mathcal{N}$ has the induced degenerate metric 
		\[
			r^2\d\Omega^2,
		\]
		and is generated by the geodesic integral curves of $\ul{L}:=2\partial_v$. Note that $\ul{L}$ is future-pointing. Recall that the null generator $\ul{L}$ of a null hypersurface is both tangential and normal to $\mathcal{N}$, and by choice of $\ul{L}$ we have $\ul{L}(r)=1$. Thus, $r$ restricts to an affine parameter along $\mathcal{N}$. In particular, we can represent any spacelike cross section $\Sigma$ of $\mathcal{N}$ (which intersects any integral curve of $\ul{L}$ exactly once) as a graph over $\Sbb^2$, i.e., $\Sigma=\Sigma_\omega=\{\omega=r\}\subseteq\mathcal{N}$. In particular, $\Sigma$ has the induced metric
		\[
			\gamma=\omega^2\d\Omega^2,
		\]
		so $(\Sigma,\gamma)$ is conformally round. Conversely, for any conformally round metric $\gamma_\omega=\omega^2\d\Omega^2$ there exists a unique spacelike cross section $\Sigma_\omega$ such that $(\Sigma_\omega,\gamma_\omega)$ embeds into $\mathcal{N}$, where we will omit the subscript $\omega$ in the following when it is clear from the context. This observation is similar to an idea developed by Fefferman--Graham \cite{feffermangraham}, and their construction indeed yields the standard lightcone in the $1+3$-Minkowski spacetime in the case of the round $2$-sphere.
		
		We now represent the extrinsic curvature of a spacelike cross section $(\Sigma,\gamma)$ of $\mathcal{N}$ as a codimension-$2$ surface with respect to a particular null frame. Recall that the null generator $\ul{L}$ is both tangent and normal to $\mathcal{N}$, in particular $\ul{L}$ is normal to any spacelike cross section $(\Sigma,\gamma)$. We further consider a normal null vector field $L$ along $\Sigma$ such that $\eta(\ul{L},L)=2$. This uniquely determines $L$ such that $\{\ul{L},L\}$ form a frame of the normal bundle $T^\perp\Sigma$ of $\Sigma$. Note that $L$ is future-pointing.
		We then find the following, cf. \cite[Proposition 5]{wolff1}:
		\begin{prop}\label{prop_minkowskilightcone}
			For any spacelike cross section $(\Sigma,\gamma)$ in $\mathcal{N}$, we find
			\begin{enumerate}
				\item[\emph{(i)}] $\gamma=\omega^2\d\Omega^2$,\,\,\, $\ul{\chi}=\frac{1}{\omega}\gamma$,\,\,\, $\ul{\theta}=\frac{2}{\omega}$,
				\item[\emph{(ii)}] $\chi=\frac{1}{\omega}(1+\btr{\nabla\omega}^2)\gamma-2\Hess\, \omega$
				\item[\emph{(iii)}] $\theta=2\left(\frac{1}{\omega}+\frac{\btr{\nabla\omega}^2}{\omega}-\Delta \omega\right)$,
				\item[\emph{(iv)}] $\zeta=-\frac{\d\omega}{\omega}$,
			\end{enumerate}
			where $\Hess$ and $\Delta$ denote the Hessian and Laplace--Beltrami operator on $(\Sigma,\gamma)$, respectively.
		\end{prop}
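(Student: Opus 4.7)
The plan is to compute each of the four quantities by direct calculation after fixing an explicit parametrization of $\Sigma_\omega$. In polar coordinates $(t, r, \theta, \phi)$ on $\R^{1,3}$, I realize $\Sigma_\omega$ as the image of $p \mapsto (\omega(p), \omega(p), p)$ over $\Sbb^2$, so the coordinate tangent fields along $\Sigma_\omega$ are $e_A = \partial_A + (\partial_A\omega)\ul{L}$, where $\ul{L} = 2\partial_v = \partial_t + \partial_r$ is the null generator. Since $\ul{L}$ is null and orthogonal to every angular vector $\partial_A$, part (i) on the induced metric is immediate: $\gamma_{AB} = \eta(e_A, e_B) = \eta(\partial_A, \partial_B) = \omega^2 (\d\Omega^2)_{AB}$.

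For $\ul{\chi}$ and $\ul{\theta}$, I would use the standard Minkowski Christoffel symbols in polar coordinates to obtain $\overline{\nabla}_{\partial_A}\ul{L} = \tfrac{1}{r}\partial_A$, while the geodesic property of the null generator gives $\overline{\nabla}_{\ul{L}}\ul{L} = 0$. Together these imply $\overline{\nabla}_{e_A}\ul{L} = \tfrac{1}{\omega}\partial_A$ along $\Sigma_\omega$, hence $\ul{\chi}_{AB} = \eta(\overline{\nabla}_{e_A}\ul{L}, e_B) = \tfrac{1}{\omega}\gamma_{AB}$, and the formula $\ul{\theta} = \tfrac{2}{\omega}$ follows by tracing, completing (i).

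To address (ii)--(iv) I would first pin down $L$ explicitly. Making the ansatz $L = a\ul{L} + b(\partial_t - \partial_r) + c^A\partial_A$ and successively imposing orthogonality to $\Sigma_\omega$, the normalization $\eta(\ul{L}, L) = 2$, and the null condition $\eta(L,L)=0$, I obtain $b = -1$, $c^A = -2(\nabla\omega)^A$ (with $\nabla$ the gradient on $(\Sigma_\omega, \gamma)$), and $a = -\btr{\nabla\omega}^2$. With $L$ explicit, I would expand $\overline{\nabla}_{e_A}e_B$ via the Leibniz rule and the polar Christoffels, and then compute $\chi_{AB} = -\eta(\overline{\nabla}_{e_A}e_B, L)$ using the inner products $\eta(\partial_C, L) = -2\partial_C\omega$ and $\eta(\ul{L}, L) = 2$. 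This produces the round-sphere Hessian of $\omega$ together with various cross terms in $\nabla\omega$.

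The main obstacle, and the principal bookkeeping step, is converting this round-sphere Hessian into the intrinsic Hessian on $(\Sigma_\omega, \gamma)$ via the standard conformal transformation formula with conformal factor $\ln\omega$; after this substitution the derivative-squared cross terms cancel exactly, and (ii) emerges in the stated form. Part (iii) then follows by tracing (ii) with $\gamma^{AB}$ and identifying $\Delta\omega = \tr_\gamma \Hess\,\omega$, while (iv) reduces to the one-line computation $\zeta(e_A) = \tfrac{1}{2}\eta(\overline{\nabla}_{e_A}\ul{L}, L) = \tfrac{1}{2\omega}\eta(\partial_A, L) = -\tfrac{\partial_A\omega}{\omega}$, yielding $\zeta = -\tfrac{\d\omega}{\omega}$.
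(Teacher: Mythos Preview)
Your computation is correct and complete: the parametrization, the identification of $L$, the evaluation of $\ul{\chi}$, $\chi$, $\theta$, $\ul{\theta}$, and $\zeta$, and in particular the conformal conversion of $\Hess_{\Sbb^2}\omega$ to $\Hess_\gamma\omega$ all go through exactly as you describe, with the cross terms in $\partial_A\omega\,\partial_B\omega$ cancelling to yield (ii).

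There is nothing to compare against in the paper itself, however: the paper does not prove this proposition but simply cites it from the author's earlier work \cite{wolff1} (Proposition~5 there). Your argument is precisely the kind of direct coordinate computation one would expect that reference to contain, so in effect you have reconstructed the omitted proof rather than found an alternative route.
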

		As stated in \cite[Remark 1]{wolff1}, the fact that $\accentset{\circ}{\ul{\chi}}=0$ for all spacelike cross sections yields that
		\begin{align}\label{eq_gaußcurvature}
		R=\frac{1}{2}\mathcal{H}^2
		\end{align}
		by the twice contracted Gauß equation Proposition \ref{prop_nullgauß}. We would like to emphasize that $\mathcal{H}^2$ refers by definition to the signed Lorentzian length of the mean curvature tensor and can therefore be (locally) negative despite the suggestive power of $2$ as an exponent. 
		
		Moreover, we find that
		\begin{align}
			\accentset{\circ}{\vec{\two}}=-\frac{1}{2}\accentset{\circ}{\chi}\ul{L},
		\end{align}
		so $\newbtr{\accentset{\circ}{\vec{\two}}}^2=0$ although $\accentset{\circ}{\vec{\two}}\not=0$ in general. Instead, we define a tensor-representative on $\Sigma$ that encodes the same extrinsic geometric information of the spacelike cross section. As in \cite{wolff1}, we define the \emph{scalar second fundamental form} $A$ as
		\begin{align}\label{eq_scalarA}
			A:=\ul{\theta}\chi,
		\end{align}
		and note that $\tr_\Sigma A=\mathcal{H}^2$. Then $A$ has the following properties, cf. \cite[Proposition 7]{wolff1}:
		\begin{prop}\label{prop_codazziminkowski2}
			Let $(\Sigma,\gamma)$ be a spacelike cross section of $\mathcal{N}$. Then $\nabla A$ is totally symmetric, i.e.,
			\begin{align}
			\nabla_iA_{jk}=\nabla_jA_{ik}.
			\end{align}
			In particular, we find
			\begin{align}\label{eq_Agradientestimate}
			\btr{\nabla A}^2\ge \frac{3}{4}\btr{\nabla\mathcal{H}^2}^2,
			\end{align}
			and $\accentset{\circ}{A}=0$ if and only if $\mathcal{H}^2$ is a strictly positive constant.
		\end{prop}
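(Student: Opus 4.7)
My plan has three separate parts corresponding to the three assertions.

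For the total symmetry $\nabla_i A_{jk} = \nabla_j A_{ik}$, I would compute directly. Writing $A = \ul{\theta}\chi$, we get $\nabla_i A_{jk} - \nabla_j A_{ik} = (\nabla_i\ul{\theta})\chi_{jk} - (\nabla_j\ul{\theta})\chi_{ik} + \ul{\theta}(\nabla_i\chi_{jk} - \nabla_j\chi_{ik})$. Combining $\ul{\theta} = 2/\omega$ and $\zeta = -\d\omega/\omega$ from Proposition \ref{prop_minkowskilightcone} gives $\nabla_i \ul{\theta} = \ul{\theta}\zeta_i$, and then the Codazzi identity for $\chi$ from Proposition \ref{prop_nullcodazzi} produces exactly the cancelling term. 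Everything cancels term by term.

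For the gradient estimate, I would exploit that a totally symmetric $(0,3)$-tensor $T$ on a $2$-surface admits the orthogonal decomposition
\[
T_{ijk} = T^\circ_{ijk} + \frac{1}{4}\bigl(\gamma_{ij}b_k + \gamma_{ik}b_j + \gamma_{jk}b_i\bigr), \qquad b_i \definedas \gamma^{jk}T_{ijk},
\]
with $T^\circ$ totally trace-free (the factor $\tfrac{1}{4}$ fixed by tracing: $\gamma^{jk}(\gamma_{ij}b_k + \gamma_{ik}b_j + \gamma_{jk}b_i) = 4b_i$ in $n = 2$). A short calculation in an orthonormal frame gives $|{\tfrac{1}{4}(\gamma_{ij}b_k + \gamma_{ik}b_j + \gamma_{jk}b_i)}|^2 = \tfrac{3}{4}|b|^2$. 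Applied to $T = \nabla A$, the symmetry from step one identifies $b_i = \gamma^{jk}\nabla_i A_{jk} = \nabla_i\mathcal{H}^2$, and dropping $|T^\circ|^2 \ge 0$ yields exactly \eqref{eq_Agradientestimate}.

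For the equivalence $\accentset{\circ}{A} \equiv 0 \Leftrightarrow \mathcal{H}^2 = \mathrm{const.} > 0$, the forward implication is immediate from step one: if $A = \tfrac{1}{2}\mathcal{H}^2\gamma$ then total symmetry of $\nabla A$ gives $\gamma_{jk}\nabla_i\mathcal{H}^2 = \gamma_{ik}\nabla_j\mathcal{H}^2$, and contracting $j,k$ in $n=2$ forces $\nabla\mathcal{H}^2 \equiv 0$; strict positivity then follows from Gauss--Bonnet applied to the topological sphere $\Sigma$ together with \eqref{eq_gaußcurvature}. For the converse, I would use Proposition \ref{prop_minkowskilightcone} to compute $\accentset{\circ}{\chi} = -2\accentset{\circ}{\Hess}\,\omega$, so that $\accentset{\circ}{A} = -\tfrac{4}{\omega}\accentset{\circ}{\Hess}\,\omega$. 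Since $\mathcal{H}^2 = 2\mathrm{R}$ is a positive constant, $(\Sigma,\gamma)$ has constant positive Gauss curvature, hence is globally isometric to a round sphere; the admissible conformal factors $\omega$ realizing the round metric $\gamma = \omega^2 \d\Omega^2$ are precisely the conformal Killing potentials of $\mathbb{S}^2$ (equivalently, the restrictions to $\mathbb{S}^2$ of affine-linear functions on $\R^{1,3}$, which correspond geometrically to intersections of the lightcone with affine spacelike hyperplanes), and these are exactly the functions with vanishing tracefree Hessian.

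I expect the main obstacle to be this reverse direction of the equivalence: one needs to identify the null-space of the operator $\accentset{\circ}{\Hess}_\gamma$ on the round sphere and verify that it matches, via the conformal representation, the set of $\omega$ making $\gamma_\omega$ a constant-curvature metric. This can either be done by a direct computation using the conformal transformation law for the Hessian, or by invoking the known classification of STCMC cross sections of $\mathcal{N}$ as round spheres in spacelike affine hyperplanes (cf.\ the discussion in the introduction and \cite{wolff1}).
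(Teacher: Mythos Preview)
Your proposal is correct in all three parts. Note that the paper does not actually supply a proof of this proposition; it cites \cite[Proposition 7]{wolff1}, and only Remark \ref{bem_codazzi} touches on the last equivalence. So there is no line-by-line comparison to make, but a few comments are in order.

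Your arguments for the total symmetry and for the gradient estimate are the standard ones and are exactly what one expects: the product-rule computation together with $\nabla_i\ul{\theta}=\ul{\theta}\zeta_i$ and the Codazzi equation for $\chi$ gives the symmetry, and the orthogonal decomposition of a totally symmetric $(0,3)$-tensor into its trace part and trace-free part gives the inequality with the correct constant $\tfrac34$ in dimension $2$.

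For the equivalence, your forward direction is fine. For the converse, the paper's Remark \ref{bem_codazzi} takes a slightly different (and arguably more direct) route than yours: rather than first arguing that $(\Sigma,\gamma)$ is globally round and then identifying admissible conformal factors, it rewrites $\accentset{\circ}{A}=4\omega\,\accentset{\circ}{\Hess}_{\Sbb^2}(1/\omega)$ with the Hessian taken on the \emph{fixed} round sphere, so that one only needs the classical fact that the kernel of $\accentset{\circ}{\Hess}_{\Sbb^2}$ consists of restrictions of affine-linear functions, i.e.\ $1/\omega=c+\vec{a}\cdot\vec{x}$, and then matches this with the constant-curvature condition. This sidesteps the extra isometry/identification step you flagged. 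A still cleaner alternative, which avoids any explicit formula: from $\nabla_iA_{jk}=\nabla_jA_{ik}$ and $\mathcal{H}^2$ constant one gets $\operatorname{div}\accentset{\circ}{A}=0$; a symmetric, trace-free, divergence-free $(0,2)$-tensor on a surface is a holomorphic quadratic differential, and on $\Sbb^2$ there are none except zero.
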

		We also refer to Al\'ias--C\'anovas--Rigoli \cite[Section 4]{aliascanovasrigoli} for an independent proof of the last equivalence in terms of the Weingarten endomorphism.
		\begin{bem}\label{bem_codazzi}
			Note that
			\begin{align}
				\accentset{\circ}{A}=-\frac{4}{\omega}\accentset{\circ}{\Hess}_\gamma\omega=4\omega\accentset{\circ}{\Hess}_{\Sbb^2}\left(\frac{1}{\omega}\right),
			\end{align}
			where ${\Hess}_{\Sbb^2}$ denotes the Hessian on $(\Sbb^2,\d\Omega^2)$. Using this, one can also verify by computation in coordinates that $\accentset{\circ}{\Hess}_{\Sbb^2}\left(\frac{1}{\omega}\right)=0$ if and only if $\omega$ is of the form
			\begin{align}\label{eq_metricconstantscalar}
				\omega(\vec{x})=\frac{\rho}{\sqrt{1+\norm{\vec{a}}^2}+\vec{a}\cdot\vec{x}}
			\end{align}
			for $\vec{x}\in\Sbb^2$ and a fixed vector $\vec{a}\in\R^3$, which are exactly the metrics of constant scalar curvature on $\Sbb^2$. It is a well-known fact that all such metrics can be obtained from the round metric by a suitable M\"obius transformation, cf. \cite[Proposition 6]{marssoria}, \cite[Section 4]{chenwang}. Moreover, the metrics \eqref{eq_metricconstantscalar} describe exactly the images of round spheres after a suitable Lorentz transformation in $\operatorname{SO}^+(1,3)$ in the Minkowski spacetime, which leave the lightcone $\mathcal{N}$ invariant. These observations illustrate once again the well-known fact that the M\"obius group is isomorphic to the restricted Lorentz group $\operatorname{SO}^+(1,3)$, which we will discuss in more detail in the next Subsection.
		\end{bem}
		Note that $A$ can also be interpreted as the null second fundamental form $\chi$ in a specific choice of gauge. In this gauge, the null generator $\widetilde{\ul{L}}$ along a spacelike cross section $\Sigma=\Sigma_\omega$ is given by
		\[
			\widetilde{\ul{L}}=\omega\partial_v,
		\]
		which is precisely $\frac{1}{2}$ times the position vector of points in $\Sigma$ in $C(0)_+$ with respect to the origin. Up to this factor of $2$, this particular choice of gauge and the conformally round structure of cross sections of the standard lightcone have been independently utilized by Le in a recent proof of an effective uniformization theorem, see \cite{le3}. See also \cite{le1, le2}.
	\subsection{The restricted Lorentz group}\label{subsec_prelim_lorentz}
	We briefly recall some well-known facts about the linear isometries of the Minkowski spacetime, see e.g. \cite{misnerthornewheeler}. The Lorentz group $\operatorname{O}(1,3)$ is the group of matrizes $L$ in $\R^{4\times 4}$ such that
	\[
	L^T\eta L=\eta
	\]
	for $\eta$ in Cartesian coordinates, i.e., the group of linear isometries of $(\R^{1,3},\eta)$. The full Lorentz group is a $6$-dimensional non-compact non-connected Lie group, but for the purpose of this work we will restrict our attention to the \emph{restricted Lorentz group} $\operatorname{SO}^+(1,3)$, which denotes the identity component of $\operatorname{O}(1,3)$. Note that $\operatorname{SO}^+(1,3)$ consists of all rotations and boosts that preserve the time-orientation (with respect to $\partial_t$), see Proposition \ref{prop_decomplorentz} below. In particular, $\Lambda(C(0)_\pm)=C(0)_\pm$ for all Lorentz transformations $\Lambda\in \operatorname{SO}^+(1,3)$.
	
	Here, we identify a rotation of the spatial coordinates $D$ in $\operatorname{SO}^+(1,3)$ with the matrix
	\[
	D=\begin{pmatrix}
	1&0\\
	0&R
	\end{pmatrix},
	\]
	where $R\in \operatorname{SO}(3)$ is a rotation in $\R^3$. For $\vec{a}\in\R^3\setminus{\{0\}}$, we define the rotation $D_{\vec{a}}$ to be the rotation that maps $\partial_3$ to $\frac{\vec{a}}{\btr{\vec{a}}}$ wihtout any additional rotation in the plane perpendicular to $\vec{a}$ in $\R^3$. Note that this uniquely determines $D_{\vec{a}}$.
		
	 A \emph{special Lorentz boost} $\Lambda_a$ is given by the matrix
	\[
	\Lambda_a=\begin{pmatrix}
	b&0&0&a\\
	0&1&0&0\\
	0&0&1&0\\
	a&0&0&b
	\end{pmatrix}
	\]
	for $a\in\mathbb{R}$ and $b:=\sqrt{1+a^2}$. More generally, we may consider a \emph{Lorentz boost} $\Lambda_{\vec{a}}$ in direction of $\vec{a}\in\R^3$. We may take the following decomposition as a definition of $\Lambda_{\vec{a}}$:
	\[
		\Lambda_{\vec{a}}:=
		\begin{cases}
			\operatorname{Id}_{\R^{1,3}}&\text{if }\vec{a}=0,\\
			D_{\vec{a}}\circ \Lambda_{\btr{\vec{a}}}\circ D^{-1}_{\vec{a}}&\text{if }\vec{a}\not=0.
		\end{cases}
	\]
	Note that
	\begin{align}\label{eq_lorentzboost}
	\Lambda_{\vec{a}}(\partial_t)=\left(\sqrt{1+\newbtr{\vec{a}}^2},\vec{a}\right).
	\end{align}
	
	For a general Lorentz transformation $\Lambda\in \operatorname{SO}^+(1,3)$, we recall the following well-known decomposition and include a brief proof for the sake of completeness.
	\begin{prop}\label{prop_decomplorentz}
		Let $\Lambda\in \operatorname{SO}^+(1,3)$. Then there exists $\vec{a}\in\R^3$ and a rotation $D$ defined as above such that
		\[
		\Lambda=\Lambda_{\vec{a}}\circ D.
		\]
	\end{prop}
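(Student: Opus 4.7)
The plan is to read off the boost from where $\Lambda$ sends $\partial_t$, and then show that the remainder fixes $\partial_t$ and hence reduces to a spatial rotation.

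First I would analyze the image $\Lambda(\partial_t)$. Since $\Lambda$ is a linear isometry of $(\R^{1,3},\eta)$ that lies in the identity component, it preserves the unit future hyperboloid $\{\eta(v,v)=-1,\ v^0>0\}$. Hence $\Lambda(\partial_t)$ is a future-pointing unit timelike vector and must take the form
\[
\Lambda(\partial_t)=\left(\sqrt{1+\newbtr{\vec{a}}^2},\vec{a}\right)
\]
for a unique $\vec{a}\in\R^3$. By the explicit formula \eqref{eq_lorentzboost}, the boost $\Lambda_{\vec{a}}$ sends $\partial_t$ to precisely this vector. A short check, using that $\Lambda_a$ is continuously connected to the identity through $\{\Lambda_{ta}\}_{t\in[0,1]}$ and that conjugation by the rotation $D_{\vec{a}}$ preserves $\operatorname{SO}^+(1,3)$, shows that $\Lambda_{\vec{a}}\in\operatorname{SO}^+(1,3)$ as well.

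Next I would consider the composition $\widetilde{\Lambda}\definedas\Lambda_{\vec{a}}^{-1}\circ \Lambda\in \operatorname{SO}^+(1,3)$, which by construction satisfies $\widetilde{\Lambda}(\partial_t)=\partial_t$. Since $\widetilde{\Lambda}$ is a linear isometry of $\eta$ fixing $\partial_t$, it preserves the $\eta$-orthogonal complement $\partial_t^\perp=\spann{\partial_1,\partial_2,\partial_3}$, on which $\eta$ restricts to the Euclidean metric. Hence $\widetilde{\Lambda}$ acts on $\partial_t^\perp$ as an element $R\in\operatorname{O}(3)$. Because $\operatorname{SO}^+(1,3)$ is connected and the map $\widetilde{\Lambda}\mapsto \det R$ is continuous with values in $\{\pm 1\}$, we must have $R\in\operatorname{SO}(3)$. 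Setting $D$ to be the element of $\operatorname{SO}^+(1,3)$ induced by $R$ in the block form specified above, we obtain $\widetilde{\Lambda}=D$, and therefore
\[
\Lambda=\Lambda_{\vec{a}}\circ D,
\]
as claimed.

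The only subtle point is the orientation bookkeeping in the last step: one needs that $\widetilde{\Lambda}$ fixing $\partial_t$ together with membership in the identity component forces $R\in\operatorname{SO}(3)$ rather than merely $\operatorname{O}(3)$. This is handled by the connectedness argument above, since a spatial reflection would lie in a different component of $\operatorname{O}(1,3)$ than the identity.
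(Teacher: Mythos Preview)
Your proof is correct and follows essentially the same strategy as the paper: read off the boost from $\Lambda(\partial_t)$, then show that $\Lambda_{\vec a}^{-1}\circ\Lambda$ fixes $\partial_t$ and hence is a spatial rotation. The only differences are stylistic---the paper verifies the block-diagonal form of the remainder via an explicit isometry computation (showing the time components of $\widetilde\Lambda(\partial_i)$ vanish) rather than your orthogonal-complement argument, and excludes reflections by the same component/determinant reasoning you use.
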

	\begin{bem}\label{bem_decomplorentz}
		Note that $\vec{a}$ and $D$ are uniquely determined up to a choice of Cartesian coordinates, i.e., up to a choice of a positively oriented orthonormal basis on $\R^3$. As two such orthonormal frames are related by a uniquely determined rotation, $\vec{a}$ and $D$ transform under this rotation upon a change of the orthonormal frame. Hence, for fixed $\Lambda$ we may choose the orthonormal frame such that $D=\operatorname{Id}$.
	\end{bem}
	\begin{proof}
		Let $x^1$, $x^2$, $x^3$ denote Cartesian coordinates on $\R^3$. As $\Lambda\in \operatorname{SO}^+(1,3)$, there exists an $\vec{a}\in\R^3$ such that
		\[
		\Lambda(\partial_t)=\left(\sqrt{1+\btr{\vec{a}}^2},\vec{a}\right).
		\]
		By \eqref{eq_lorentzboost}, $L:=\Lambda^{-1}_{\vec{a}}\circ\Lambda$ is a linear isometry of the Minkowski spacetime with $L(\partial_t)=\partial_t$.
		Further $L(\partial_i)=(b_i,\vec{b}_i)$ for some $b_i\in\R$, $\vec{b}_i\in\R^3$, $i=1,2,3$. By Linearity,
		\[
		L(\partial_i-b_i\partial_t)=(0,\vec{b}_i),
		\]
		and as $L$ is an isometry, this implies that
		\[
		1-b_i^2=\newbtr{\vec{b_i}}^2=1+b_i^2.
		\]
		Thus $b_i=0$ for all $i=1,2,3$ and $L$ is of the form
		\[
		L=\begin{pmatrix}
		1&0\\
		0&R
		\end{pmatrix},
		\]
		for some $R\in\R^{3\times3}$. In particular, $R$ is a linear isometry on $\R^3$ and hence $L$ is either a reflection in the spacial directions or a rotation $D$ as defined above. As the restricted Lorentz group is the identity component of the Lorentz group, it does not contain reflections. This concludes the proof.
	\end{proof}
	
\section{An associated $4$-vector in the Minkowski spacetime}\label{sec_as4vector}
	Let $(\Sigma,\gamma)$ be a spacelike cross section of the future-pointing lightcone. Then, we define the \emph{associated $4$-vector ${\textbf{Z}}$ of $\Sigma$ in the ambient Minkowski spacetime} as
	\begin{align}\label{eq_associatedZ}
	\begin{split}
	{\textbf{Z}}(\Sigma)^0&:=\frac{1}{\btr{\Sigma}}\int_\Sigma t\d\mu_\Sigma,\\
	{\textbf{Z}}(\Sigma)^i&:=\frac{1}{\btr{\Sigma}}\int_\Sigma x^i\d\mu_\Sigma,
	\end{split}
	\end{align}
	where $x^i$ denotes the restriction of the standard Euclidean coordinates on $\R^3$ to $\Sigma$. In particular, the spacial coordinates are similarly defined as the Euclidean center of a surface in $\R^3$, see e.g. \cite{cederbaumsakovich,metzger}. In fact, up to rescaling to unit length and thus projecting to the hyperboloid $\{p\in\R^{1,3}\colon \eta(p,p)=-1\}$, this associated $4$-vector is directly equivalent to a notion of hyperbolic center (of mass) defined by Cederbaum--Cortier--Sakovich for surfaces in asymptotically hyperbolic initial data sets \cite{cederbaumcortiersakovich}. While they argue abstractly that this notion of center transforms equivariantly for surfaces on a hypberboloid in the Minkowski spacetime, the approach to prove this for spacelike cross sections of the standard lightcone presented here will be more explicit and computational in nature.
	
	By our previous considerations, any spacelike cross sections $(\Sigma,\gamma)$ is conformally round with conformal factor $\omega$ and $\Sigma=\graph_{\mathbb{S}^2}\omega$. Thus, for any spacelike cross section along the future-pointing lightcone, we have $t\vert_\Sigma=r\vert_\Sigma=\omega$, $x^i\vert_\Sigma=\omega f_i$, where $f_i$ denote first spherical harmonics. Hence,
	\begin{align}\label{eq_associatedZconformal}
	\begin{split}
	{\textbf{Z}}(\Sigma)^0&:=\frac{1}{\btr{\Sigma}}\int_{\mathbb{S}^2}\omega^3 \d\mu_{\mathbb{S}^2},\\
	{\textbf{Z}}(\Sigma)^i&:=\frac{1}{\btr{\Sigma}}\int_{\mathbb{S}^2} f_i\omega^3\d\mu_{\mathbb{S}^2}.
	\end{split}
	\end{align}
	
	In this context, we further note that the spatial coordinates closely resemble a notion of center of mass in conformal geometry, cf. \cite[Definition 2.8]{klainermanszeftel}, up to replacing $\omega^3$ by $\omega^2$ in the integrals. See also \cite{chang,changyang}. By a topological argument, it is a well-known fact that for any conformally round surface there exists a conformal transformation in the M\"obius group such that one can achieve
	\[
	\frac{1}{\btr{\Sigma}}\int_{\mathbb{S}^2} f_i\omega^2\d\mu_{\mathbb{S}^2}=0\text{ for all }i,
	\]
	see \cite{chang, changyang, onofri}. As we will show that the associated $4$-vector transforms equivariantly under Lorentz boosts in $\operatorname{SO}^+(1,3)$, see Proposition \ref{prop_Lorentztransform} below, we can achieve a similar balancing condition without relying on an implicit argument and uniquely identify the respective Lorentz transformation with respect to $\textbf{Z}$ up to isometries on $\Sbb^2$.
	
	As the future-pointing lightcone is mapped onto itself under any Lorentz transformation $\Lambda$ in $\operatorname{SO}^+(1,3)$, the image of any smooth spacelike cross section $\Sigma$ of the lightcone is itself a smooth, spacelike cross section $\Lambda(\Sigma)$. 
	\begin{lem}\label{lem_conformalfactorLorentztzransformation}
		Let $\Sigma$ be a spacelike cross section of the future-pointing standard Minkowski lightcone $\mathcal{N}$ with conformal factor $\omega$, and consider ${\Lambda\in \operatorname{SO}^+(1,3)}$. Then the conformal factor $\omega_\Lambda$ of $\Lambda(\Sigma)$ is given by
		\[
		\omega_\Lambda=\frac{\omega\circ\Phi_\Lambda}{\sqrt{1+\newbtr{\vec{a}}^2}-\vec{a}^if_i},
		\]
		where $\Phi_\Lambda\colon\mathbb{S}^2\to \mathbb{S}^2$ is a diffeomorphism, and $\Phi_\Lambda$ and $\vec{a}\in\R^3$ are uniquely determined by $\Lambda$.
	\end{lem}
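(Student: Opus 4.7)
The plan is to prove the statement by direct computation, relying on the Lorentz decomposition from Proposition \ref{prop_decomplorentz} and the explicit graphical description of spacelike cross sections on $\mathcal{N}$. First I would write $\Lambda=\Lambda_{\vec{a}}\circ D$ and absorb the rotation $D$ into $\Phi_\Lambda$: since $D$ preserves $\partial_t$, it restricts to an isometry of $\Sbb^2$ under the identification of future null directions with points on $\Sbb^2$, so $D$ only contributes a precomposition of $\omega$ with a diffeomorphism of $\Sbb^2$ and does not alter the denominator in the claimed formula. This reduces the problem to the pure boost $\Lambda_{\vec{a}}$, and fixes $\vec{a}$ uniquely by the identity $\Lambda(\partial_t)=(\sqrt{1+|\vec{a}|^2},\vec{a})$ coming from \eqref{eq_lorentzboost}.

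Next I would parametrize $\Sigma$ as the set of points $(\omega(\vec{x}),\omega(\vec{x})\vec{x})$ with $\vec{x}\in\Sbb^2$, split $\vec{x}=x^\parallel\hat{\vec{a}}+\vec{x}^\perp$ with $\hat{\vec{a}}=\vec{a}/|\vec{a}|$, and apply the general-direction boost formula
\[
\Lambda_{\vec{a}}(t_0,\vec{x}_0)=\bigl(b\,t_0+\vec{a}\cdot\vec{x}_0,\ \vec{x}_0^\perp+(|\vec{a}|t_0+b\,x_0^\parallel)\hat{\vec{a}}\bigr),
\]
with $b:=\sqrt{1+|\vec{a}|^2}$, which follows either from the conjugation $\Lambda_{\vec{a}}=D_{\vec{a}}\Lambda_{|\vec{a}|}D_{\vec{a}}^{-1}$ or from a two-line calculation using the special-boost matrix. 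Evaluating on $(\omega(\vec{x}),\omega(\vec{x})\vec{x})$ gives an image point whose time coordinate equals $\omega(\vec{x})(b+\vec{a}\cdot\vec{x})$; a direct check that the square of the spatial part equals the square of this time component confirms that the image lies on $\mathcal{N}$ and that $\omega_\Lambda\bigl(\Psi_{\Lambda_{\vec{a}}}(\vec{x})\bigr)=\omega(\vec{x})(b+\vec{a}\cdot\vec{x})$, where $\Psi_{\Lambda_{\vec{a}}}\colon\Sbb^2\to\Sbb^2$ is the induced map on null directions.

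Setting $\Phi_\Lambda:=\Psi_{\Lambda_{\vec{a}}}^{-1}\circ D^{-1}$ (so that $\vec{x}=\Phi_\Lambda(\vec{y})$ where appropriate), the remaining and most computationally delicate step is the algebraic identity
\[
\bigl(b+\vec{a}\cdot\vec{x}\bigr)\bigl(b-\vec{a}\cdot\vec{y}\bigr)=1,
\]
which I would verify by computing $\vec{a}\cdot\vec{y}=|\vec{a}|y^\parallel$ from the explicit expression $\vec{y}=(\vec{x}^\perp+(|\vec{a}|+bx^\parallel)\hat{\vec{a}})/(b+\vec{a}\cdot\vec{x})$ and using $b^2-|\vec{a}|^2=1$. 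This identity immediately converts the formula $\omega_\Lambda(\vec{y})=\omega(\Phi_\Lambda(\vec{y}))(b+\vec{a}\cdot\vec{x})$ into the desired form with denominator $\sqrt{1+|\vec{a}|^2}-\vec{a}^if_i$, recognizing $f_i(\vec{y})=y^i$.

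The main obstacle is purely the bookkeeping: keeping straight which sphere coordinate is in the domain and which in the target, and tracking the interplay between $\Psi_{\Lambda_{\vec{a}}}$ and its inverse so that the appearance of $\vec{y}$ (and not $\vec{x}$) in the denominator of the final formula is correct. Uniqueness of $\vec{a}$ follows from the first paragraph, and uniqueness of $\Phi_\Lambda$ then follows because any two diffeomorphisms yielding the same $\omega_\Lambda$ for every choice of $\omega$ must agree pointwise, as one can test against conformal factors localized near arbitrary points of $\Sbb^2$.
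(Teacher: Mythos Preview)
Your proposal is correct and follows essentially the same strategy as the paper: decompose $\Lambda=\Lambda_{\vec{a}}\circ D$ via Proposition~\ref{prop_decomplorentz}, absorb the rotation as a precomposition on $\Sbb^2$, and carry out the boost computation explicitly on points $(\omega,\omega\vec{x})$ of the graph. The only real difference is stylistic: the paper reduces one step further to the special boost $\Lambda_a$ along $\partial_3$ and computes in spherical coordinates $(\theta,\varphi)$, whereas you treat the general-direction boost $\Lambda_{\vec{a}}$ directly in vector notation; your key identity $(b+\vec{a}\cdot\vec{x})(b-\vec{a}\cdot\vec{y})=1$ is exactly the coordinate-free version of the paper's $b+a\cos\theta=1/(b-a\cos\theta')$.
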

	A straightforward computation then yields the following as a corollary:
	\begin{kor}\label{kor_conformalfactorLorentztzransformation}
		Let $\Sigma$ be a spacelike cross section of $\mathcal{N}$ with Gauss curvature $\mathcal{K}$, and consider $\Lambda\in \operatorname{SO}^+(1,3)$. Then the Gauss curvature $\mathcal{K}_\Lambda$ of $\Lambda(\Sigma)$ satisfies
		\[
		\mathcal{K}_\Lambda=\mathcal{K}\circ\Phi_\Lambda.
		\]
	\end{kor}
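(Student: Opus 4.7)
The conceptually quickest route is geometric. Since $\Lambda\in\operatorname{SO}^+(1,3)$ is a global isometry of $(\R^{1,3},\eta)$, its restriction to $\Sigma$ is a Riemannian isometry
\[
\Lambda\big|_\Sigma\colon (\Sigma,\gamma)\longrightarrow(\Lambda(\Sigma),\gamma_\Lambda)
\]
between these codimension-$2$ surfaces equipped with their induced metrics. As Gauss curvature is an intrinsic invariant, this gives $\mathcal{K}_\Lambda(\Lambda(p))=\mathcal{K}(p)$ for every $p\in\Sigma$. Expressing this pointwise identity through the graphical $\Sbb^2$-parametrizations of $\Sigma$ and $\Lambda(\Sigma)$ introduced in Section~\ref{subsec_prelim_nullgeom}, and using that by the very construction of $\Phi_\Lambda$ in Lemma \ref{lem_conformalfactorLorentztzransformation} the point $x\in\Sbb^2$ parametrizing $q\in\Lambda(\Sigma)$ corresponds to the point of $\Sigma$ parametrized by $\Phi_\Lambda(x)$, the identity $\mathcal{K}_\Lambda=\mathcal{K}\circ\Phi_\Lambda$ follows.

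For the ``straightforward computation'' the corollary alludes to, I would instead proceed analytically. Combining formula (ii) of Proposition~\ref{prop_minkowskilightcone} with the Gauss equation \eqref{eq_gaußcurvature} yields the standard expression
\[
\mathcal{K}=\frac{1}{\omega^2}\bigl(1-\Delta_{\Sbb^2}\ln\omega\bigr)
\]
for the Gauss curvature of a conformally round metric $\omega^2\d\Omega^2$ on $\Sbb^2$. Substituting the formula $\omega_\Lambda=(\omega\circ\Phi_\Lambda)/\Psi$ from Lemma \ref{lem_conformalfactorLorentztzransformation} with $\Psi\definedas\sqrt{1+\btr{\vec a}^2}-\vec a^i f_i$ and expanding $\ln\omega_\Lambda=\ln(\omega\circ\Phi_\Lambda)-\ln\Psi$, the claim reduces to two ingredients: the conformal transformation rule for the $2d$-Laplacian
\[
\Delta_{\Sbb^2}(V\circ\Phi_\Lambda)=\frac{1}{\Psi^2}\,(\Delta_{\Sbb^2}V)\circ\Phi_\Lambda
\]
for any smooth $V$ on $\Sbb^2$, and the identity
\[
1+\Delta_{\Sbb^2}\ln\Psi=\frac{1}{\Psi^2}.
\]

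Both ingredients follow from the key auxiliary fact that $\Phi_\Lambda$ is a M\"obius transformation of $(\Sbb^2,\d\Omega^2)$ with conformal factor $\Phi_\Lambda^*\d\Omega^2=\Psi^{-2}\d\Omega^2$. This is obtained by applying Lemma \ref{lem_conformalfactorLorentztzransformation} to $\omega\equiv 1$: the image $\Lambda(\Sbb^2)$ carries the conformal factor $\omega_\Lambda=1/\Psi$ and is itself an STCMC cross section by the discussion in Remark \ref{bem_codazzi}; as a round surface it has Gauss curvature $1$, and the Minkowski isometry $\Lambda^{-1}$ identifies $(\Lambda(\Sbb^2),\gamma_\Lambda)$ with $(\Sbb^2,\d\Omega^2)$ through precisely $\Phi_\Lambda$. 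Applying the Gauss-curvature formula to $\omega_\Lambda=1/\Psi$ then reads off $1+\Delta_{\Sbb^2}\ln\Psi=\Psi^{-2}$, while the identification $\Phi_\Lambda^*\d\Omega^2=\Psi^{-2}\d\Omega^2$ combined with the standard $2d$ behaviour of the Laplacian under conformal diffeomorphisms yields the first identity. Plugging both into the Gauss-curvature formula for $\omega_\Lambda$, the terms collapse term by term to $\mathcal{K}_\Lambda(x)=\mathcal{K}(\Phi_\Lambda(x))$.

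The only mildly delicate point is keeping track of pullback conventions when establishing $\Phi_\Lambda^*\d\Omega^2=\Psi^{-2}\d\Omega^2$; once this is in place, no further computation is required, and both proofs reflect the same underlying fact that $\Lambda$ acts isometrically on the induced geometry of spacelike cross sections.
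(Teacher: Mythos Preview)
Your proposal is correct. The paper does not spell out a proof; it merely states that the corollary follows from Lemma~\ref{lem_conformalfactorLorentztzransformation} by ``a straightforward computation,'' and your analytical approach is exactly that computation made explicit, with the two auxiliary identities (the conformal covariance of the $2d$ Laplacian under $\Phi_\Lambda$ and the Gauss-curvature identity $1+\Delta_{\Sbb^2}\ln\Psi=\Psi^{-2}$) correctly isolated and verified via the special case $\omega\equiv 1$.

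Your geometric argument is a genuinely different and cleaner route than what the paper implicitly intends. Rather than pushing the formula for $\omega_\Lambda$ through the conformal Gauss-curvature expression, you simply observe that $\Lambda\big|_\Sigma\colon(\Sigma,\gamma)\to(\Lambda(\Sigma),\gamma_\Lambda)$ is an intrinsic isometry, so Gauss curvature is transported pointwise; the only content is then reading off from the proof of Lemma~\ref{lem_conformalfactorLorentztzransformation} that in the $\Sbb^2$-parametrizations this isometry is represented by $\Phi_\Lambda^{-1}$, which you do correctly. This avoids all of the Laplacian bookkeeping and makes transparent why the result holds for any ambient isometry, not just $\operatorname{SO}^+(1,3)$. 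The analytical route, on the other hand, has the merit of making contact with the M\"obius-group viewpoint highlighted in Remark~\ref{bem_conformalfactorLorentztzransformation}(i) and yields the useful byproduct $\Phi_\Lambda^*\d\Omega^2=\Psi^{-2}\d\Omega^2$, which the paper uses implicitly later (e.g.\ in Remark~\ref{bem_Lorentztransform}). Your caution about pullback conventions is well placed: the direction $\Phi_\Lambda^*\d\Omega^2=\Psi^{-2}\d\Omega^2$ (rather than its inverse) is exactly what is needed and follows from the fact that, in coordinates, $\Lambda$ acts as $\Phi_\Lambda^{-1}$.
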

	\begin{bem}\label{bem_conformalfactorLorentztzransformation}\,
		\begin{enumerate}
			\item[(i)] 	In particular, $\Phi_\Lambda$ is in fact a M\"obius transformation on $\mathbb{S}^2$ uniquely determined for each $\Lambda\in \operatorname{SO}^+(1,3)$ by the isomorphism between the M\"obius group and the restricted Lorentz group, cf. Remark \ref{bem_codazzi}. In the context of M\"obius transformations and the conformal geometry of $\mathbb{S}^2$, both the invariance of the Gauss curvature and the precise transformation of the conformal factor under a M\"obius transformation as stated in Lemma \ref{lem_conformalfactorLorentztzransformation} are well-known facts, see e.g. \cite{klainermanszeftel} for a short summary and further references. Here, we give a different proof of the statement in Lemma \ref{lem_conformalfactorLorentztzransformation} as we approach it from an extrinsic viewpoint with the Lorentz transformations acting as isometries on the ambient Minkowski spacetime. See also \cite[Proposition 2.12]{chenwang} for an independet proof in the Minkowski spacetime.
			\item[(ii)] For $\Sigma_\rho$, i.e., $\omega=\rho>0$ for some positive constant, we recover the well-known formula for STCMC surfaces, cf. Remark \ref{bem_codazzi} Equation \eqref{eq_metricconstantscalar}. Direct computation gives
			\[
			{\textbf{Z}}(\Lambda(\Sigma_\rho))=\rho(\sqrt{1+\newbtr{\vec{a}}^2},\vec{a}).
			\]
			Hence, ${\textbf{Z}}(\Lambda(\Sigma_\rho))=\Lambda({\textbf{Z}}(\Sigma_\rho))$. 
			
			Moreover, for any timelike, future-pointing vector ${z}\in\R^{1,3}$, one can check by direct computation that the spacelike cross section corresponding to the conformal factor
			\begin{align}\label{eq_ZvectorSTCMC}
			\omega_{{z}}\colon=\frac{-\eta({z},{z})}{z^0-z^if_i}
			\end{align}
			is an STCMC surface with ${\textbf{Z}}(\Sigma_{\omega_{{z}}})={z}$. In this way, there is a one-to-one correspondence between timelike, future-pointing vectors and STCMC surfaces via ${\textbf{Z}}$.
		\end{enumerate}
	\end{bem}
	\begin{proof}[Proof of Lemma \ref{lem_conformalfactorLorentztzransformation}]
		Let us first consider a special Lorentz boosts $\Lambda_a$ as defined in Subsection \ref{subsec_prelim_lorentz}. In particular, the standard coordinates transform as
		\begin{align*}
		t'&=bt+az,\\
		x'&=x,\\
		y'&=y,\\
		z'&=at+bz,
		\end{align*}
		where $t',x',y',z'$ denote different Cartesian coordinates, and further consider spherical coordinates $t,r,\theta,\varphi$, and $t',r',\theta',\varphi'$ given via the set of transformations 
		\begin{align}
		\begin{split}\label{eq_transformation1}
		x^1=&r\sin\theta\sin\varphi,\\
		x^2=&r\sin\theta\cos\varphi,\\
		x^3=&r\cos\theta.
		\end{split}
		\end{align}
		As the lightcone is invariant under $\Lambda_a$, we know that $\omega_\Lambda=t'=r'$, and by the above identities it is easy to check that
		\[
		r=r'(b-a\cos\theta').
		\]
		Hence,
		\begin{align}\label{eq_omegaLambd1}
		\omega_\Lambda(\theta',\varphi')=\frac{\omega(\theta,\varphi)}{b-a\cos\theta'}.
		\end{align}
		Moreover, we see that
		\begin{align}
		\begin{split}\label{eq_transformation4}
		\sin\theta\sin\varphi&=\frac{\sin\theta'}{b-a\cos\theta'}\sin\varphi',\\
		\sin\theta\cos\varphi&=\frac{\sin\theta'}{b-a\cos\theta'}\cos\varphi',\\
		\cos\theta&=\frac{b\cos\theta'-a}{b-a\cos\theta'}.
		\end{split}
		\end{align}
		In particular, we obtain $\varphi=\varphi'$ and
		\begin{align}\label{eq_thetathetaprime1}
		\theta=\arccos\left(\frac{b\cos\theta'-a}{b-a\cos\theta'}\right)
		\end{align}
		A direct computation gives $\theta(\theta')\to 0$ as $\theta'\to 0$ and $\theta(\theta')\to \pi$ as $\theta'\to \pi$, and
		\begin{align}\label{eq_thetathetaprime2}
		\frac{\d\theta}{\d \theta'}=\frac{1}{b-a\cos\theta'}>0.
		\end{align}
		Thus, as $b>\btr{a}$, we note that
		\[
		\Phi_a\colon (0,\pi)\times(0,2\pi)\to (0,\pi)\times(0,2\pi)\colon (\theta',\varphi')\mapsto \left(\arccos\left(\frac{b\cos\theta'-a}{b-a\cos\theta'}\right),\varphi'\right)
		\]
		extends smoothly to a diffeomorphism $\Phi_a\colon \Sbb^2\to\Sbb^2$. This establishes the claim in the case of a special Lorentz boost.
		
		For a rotation $D$, let $\Phi_D\colon \mathbb{S}^2\to\mathbb{S}^2, \vec{x}\mapsto R(\vec{x})$ denote the corresponding diffeomorphism on $\Sbb^2$. In fact, $\Phi_D$ acts as an isometry on the round sphere. It is easy to check that
		\[
		\omega_D(\vec{x})=\omega\circ\Phi^{-1}_D(\vec{x}).
		\]
		In view of Proposition \ref{prop_decomplorentz} and choosing an appropriate ON-frame as in Remark \ref{bem_decomplorentz}, the claim directly follows by observing that for $D=D_{\vec{a}}$
		\begin{align*}
		\left(\sqrt{1+\newbtr{\vec{a}}^2}-\newbtr{\vec{a}}\cos\theta\right)(\Phi^{-1}_D)(\vec{x})
		&=\sqrt{1+\newbtr{\vec{a}}^2}-\left(\newbtr{\vec{a}}\partial_3\right)\cdot \Phi^{-1}_D(\vec{x})\\
		&=\sqrt{1+\newbtr{\vec{a}}^2}-\Phi_D\left(\newbtr{\vec{a}}\partial_3\right)\cdot \vec{x}\\
		&=\sqrt{1+\newbtr{\vec{a}}^2}-\vec{a}\cdot\vec{x}\\
		&=\sqrt{1+\newbtr{\vec{a}}^2}-\vec{a}^if_i,
		\end{align*}
		where we used that $\Phi_D$ is an isometry on the round sphere. Note further that by the above decomposition
		\[
		\Phi_\Lambda=\Phi_D\circ\Phi_{\newbtr{\vec{a}}}\circ \Phi_D^{-1}.
		\]
	\end{proof}
	From this, the following proposition is easily established:
	\begin{prop}\label{prop_Lorentztransform}
		Let $\Sigma$ be a spacelike cross section with associated $4$-vector ${\textbf{Z}}$, and consider $\Lambda\in \operatorname{SO}^+(1,3)$. Then $\btr{\Lambda(\Sigma)}=\btr{\Sigma}$, and ${\textbf{Z}}$ is a timelike, future-pointing vector, such that
		\[
		\Lambda({\textbf{Z}}(\Sigma))={\textbf{Z}}(\Lambda(\Sigma)).
		\]
	\end{prop}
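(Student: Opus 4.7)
The plan is to prove the three assertions in order, with the equivariance of $\textbf{Z}$ being the only substantive computation.

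Area preservation is immediate: since $\Lambda\in\operatorname{SO}^+(1,3)$ is an ambient isometry of $(\R^{1,3},\eta)$, its restriction $\Lambda|_\Sigma\colon(\Sigma,\gamma_\Sigma)\to(\Lambda(\Sigma),\gamma_{\Lambda(\Sigma)})$ is an isometry of Riemannian surfaces, so $|\Sigma|=|\Lambda(\Sigma)|$. For the timelike, future-pointing claim, positivity $\textbf{Z}^0>0$ is clear from \eqref{eq_associatedZconformal} since $\omega>0$, and it remains to show $|\vec{Z}(\Sigma)|_{\R^3}<\textbf{Z}(\Sigma)^0$. I would exploit the pointwise lightcone identity $|\vec{x}(p)|_{\R^3}=t(p)$ on $\Sigma$: for any unit vector $\vec{u}\in\R^3$, pointwise Cauchy--Schwarz in $\R^3$ and integration yield
\[
\vec{u}\cdot \vec{Z}(\Sigma)=\frac{1}{|\Sigma|}\int_\Sigma \vec{u}\cdot\vec{x}\, d\mu_\Sigma \le \frac{1}{|\Sigma|}\int_\Sigma |\vec{x}|_{\R^3}\, d\mu_\Sigma = \textbf{Z}(\Sigma)^0,
\]
and taking the supremum over $\vec{u}$ gives $|\vec{Z}(\Sigma)|_{\R^3}\le\textbf{Z}(\Sigma)^0$. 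Equality would force the radial direction $\vec{x}/|\vec{x}|_{\R^3}$ to be constantly equal to $\vec{u}$ on $\Sigma$, which is impossible since under the graphical representation $\Sigma=\graph_{\Sbb^2}\omega$ this direction coincides with the position vector on $\Sbb^2$ and hence varies over all of $\Sbb^2$.

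For equivariance, I would invoke Proposition \ref{prop_decomplorentz} together with the further conjugation $\Lambda_{\vec{a}}=D_{\vec{a}}\circ\Lambda_{|\vec{a}|}\circ D_{\vec{a}}^{-1}$ to reduce to two cases: an arbitrary rotation $D$ and a special Lorentz boost $\Lambda_a$. For rotations, Lemma \ref{lem_conformalfactorLorentztzransformation} gives $\omega_D=\omega\circ\Phi_D^{-1}$ with $\Phi_D$ acting as an isometry of $(\Sbb^2,d\Omega^2)$, and since the spherical harmonics $f_i$ are the restrictions of the Euclidean coordinates $x^i$ and hence transform equivariantly under $\Phi_D$, a direct change of variables in \eqref{eq_associatedZconformal} yields $\textbf{Z}(D(\Sigma))=D(\textbf{Z}(\Sigma))$. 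For the special boost $\Lambda_a$, I would substitute $\omega_{\Lambda_a}(\theta',\varphi')=\omega(\theta(\theta'),\varphi')/(b-a\cos\theta')$ from Lemma \ref{lem_conformalfactorLorentztzransformation} into \eqref{eq_associatedZconformal} and change variables to $(\theta,\varphi)$ using \eqref{eq_thetathetaprime2}, which gives the volume form relation $\sin\theta'\, d\theta'\, d\varphi'=(b-a\cos\theta')^2\sin\theta\, d\theta\, d\varphi$. The key algebraic identity $(b-a\cos\theta')(b+a\cos\theta)=1$, which follows from \eqref{eq_thetathetaprime1} together with $b^2-a^2=1$, combined with the backward reading of \eqref{eq_transformation4}, then collapses each integrand into exactly the matrix action of $\Lambda_a$ on the four components of $\textbf{Z}(\Sigma)$.

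The main obstacle is purely computational: bookkeeping between the two spherical coordinate systems $(\theta,\varphi)$ and $(\theta',\varphi')$ and verifying that the cubic weight $(b-a\cos\theta')^{-3}$ interacts correctly with both the volume form and the spherical harmonics. Once the identity $(b-a\cos\theta')(b+a\cos\theta)=1$ is in hand, all factors cancel cleanly, and no input beyond Lemma \ref{lem_conformalfactorLorentztzransformation} and Proposition \ref{prop_decomplorentz} is needed.
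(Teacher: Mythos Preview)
Your proposal is correct and follows essentially the same route as the paper: reduce via Proposition~\ref{prop_decomplorentz} to rotations and special boosts, handle rotations by the isometry property of $\Phi_D$ on $\Sbb^2$, and handle the special boost by substituting the formula from Lemma~\ref{lem_conformalfactorLorentztzransformation} and changing variables using \eqref{eq_thetathetaprime2} together with the algebraic identity $(b-a\cos\theta')(b+a\cos\theta)=1$. The only cosmetic differences are that you obtain $|\Lambda(\Sigma)|=|\Sigma|$ directly from $\Lambda$ being an ambient isometry (the paper instead verifies it by the same explicit substitution used for $\textbf{Z}$), and you prove the timelike, future-pointing claim via Cauchy--Schwarz over all unit vectors $\vec{u}$, whereas the paper first rotates so that the spatial part of $\textbf{Z}$ points along $\partial_3$ and then estimates $\int_{\Sbb^2}\omega^3(1-|f_3|)>0$; these are equivalent arguments.
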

	\begin{bem}\label{bem_Lorentztransform}
		In fact, it holds that
		\[
		\int_{\Lambda(\Sigma)}f\circ\Phi_\Lambda=\int_{\Sigma}f
		\]
		for any continuous function $f$ on $\Sbb^2$. Hence, by Corollary \ref{kor_conformalfactorLorentztzransformation}, we find
		\begin{align}\label{eq_gaussL2Lorentztransform}
		\norm{\mathcal{K}_\Lambda-1}^2_{L^2(\Lambda(\Sigma)}=\norm{\mathcal{K}-1}^2_{L^2(\Sigma)}.
		\end{align}
	\end{bem}
	\begin{proof}[Proof of Proposition \ref{prop_Lorentztransform}]
		In view of \eqref{eq_associatedZconformal} it is straightforward to check that $\textbf{Z}$ transforms accordingly under spatial rotations as they act as isometries on $\Sbb^2$. In view of Proposition \ref{prop_decomplorentz} it remains to prove the claim for special Lorentz boosts $\Lambda_a$.
		
		Now let $\Lambda_a$ be a special Lorentz boost for $a\in\R$. Recall from the proof of Lemma \ref{lem_conformalfactorLorentztzransformation} that
		\[
		\omega_\Lambda(\theta',\varphi')=\frac{\omega(\theta(\theta'),\varphi')}{b-a\cos\theta'},
		\]
		with $\theta$ given by \eqref{eq_thetathetaprime1}. From this, we can explicitly compute that
		\begin{align*}
		\sin\theta&=\frac{\sin\theta'}{b-a\cos\theta'},\\
		b+a\cos\theta&=\frac{1}{b-a\cos\theta'},\\
		b\cos\theta+a&=\frac{\cos\theta'}{b-a\cos\theta'}.
		\end{align*}
		Then, using \eqref{eq_thetathetaprime2} and substitution gives
		\begin{align*}
		\btr{\Lambda(\Sigma)}
		&=\int\limits_{0}^{2\pi}\int\limits_{0}^\pi\frac{\omega(\theta(\theta'),\varphi(\varphi'))^2}{(b-a\cos\theta')^2}\sin\theta'\d\theta'\d\varphi'
		=\int\limits_{0}^{2\pi}\int\limits_{0}^\pi\omega(\theta(\theta'),\varphi)^2\sin\theta(\theta')\frac{\d\theta}{\d\theta'}\d\theta'\d\varphi
		=\btr{\Sigma},
		\end{align*}
		and
		\begin{align*}
		\int_{\mathbb{S}^2}\omega_\Lambda^3\d\mu=&\,\int\limits_{0}^{2\pi}\int\limits_{0}^\pi\frac{\omega(\theta(\theta'),\varphi(\varphi'))^3}{(b-a\cos\theta')^3}\sin\theta'\d\theta'\d\varphi'\\
		=&\,b\int\limits_{0}^{2\pi}\int\limits_{0}^\pi{\omega(\theta(\theta'),\varphi(\varphi'))^3}\sin\theta\frac{\d\theta}{\d\theta'}\d\theta'\d\varphi'+a\int\limits_{0}^{2\pi}\int\limits_{0}^\pi{\omega(\theta(\theta'),\varphi(\varphi'))^3}\cos\theta\sin\theta\frac{\d\theta}{\d\theta'}\d\theta'\d\varphi'\\
		=&\,b\int\limits_{0}^{2\pi}\int\limits_{0}^\pi{\omega(\theta,\varphi)^3}\sin\theta\d\theta\d\varphi+a\int\limits_{0}^{2\pi}\int\limits_{0}^\pi{\omega(\theta,\varphi)^3}\cos\theta\sin\theta\d\theta\d\varphi\\
		=&\,b\int_{\Sbb^2}\omega^3\d\mu+a\int_{\Sbb^2}\omega^3f_3\d\mu.
		\end{align*}
		Similarly
		\begin{align*}
		\int_{\Sbb^2}\omega_\Lambda^3f_1\d\mu&=\int_{\Sbb^2}\omega^3f_1\d\mu,\\
		\int_{\Sbb^2}{\omega}_\Lambda^3f_2\d\mu&=\int_{\Sbb^2}\omega^3f_2\d\mu,\\
		\int_{\Sbb^2}{\omega}_\Lambda^3f_3\d\mu&=b\int_{\Sbb^2}\omega^3f_3\d\mu+a\int_{\Sbb^2}\omega^3\d\mu.
		\end{align*}
		By \eqref{eq_associatedZconformal} it follows that
		\[
		{\textbf{Z}}(\Lambda_a(\Sigma))=\Lambda_a({\textbf{Z}}(\Sigma)),
		\]
		and it remains to show that $\textbf{Z}(\Sigma)$ is timelike, future-pointing. 
		By the above, we may assume without loss of generality that
		\[
		{\textbf{Z}}=\begin{pmatrix}
		{\textbf{Z}}^0\\
		0\\
		0\\
		{\textbf{Z}}^3
		\end{pmatrix},
		\]
		after a suitable rotation of the spatial coordinates in the ambient Minkowski spacetime. Thus
		\[
		\btr{\Sigma}\left({\textbf{Z}}^0-\newbtr{{\textbf{Z}}^3}\right)\ge \int_{\Sbb^2}\omega^3(1-\btr{f_3})>0,
		\]
		which implies that $\textbf{Z}$ is timelike, future-pointing.
	\end{proof}
	Using the one-to-one correspondence between STCMC surfaces and timelike, future-pointing vectors induced by the definition of ${\textbf{Z}}$, we define an a-priori class suitable for our purposes.
	\begin{defi}\label{defi_kappabounded}
		Let $\kappa>0$. We say a spacelike cross section $\Sigma$ is \emph{$\kappa$-bounded}, if
		\[
		(1+\kappa)^{-1}\omega_{{\textbf{Z}}}\le\omega\le (1+\kappa)\omega_{{\textbf{Z}}},
		\]
		where ${\textbf{Z}}={\textbf{Z}}(\Sigma)$.
	\end{defi}
	\begin{bem}\label{kappa_bounded}
		As all surfaces under consideration are smooth any spacelike cross section is of course $\kappa$-bounded for some appropriate $\kappa=\kappa(\omega)>0$. As all the estimates derived below assume $\kappa$ to be fixed a-priori, we may rephrase them as depending an a suitable sup-bound on $\omega$ with constants explicitly depending on this sup-bound.
	\end{bem}
	We close this subsection with some lemmas to be used later.
	\begin{lem}\label{lem_kappabounded}
		$\kappa$-boundedness is preserved under rescaling and Lorentz transformations \linebreak ${\Lambda\in \operatorname{SO}^+(1,3)}$.
	\end{lem}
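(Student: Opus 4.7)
The plan is to treat the two cases separately by exploiting the equivariance already established and the explicit formulas for the STCMC conformal factor $\omega_{\textbf{Z}}$ in Remark \ref{bem_conformalfactorLorentztzransformation} and for $\omega_\Lambda$ in Lemma \ref{lem_conformalfactorLorentztzransformation}. In both cases the idea is to show that $\omega$ and its reference STCMC partner $\omega_{\textbf{Z}}$ transform in exactly the same way, so that the two-sided bound defining $\kappa$-boundedness is preserved pointwise.

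For the rescaling $\omega \mapsto \lambda\omega$ with $\lambda>0$, I first note that the area element transforms as $\d\mu_\Sigma \mapsto \lambda^2\d\mu_\Sigma$, hence $|\Sigma|\mapsto \lambda^2|\Sigma|$, while each integrand $\omega^3$ or $f_i\omega^3$ in \eqref{eq_associatedZconformal} is multiplied by $\lambda^3$. Therefore $\textbf{Z}\mapsto \lambda\textbf{Z}$. Plugging this into the explicit expression
\[
\omega_{\textbf{Z}}=\frac{-\eta(\textbf{Z},\textbf{Z})}{\textbf{Z}^0-\textbf{Z}^if_i}
\]
from \eqref{eq_ZvectorSTCMC} shows that the numerator scales by $\lambda^2$ and the denominator by $\lambda$, so $\omega_{\textbf{Z}}\mapsto \lambda\omega_{\textbf{Z}}$. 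Both sides of the inequality $(1+\kappa)^{-1}\omega_{\textbf{Z}}\le \omega\le(1+\kappa)\omega_{\textbf{Z}}$ are thus multiplied by the same positive constant, so the bound persists.

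For the Lorentz case, fix $\Lambda\in \operatorname{SO}^+(1,3)$ and let $\Phi_\Lambda$ and $\vec{a}$ be as in Lemma \ref{lem_conformalfactorLorentztzransformation}, so that
\[
\omega_\Lambda=\frac{\omega\circ\Phi_\Lambda}{\sqrt{1+\newbtr{\vec{a}}^2}-\vec{a}^if_i}.
\]
The key observation is that the same lemma applies to the reference STCMC surface $\Sigma_{\omega_{\textbf{Z}}}$. By Proposition \ref{prop_Lorentztransform} we have $\textbf{Z}(\Lambda(\Sigma))=\Lambda(\textbf{Z})$, and by the one-to-one correspondence between timelike, future-pointing $4$-vectors and STCMC surfaces noted in Remark \ref{bem_conformalfactorLorentztzransformation}(ii), the image $\Lambda(\Sigma_{\omega_{\textbf{Z}}})$ is precisely the STCMC surface associated with $\Lambda(\textbf{Z})$, i.e. $\Sigma_{\omega_{\Lambda(\textbf{Z})}}$. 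Applying Lemma \ref{lem_conformalfactorLorentztzransformation} to $\Sigma_{\omega_{\textbf{Z}}}$ therefore yields
\[
\omega_{\Lambda(\textbf{Z})}=\frac{\omega_{\textbf{Z}}\circ\Phi_\Lambda}{\sqrt{1+\newbtr{\vec{a}}^2}-\vec{a}^if_i},
\]
with the same $\Phi_\Lambda$ and $\vec{a}$ as for $\omega$.

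With both transformation formulas in hand, the conclusion is immediate: evaluate the hypothesis $(1+\kappa)^{-1}\omega_{\textbf{Z}}\le\omega\le(1+\kappa)\omega_{\textbf{Z}}$ at $\Phi_\Lambda(\vec{x})$ and divide through by the positive factor $\sqrt{1+\newbtr{\vec{a}}^2}-\vec{a}^if_i(\vec{x})>0$ to obtain $(1+\kappa)^{-1}\omega_{\Lambda(\textbf{Z})}(\vec{x})\le\omega_\Lambda(\vec{x})\le(1+\kappa)\omega_{\Lambda(\textbf{Z})}(\vec{x})$, so $\Lambda(\Sigma)$ is $\kappa$-bounded with the same constant $\kappa$. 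There is no real obstacle here beyond correctly identifying $\Lambda(\Sigma_{\omega_{\textbf{Z}}})$ with $\Sigma_{\omega_{\Lambda(\textbf{Z})}}$, which is where the equivariance of $\textbf{Z}$ and the injectivity of the $\textbf{Z}\mapsto\Sigma_{\omega_{\textbf{Z}}}$ correspondence from Remark \ref{bem_conformalfactorLorentztzransformation}(ii) are both essential.
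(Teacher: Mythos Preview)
Your proof is correct and follows essentially the same approach as the paper's: the key step in both is the identification $\Lambda(\Sigma_{\omega_{\textbf{Z}}})=\Sigma_{\omega_{\Lambda(\textbf{Z})}}$ via Proposition~\ref{prop_Lorentztransform} and the one-to-one correspondence in Remark~\ref{bem_conformalfactorLorentztzransformation}(ii), after which the $\kappa$-boundedness inequality is preserved pointwise. The paper's proof is simply terser, leaving implicit the explicit transformation formulas from Lemma~\ref{lem_conformalfactorLorentztzransformation} and the scaling computation that you spell out.
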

	\begin{proof}
		As ${\textbf{Z}}(\Sigma_{\omega_{{\textbf{Z}}}})={\textbf{Z}}$ we can conclude that
		\[
		\Lambda(\Sigma_{\omega_{{\textbf{Z}}}})=\Sigma_{\omega_{\Lambda({\textbf{Z}})}}
		\]
		using Proposition \ref{prop_Lorentztransform} and the one-to-one correspondence between STCMC surfaces on the future-pointing standard lightcone and timelike, future-pointing vectors as in Remark \ref{bem_conformalfactorLorentztzransformation} (ii). The claim then follows directly from Proposition \ref{prop_Lorentztransform} and the scaling properties of $\textbf{Z}$.
	\end{proof}
	\begin{lem}\label{lem_kappabounded2}
		Let $\Sigma$ be $\kappa$-bounded. Then up to isometries on $\Sbb^2$ there exists a unique Lorentztransformation $\Lambda\in \operatorname{SO}^+(1,3)$ such that $\omega_\Lambda$ satisfies 
		\[
		\int_{\Sbb^2}f_i\omega_\Lambda^3\d\mu=0\text{ for all }i=1,2,3\,
		\]
		and
		\[
		(1+\kappa)^{-2}r\le \omega_\Lambda\le (1+\kappa)^2r,
		\]
		where $r=\sqrt{\frac{\btr{\Sigma}}{4\pi}}$ is the area radius of $\Sigma$.
	\end{lem}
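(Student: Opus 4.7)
The plan is to translate the balancing condition on the moments of $\omega_\Lambda^3$ into a statement about the associated $4$-vector $\textbf{Z}$ and then exploit its Lorentz-equivariance (Proposition \ref{prop_Lorentztransform}). Indeed, by \eqref{eq_associatedZconformal}, the vanishing of $\int_{\Sbb^2} f_i\omega_\Lambda^3\d\mu$ for $i=1,2,3$ is equivalent to the vanishing of the spatial part of $\textbf{Z}(\Lambda(\Sigma))$, and by Proposition \ref{prop_Lorentztransform} this coincides with the spatial part of $\Lambda\textbf{Z}(\Sigma)$. So the first part of the lemma reduces to finding $\Lambda\in\operatorname{SO}^+(1,3)$ mapping $\textbf{Z}(\Sigma)$ onto the time axis.

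Since $\textbf{Z}(\Sigma)$ is timelike future-pointing by Proposition \ref{prop_Lorentztransform}, I would set $\lambda:=\sqrt{-\eta(\textbf{Z}(\Sigma),\textbf{Z}(\Sigma))}>0$ and $\vec{a}:=\vec{Z}(\Sigma)/\lambda$. By \eqref{eq_lorentzboost} one checks directly that $\Lambda_{\vec a}(\lambda\partial_t)=\textbf{Z}(\Sigma)$, so $\Lambda:=\Lambda_{\vec a}^{-1}=\Lambda_{-\vec a}\in\operatorname{SO}^+(1,3)$ achieves $\Lambda\textbf{Z}(\Sigma)=(\lambda,0,0,0)$, which is the desired balancing condition. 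For uniqueness up to isometries of $\Sbb^2$, if $\Lambda_1,\Lambda_2$ both satisfy the balancing condition, then $\Lambda_i\textbf{Z}(\Sigma)$ are both purely timelike and of equal Lorentzian length, hence equal to $(\lambda,0,0,0)$; consequently $\Lambda_1\Lambda_2^{-1}$ fixes $\partial_t$. Repeating the stabilizer argument from the proof of Proposition \ref{prop_decomplorentz} shows $\Lambda_1\Lambda_2^{-1}$ is a spatial rotation $D$, and by Lemma \ref{lem_conformalfactorLorentztzransformation} the associated $\Phi_D$ is an isometry of $(\Sbb^2,\d\Omega^2)$.

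For the pointwise bounds I would invoke Lemma \ref{lem_kappabounded}: $\Lambda(\Sigma)$ is still $\kappa$-bounded relative to its own associated $4$-vector $\textbf{Z}(\Lambda(\Sigma))=\Lambda\textbf{Z}(\Sigma)=(\lambda,0,0,0)$, so
\[
(1+\kappa)^{-1}\omega_{\textbf{Z}(\Lambda(\Sigma))}\le\omega_\Lambda\le(1+\kappa)\omega_{\textbf{Z}(\Lambda(\Sigma))}.
\]
Because the spatial part of $\textbf{Z}(\Lambda(\Sigma))$ vanishes, the explicit formula \eqref{eq_ZvectorSTCMC} collapses to $\omega_{\textbf{Z}(\Lambda(\Sigma))}\equiv\lambda$, a positive constant. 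The last step is to identify $\lambda$ with the area radius $r$ up to the $\kappa$-factors: integrating the displayed pointwise bound over $\Sbb^2$ and using $|\Lambda(\Sigma)|=|\Sigma|=4\pi r^2$ (Proposition \ref{prop_Lorentztransform}) together with $|\Sigma_\lambda|=4\pi\lambda^2$ gives $(1+\kappa)^{-1}r\le\lambda\le(1+\kappa)r$. Substituting back into the above yields the claimed $(1+\kappa)^{-2}r\le\omega_\Lambda\le(1+\kappa)^2r$.

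I expect no analytical obstacles: once Proposition \ref{prop_Lorentztransform} is in hand, the argument is entirely algebraic/geometric, and the ``hardest'' piece is the bookkeeping for uniqueness, where one must confirm that the stabilizer of a timelike axis in $\operatorname{SO}^+(1,3)$ consists exactly of spatial rotations and that these correspond, via $\Lambda\mapsto\Phi_\Lambda$, precisely to the isometries of the round $\Sbb^2$.
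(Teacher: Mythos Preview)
Your proof is correct and follows essentially the same approach as the paper: both pick the Lorentz boost sending $\textbf{Z}(\Sigma)$ to the time axis (your $\lambda$ is the paper's $r_{\textbf{Z}}$), use the equivariance in Proposition~\ref{prop_Lorentztransform} to obtain the balancing condition, apply Lemma~\ref{lem_kappabounded} for the pointwise bounds, and integrate to compare $\lambda$ with $r$. Your treatment of uniqueness via the stabilizer of $\partial_t$ is more explicit than the paper's (which simply asserts ``unique up to rotations''), but the substance is the same.
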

	\begin{proof}
		Integrating the inequality in Definition \ref{defi_kappabounded} immediately yields that
		\begin{align}\label{eq_lemmakappabounded1}
		(1+\kappa)^{-1}{r_{\textbf{Z}}}\le r\le (1+\kappa){r_{\textbf{Z}}},
		\end{align}
		where $r_{{\textbf{Z}}}:=\sqrt{-\eta({\textbf{Z}}(\Sigma),{\textbf{Z}}(\Sigma))}$.
		Let $\Lambda\in \operatorname{SO}^+(1,3)$ be the Lorentz boost (unique up to rotations) such that 
		\[
		\Lambda({\textbf{Z}}(\Sigma))=\left(r_{{\textbf{Z}}},0,0,0\right)
		\] 
		By Proposition \ref{prop_Lorentztransform} it follows that
		\[
		\int_{\Sbb^2}f_i\omega_\Lambda^3\d\mu=0.
		\]
		In particular, $\omega_{\Lambda(\textbf{Z})}=r_{\textbf{Z}}$. Hence
		\[
		(1+\kappa)^{-1}r_{\textbf{Z}}\le \omega_\Lambda\le (1+\kappa)r_{\textbf{Z}},
		\]
		as $\kappa$-boundedness is preserved by Lemma \ref{lem_kappabounded}. The claim then follows by Equation \eqref{eq_lemmakappabounded1}.
	\end{proof}
	\begin{bem}\label{bem_kappabounded}
		In fact, it holds that
		\[
		(1+\kappa)^{-1}r\le \omega_\Lambda\le (1+\kappa)^2r
		\]
		as $r_{\textbf{Z}}:=\sqrt{-\eta(\textbf{Z},\textbf{Z})}\ge r$ with equality if and only if $\omega=\omega_{\textbf{Z}}$. To see this, we can assume wlog that $\textbf{Z}=\left(r_{\textbf{Z}},\vec{0}\right)$ using Proposition \ref{prop_Lorentztransform} as above. Thus
		\[
		\btr{\Sigma_{\omega_{\textbf{Z}}}}=\frac{1}{\btr{\Sigma}^2}\int_{\mathbb{S}^2}\left(\int_{\mathbb{S}^2}\omega^3\right)^2=4\pi\frac{\left(\int_{\mathbb{S}^2}\omega^3\right)^2}{\left(\int_{\mathbb{S}^2}\omega^2\right)^2}\ge \btr{\Sigma},
		\]
		which follows from applying the H\"older inequality twice.
	\end{bem}
\section{A scaling invariant estimate on the lightcone}\label{sec_estimate}
	In this section, we want to prove a scaling invariant, geometric estimate of the form
	\begin{align}\label{eq_tracefreeA}
	\btr{\Sigma}\int_\Sigma \btr{A-\frac{\fint\mathcal{H}^2}{2}\gamma}^2\le C\btr{\Sigma} \int_\Sigma\newbtr{\accentset{\circ}{A}}^2.
	\end{align}
	for any spacelike cross section $\Sigma$ of the standard Minkowski lightcone, where $C>0$ is a uniform constant. We present two different proofs of such an estimate both requiring that the spacelike cross section under consideration satisfies $\mathcal{H}^2\ge 0$.
\subsection{A proof using null mean curvature flow}\label{subsec_nullmeancurvatureflow}
	We give a proof of the desired estimate \eqref{eq_tracefreeA} on the standard Minkowski lightcone using null mean curvature flow. Recall that null mean curvature flow along a null hypersurface with null generator $\ul{L}$ as first studied by Roesch--Scheuer \cite{roeschscheuer} is defined as the evolution of a family of spacelike cross sections $\Sigma_t$ with
	\[
		\frac{\d}{\d t}x=\frac{1}{2}\mathfrak{g}\left(\vec{\mathcal{H}},L\right),
	\]
	where $\vec{\mathcal{H}}$ is the mean curvature vector of $\Sigma_t$ in the ambient spacetime $(\mathfrak{M},\mathfrak{g})$ and $L$ is the unique null vector field normal to $\Sigma_t$ such that $\mathfrak{g}(\ul{L},L)=2$. In the case of the standard Minkowski lightcone, null mean curvature flow has recently been studied by the author in \cite{wolff1}. In particular, with respect to the null generator $\ul{L}=2\partial_v$, null mean curvature flow on the standard Minkowski lightcone is equivalent to the the scalar evolution equation
	\begin{align}
		\frac{\d}{\d t}\omega=-\frac{1}{2}\theta
	\end{align}
	for the conformal factor $\omega$. We note that this evolution equation, and hence null mean curvature flow along the standard Minkowski lightcone, is equivalent to $2d$-Ricci flow in the conformal class of the round sphere, cf. \cite[Section 4]{wolff1}. Hence in this case, the fact that all singularity models for null mean curvature flow are STCMC surfaces (\cite[Corollary 11]{wolff1}) follows directly from a classical result first proven by Hamilton \cite{hamilton1}, see also \cite{chow1, bartzstruweye, struwe, andrewsbryan}. Additionally, the author gave an independent proof of this result for spacelike cross sections with $\mathcal{H}^2>0$, cf. \cite[Theorem 12]{wolff1}. Due to the Gauss equation \eqref{eq_gaußcurvature} and the equivalence between the flows, this gives a new proof of Hamilton's classical result under its initial restriction of positive scalar curvature in the conformally round case, cf. \cite{hamilton1}.
	\begin{thm}\label{thm_tracefreeA1}
		Let $\{\Sigma_t\}_t$ be a family of spacelike cross sections evolving under null mean curvature flow with $\mathcal{H}^2\ge0$. Then
		\[
		\frac{\d }{\d t}\left(
		\btr{\Sigma}\int\frac{1}{2}(\mathcal{H}^2)^2-2\newbtr{\accentset{\circ}{A}}^2\right)\ge 0.
		\]
		As a consequence, we have
		\[
		\btr{\Sigma}\int\frac{1}{2}(\mathcal{H}^2)^2-2\newbtr{\accentset{\circ}{A}}^2\le 128\pi^2=\frac{1}{2}\left(\int\mathcal{H}^2\right)^2,
		\]
		with equality if and only if $\{\Sigma_t\}$ is a family of shrinking STCMC surfaces.
	\end{thm}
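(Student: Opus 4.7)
The plan is to introduce the functional
\[
F(t)\definedas\btr{\Sigma_t}\int_{\Sigma_t}\lr{\tfrac{1}{2}(\mathcal{H}^2)^2-2\newbtr{\accentset{\circ}{A}}^2}\d\mu_t,
\]
to prove the monotonicity $F'(t)\ge 0$ along null mean curvature flow (which is the first claim), and then to extract the bound $F(0)\le 128\pi^2$ by identifying $\lim_{t\to T^*}F(t)=128\pi^2$ using the classification of singularity models in \cite[Corollary 11]{wolff1}.

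I would work throughout in terms of the equivalence of null mean curvature flow with $2d$-Ricci flow $\partial_t\gamma=-\scal\,\gamma$ in the conformal class of $(\Sbb^2,\d\Omega^2)$ from \cite{wolff1}. Combined with $\scal=\tfrac{1}{2}\mathcal{H}^2$ via \eqref{eq_gaußcurvature} and Gauss--Bonnet on the topological sphere, this yields the linear area decay $\frac{d}{dt}\btr{\Sigma_t}=-\tfrac{1}{2}\int\mathcal{H}^2\d\mu_t=-8\pi$, so the flow extincts at $T^*=\btr{\Sigma_0}/(8\pi)$ with $\int\mathcal{H}^2\d\mu_t\equiv 16\pi$ conserved, and Hamilton's $2d$-evolution of the scalar curvature translates into $\partial_t\mathcal{H}^2=\Delta\mathcal{H}^2+\tfrac{1}{2}(\mathcal{H}^2)^2$. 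Together with $\partial_t\d\mu_t=-\tfrac{1}{2}\mathcal{H}^2\d\mu_t$, integration by parts gives
\[
\frac{d}{dt}\int(\mathcal{H}^2)^2\d\mu_t=-2\int\btr{\nabla\mathcal{H}^2}^2\d\mu_t+\tfrac{1}{2}\int(\mathcal{H}^2)^3\d\mu_t.
\]
The delicate step is the companion evolution of $\int\newbtr{\accentset{\circ}{A}}^2\d\mu_t$. Starting either from $A=\ul{\theta}\chi$ via Proposition \ref{prop_minkowskilightcone} or from the cleaner representation $\accentset{\circ}{A}=-\tfrac{4}{\omega}\accentset{\circ}{\Hess}_\gamma\omega$ in Remark \ref{bem_codazzi} coupled with $\partial_t\omega=-\theta/2$, one derives a reaction--diffusion type evolution for $\accentset{\circ}{A}$. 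The Codazzi identity from Proposition \ref{prop_codazziminkowski2}, which yields $\operatorname{div}\accentset{\circ}{A}=\tfrac{1}{2}\nabla\mathcal{H}^2$, is then inserted via integration by parts to generate cross-terms involving $\btr{\nabla\mathcal{H}^2}^2$ that, upon forming $F'(t)$, cancel those arising in the previous display; the remaining contributions should assemble into an integral of a pointwise nonnegative quadratic expression in $\accentset{\circ}{A}$ and $\mathcal{H}^2$, where the hypothesis $\mathcal{H}^2\ge 0$ controls the sign-indefinite cubic terms stemming from the reaction $\tfrac{1}{2}(\mathcal{H}^2)^2$.

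For the global bound, observe that $F$ is scaling invariant (the two factors carry opposite scaling weights). By \cite[Corollary 11]{wolff1}, suitably rescaled solutions converge as $t\to T^*$ to a shrinking round STCMC surface, on which $\omega(t)=\sqrt{2(T^*-t)}$, $\accentset{\circ}{A}\equiv 0$ and $\mathcal{H}^2=4/\omega^2$; a direct computation then gives $F\equiv 128\pi^2$ along this shrinker. By scaling invariance and smooth convergence, $\lim_{t\to T^*}F(t)=128\pi^2$, and together with monotonicity this yields $F(0)\le 128\pi^2$. The identity $128\pi^2=\tfrac{1}{2}\lr{\int\mathcal{H}^2}^2$ is just Gauss--Bonnet. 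In the equality case $F(0)=128\pi^2$, monotonicity forces $F\equiv 128\pi^2$, hence $F'\equiv 0$; tracing back through the monotonicity computation each pointwise nonnegative contribution must vanish, forcing $\accentset{\circ}{A}\equiv 0$ along the flow, so $\{\Sigma_t\}$ is a shrinking STCMC family, and the converse is immediate.

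The main obstacle will be making the evolution of $\int\newbtr{\accentset{\circ}{A}}^2\d\mu_t$ combine cleanly with that of $\int(\mathcal{H}^2)^2\d\mu_t$: the specific coefficient $-2$ multiplying $\newbtr{\accentset{\circ}{A}}^2$ in the definition of $F$ has to be pinned down exactly so that after using Codazzi to turn divergences into $\btr{\nabla\mathcal{H}^2}^2$ and after integrating by parts, the $\btr{\nabla\mathcal{H}^2}^2$-contributions cancel and the residual cubic terms assemble into a nonnegative integrand under the assumption $\mathcal{H}^2\ge 0$.
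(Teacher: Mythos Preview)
Your overall strategy matches the paper's, but there is a genuine gap in your expectation for how the monotonicity computation closes. After you carry out the evolution of $F$ (using $\partial_t\mathcal{H}^2=\Delta\mathcal{H}^2+\tfrac{1}{2}(\mathcal{H}^2)^2$, the evolution $\partial_t\newbtr{\accentset{\circ}{A}}^2=\Delta\newbtr{\accentset{\circ}{A}}^2-2\bigl(\btr{\nabla A}^2-\tfrac{1}{2}\btr{\nabla\mathcal{H}^2}^2\bigr)$, and $\partial_t\d\mu=-\tfrac{1}{2}\mathcal{H}^2\d\mu$), the $\btr{\nabla\mathcal{H}^2}^2$-terms do cancel exactly as you anticipate---but only after invoking the \emph{pointwise} Kato-type inequality $\btr{\nabla A}^2\ge\tfrac{3}{4}\btr{\nabla\mathcal{H}^2}^2$ from Proposition \ref{prop_codazziminkowski2}, not merely the Codazzi identity $\operatorname{div}\accentset{\circ}{A}=\tfrac{1}{2}\nabla\mathcal{H}^2$ under integration by parts. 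What remains is
\[
F'(t)\ \ge\ \int\mathcal{H}^2\!\int\newbtr{\accentset{\circ}{A}}^2
\;+\;\btr{\Sigma}\!\int\mathcal{H}^2\newbtr{\accentset{\circ}{A}}^2
\;+\;\tfrac{1}{2}\Bigl(\btr{\Sigma}\!\int(\mathcal{H}^2)^3-\int\mathcal{H}^2\!\int(\mathcal{H}^2)^2\Bigr).
\]
The first two summands are nonnegative under $\mathcal{H}^2\ge 0$, but the third is \emph{not} the integral of a pointwise nonnegative expression: because $F$ is a product of two integrals, the product rule inevitably produces a combination of products of integrals, not a single integral. Your proposal anticipates ``a pointwise nonnegative quadratic expression'', and that is where the plan would fail. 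What is actually needed is the global H\"older-type inequality
\[
\int f\cdot\int f^2\ \le\ \mu(X)\int f^3
\]
for nonnegative $f$ on a finite measure space (with equality iff $f$ is constant), applied to $f=\mathcal{H}^2$; this is precisely where the hypothesis $\mathcal{H}^2\ge 0$ enters. The equality case is then read off from the equality case of this H\"older step (forcing $\mathcal{H}^2$ constant) together with the gradient inequality above, rather than from a pointwise vanishing argument. The endgame via scaling invariance and convergence of rescaled null mean curvature flow to an STCMC limit is as in the paper.
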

	\begin{bem}\label{bem_tracefreeA1}\,
		\begin{enumerate}
			\item[(i)] Using the evolution equation for $\mathcal{H}^2$ as computed in \cite[Lemma 8]{wolff1}, we have $\mathcal{H}^2>0$ for any $t>0$ by the strong maximum principle if $\mathcal{H}^2\ge 0$ for $\Sigma_0$, as we can rule out $\mathcal{H}^2\equiv0$ by Gauss--Bonnet. Thus, we may entirely rely on the special case proven independently by Hamilton \cite{hamilton1}, and the author \cite{wolff1}. However, as we may also rely on the more general result \cite{chow1, bartzstruweye, struwe, andrewsbryan}, we merely require the assumption $\mathcal{H}^2\ge 0$ for the application of Lemma \ref{lem_hoelder}, see below.
			\item[(ii)] The strategy presented here is motivated by a short, unpublished prove by Huisken of the inequality
			\[
			\norm{h-\frac{\fint H}{2}\gamma}_{L^2(\Sigma)}\le 2\newnorm{\accentset{\circ}{h}}_{L^2(\Sigma)}
			\]
			for strictly starshaped surfaces $\Sigma$ in $\R^3$ with $H>0$ using a result by Gerhardt \cite{gerhardt} for \emph{inverse mean curvature flow}, where $2$ is indeed the optimal constant in this case.
			\item[(ii)] Along a null hypersurface, inverse mean curvature flow is defined as the projection of codimension-$2$ inverse mean curvature flow in an ambient spacetime $(\mathfrak{M},\mathfrak{g})$ to the null hypersurface, i.e.,
			\[
			\frac{\d}{\d t}x=-\frac{1}{2}\frac{\mathfrak{g}(\vec{\mathcal{H}},L)}{\mathcal{H}^2}\ul{L}=\frac{1}{2\ul{\theta}}\ul{L},
			\]
			which is well-defined as long as $\ul{\theta}\not=0$ along the null hypersurface. As $\ul{\theta}=\frac{2}{\omega}$ for any spacelike cross section $\Sigma_\omega$ of the standard Minkowski lightcone by Proposition \ref{prop_minkowskilightcone}, inverse mean curvature flow is given as an ordinary differential equation rather than a parabolic system. More explicitly, for any spacelike cross section $\Sigma_\omega$ along the standard lightcone, the solution of inverse mean curvature flow is given by the smooth family $\{\Sigma_{\omega(s)}\}$ with 
			\[
			\omega(s)=\omega e^{\frac{s}{2}}.
			\]
			In particular, any scaling invariant quantity remains unchanged under the flow, and it is thus not suitable to prove Theorem \ref{thm_tracefreeA1}.
		\end{enumerate}
	\end{bem}
	Before proving Theorem \ref{thm_tracefreeA1}, we establish the following auxiliary lemma following from the H\"older inequality:
	\begin{lem}\label{lem_hoelder}
		Let $(X,\mu)$ be a finite measure space, and $f$ a bounded, non-negative, measurable function with $\int f\d\mu>0$. Then
		\[
		\int f\int f^2\le \mu(X)\int f^3
		\]
		with equality if and only if $f$ is constant.
	\end{lem}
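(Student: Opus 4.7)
The plan is to prove this Chebyshev-type inequality using the classical symmetrization trick. Specifically, I would consider the double integral
\[
I \definedas \int_X\int_X \bigl(f(x)-f(y)\bigr)\bigl(f(x)^2-f(y)^2\bigr)\d\mu(x)\d\mu(y),
\]
expand it, and show that it equals $2\mu(X)\int f^3\d\mu - 2\int f\d\mu\int f^2\d\mu$, so that nonnegativity of $I$ gives the desired inequality.

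The key observation is the factorization
\[
\bigl(f(x)-f(y)\bigr)\bigl(f(x)^2-f(y)^2\bigr)=\bigl(f(x)-f(y)\bigr)^2\bigl(f(x)+f(y)\bigr),
\]
which is pointwise nonnegative since $f\ge 0$. This immediately yields $I\ge 0$, hence the inequality. Expanding $I$ is a short bookkeeping computation using Fubini, where the ``cross terms'' $\int f(x)f(y)^2\d\mu(x)\d\mu(y)$ factor as products of single integrals and combine with the diagonal terms $\int f(x)^3\d\mu(x)\mu(X)$ and $\int f(y)^3\d\mu(y)\mu(X)$.

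For the equality case, $I=0$ forces the integrand $\bigl(f(x)-f(y)\bigr)^2\bigl(f(x)+f(y)\bigr)$ to vanish for $(\mu\otimes\mu)$-a.e. $(x,y)$. The mildly delicate step, which I expect to be the only subtle point, is ruling out the degenerate possibility that $f$ takes both a positive value on a set of positive measure and the value $0$ on another set of positive measure: for any pair $(x,y)$ with $f(x)=c>0$ and $f(y)=0$, the integrand equals $c^3>0$, so by Fubini such pairs must form a $(\mu\otimes\mu)$-null set, forcing $f>0$ $\mu$-a.e. (using the assumption $\int f\d\mu>0$ to exclude $f\equiv 0$). On the set where $f>0$, vanishing of $\bigl(f(x)-f(y)\bigr)^2\bigl(f(x)+f(y)\bigr)$ then forces $f(x)=f(y)$ a.e., so $f$ is constant $\mu$-a.e., as claimed.

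Alternatively, the same inequality can be extracted by combining two Cauchy--Schwarz estimates, namely $\bigl(\int f\bigr)^2\le \mu(X)\int f^2$ and $\bigl(\int f^2\bigr)^2\le \int f\int f^3$ (the latter from $f^2=f^{1/2}\cdot f^{3/2}$), multiplying them and dividing by $\int f^2>0$; but the symmetrization route above makes the equality case transparent in one pass, so I would prefer it.
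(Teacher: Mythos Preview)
Your proof is correct. Your primary route via the symmetrization integral
\[
I=\int_X\int_X\bigl(f(x)-f(y)\bigr)^2\bigl(f(x)+f(y)\bigr)\d\mu(x)\d\mu(y)\ge 0
\]
is a genuinely different argument from the paper's: the paper does precisely what you list as your \emph{alternative}, namely chain the two Cauchy--Schwarz (H\"older) inequalities $\bigl(\int f\bigr)^2\le \mu(X)\int f^2$ and $\bigl(\int f^2\bigr)^2\le \int f\int f^3$ and cancel. What your symmetrization buys is a transparent one-step treatment of the equality case; the paper does not spell out the equality discussion at all and relies implicitly on the equality conditions in H\"older, which here would require combining two separate rigidity statements. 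Conversely, the paper's chain is a one-line computation for the inequality itself. Both are entirely elementary, so neither is materially stronger; it is worth noting, though, that your ``alternative'' paragraph already contains the paper's proof verbatim.
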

	\begin{proof}
		Using the H\"older inequality twice, we get
		\begin{align*}
		\int f\int f^2\le \frac{\mu(X)}{\int f}\left(\int f^2\right)^2\le \frac{\mu(X)}{\int f}\int f\int f^3.
		\end{align*}
	\end{proof}
	\begin{proof}[Proof of Theorem \ref{thm_tracefreeA1}]
		Using the evolution equations as computed in \cite[Lemma 8]{wolff1}, we find
		\begin{align}
		\begin{split}\label{eq_evolutioneqations1}
		\frac{\,\operatorname{d}}{\,\operatorname{d}t}\mathcal{H}^2&=\Delta\mathcal{H}^2+\frac{1}{2}\left(\mathcal{H}^2\right)^2,\\
		\frac{\d}{\d t}\newbtr{\accentset{\circ}{A}}^2&=\Delta\newbtr{\accentset{\circ}{A}}^2-2\left(\btr{\nabla A}^2-\frac{1}{2}\btr{\nabla\mathcal{H}^2}^2\right)\le \Delta\newbtr{\accentset{\circ}{A}}^2-\frac{1}{2}\btr{\nabla \mathcal{H}^2}^2,
		\end{split}
		\end{align}
		using Proposition \ref{prop_codazziminkowski2}, Equation \eqref{eq_Agradientestimate}.
		Moreover, as $\eta(\vec{\mathcal{H}},L)=-\frac{1}{2}\theta$ by \eqref{eq_secondffnulldecomp} we find
		\[
		\frac{\d}{\d t}\d\mu_\gamma=-\frac{1}{2}\mathcal{H}^2\d\mu_\gamma
		\]
		by the Raychaudhuri equations, cf. \cite[Lemma 4 (i)]{wolff1}. A direct computation yields
		\begin{align*}
		\frac{\d }{\d t}\left(
		\btr{\Sigma}\int\frac{1}{2}(\mathcal{H}^2)^2-2\newbtr{\accentset{\circ}{A}}^2\right)
		\ge& \int\mathcal{H}^2\int\newbtr{\accentset{\circ}{A}}^2-\frac{1}{4}\int\mathcal{H}^2\int(\mathcal{H}^2)^2\\
		&+\,\btr{\Sigma}\int \frac{1}{2}\Delta(\mathcal{H}^2)^2-\btr{\nabla\mathcal{H}^2}^2+\frac{1}{2}(\mathcal{H}^2)^3-2\Delta \newbtr{\accentset{\circ}{A}}^2+\btr{\nabla \mathcal{H}^2}^2\\
		&\,-\btr{\Sigma}\int\frac{1}{2}\mathcal{H}^2\left(\frac{1}{2}(\mathcal{H}^2)^3-2\newbtr{\accentset{\circ}{A}}^2\right)\\
		=&\int\mathcal{H}^2\int \newbtr{\accentset{\circ}{A}}^2
		+\btr{\Sigma}\int\mathcal{H}^2\newbtr{\accentset{\circ}{A}}^2\\
		&+\frac{1}{2}\left(\btr{\Sigma}\int(\mathcal{H}^2)^3-\int\mathcal{H}^2\int(\mathcal{H}^2)^2\right)
		\ge 0,
		\end{align*}
		where we used Lemma \ref{lem_hoelder} in the last line, as all assumptions are satisfied by Gauss--Bonnet. Thus, we have proven the monotonicity.
		
		Note that $\btr{\Sigma}\int\left(\frac{1}{2}(\mathcal{H}^2)^2-2\newbtr{\accentset{\circ}{A}}^2\right)$ is scaling invariant, thus the monotonicity is also satisfied for any conformally equivalent flow, in particular for the flow rescaled to keep the area constant. Then the convergence to a round limit with $\mathcal{H}^2=\operatorname{const.}$ and $\newbtr{\accentset{\circ}{A}}^2=0$ follows by \cite[Theorem 12]{wolff1}. Hence,
		\[
		\btr{\Sigma}\int\frac{1}{2}(\mathcal{H}^2)^2-2\newbtr{\accentset{\circ}{A}}^2\le 128\pi^2=\frac{1}{2}\left(\int\mathcal{H}^2\right)^2,
		\]
		where the second identity holds by Gauss--Bonnet. Lastly, equality is achieved if and only if 
		\[
		\frac{\d }{\d t}\left(
		\btr{\Sigma}\int\frac{1}{2}(\mathcal{H}^2)^2-2\newbtr{\accentset{\circ}{A}}^2\right)=0,
		\]
		so by Proposition \ref{prop_codazziminkowski2} and Lemma \ref{lem_hoelder} if $\{\Sigma_t\}$ is a family of shrinking STCMC surfaces.
	\end{proof}
	From this, we obtain the desired estimate:
	\begin{thm}\label{thm_tracefreeA2}
		Let $(\Sigma,\gamma)$ be a spacelike cross section of the standard lightcone in the $1+3$-Minkowski spacetime with $\mathcal{H}^2\ge 0$. Then, we have that
		\[
		\btr{\Sigma}\int_\Sigma \btr{A-\frac{\fint\mathcal{H}^2}{2}\gamma}^2\le 3\btr{\Sigma} \int_\Sigma\newbtr{\accentset{\circ}{A}}^2,
		\]
		and equality holds if and only if $\Sigma$ is a surface of constant spacetime mean curvature.
	\end{thm}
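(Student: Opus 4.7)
The plan is to reduce the claim directly to Theorem \ref{thm_tracefreeA1}, which already encapsulates the hard analytic work carried out via null mean curvature flow. The bridge is a purely algebraic identity valid in two dimensions: since $\tr_\gamma A = \mathcal{H}^2$, the pointwise trace decomposition of the symmetric $(0,2)$-tensor $A$ reads
\[
\newbtr{A}^2 = \newbtr{\accentset{\circ}{A}}^2 + \frac{1}{2}(\mathcal{H}^2)^2.
\]

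First, I would expand the integrand on the left-hand side pointwise. Using $\spann{A,\gamma}_\gamma = \tr_\gamma A = \mathcal{H}^2$ and $\newbtr{\gamma}^2 = 2$, this yields
\[
\btr{A - \tfrac{1}{2}(\fint \mathcal{H}^2)\gamma}^2 = \newbtr{\accentset{\circ}{A}}^2 + \frac{1}{2}(\mathcal{H}^2)^2 - (\fint\mathcal{H}^2)\mathcal{H}^2 + \frac{1}{2}(\fint\mathcal{H}^2)^2.
\]
Integrating over $\Sigma$ and combining the two cross terms via $\fint \mathcal{H}^2 = \btr{\Sigma}^{-1}\int_\Sigma \mathcal{H}^2$ gives
\[
\int_\Sigma \btr{A - \tfrac{1}{2}(\fint\mathcal{H}^2)\gamma}^2 = \int_\Sigma \newbtr{\accentset{\circ}{A}}^2 + \frac{1}{2}\int_\Sigma (\mathcal{H}^2)^2 - \frac{1}{2\btr{\Sigma}}\left(\int_\Sigma \mathcal{H}^2\right)^2.
\]

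After multiplying through by $\btr{\Sigma}$, the last two terms are precisely the quantity controlled in Theorem \ref{thm_tracefreeA1}: since $\mathcal{H}^2 \ge 0$ by assumption, that theorem yields
\[
\frac{\btr{\Sigma}}{2}\int_\Sigma (\mathcal{H}^2)^2 - \frac{1}{2}\left(\int_\Sigma \mathcal{H}^2\right)^2 \le 2\btr{\Sigma}\int_\Sigma \newbtr{\accentset{\circ}{A}}^2,
\]
and adding the remaining $\btr{\Sigma}\int_\Sigma \newbtr{\accentset{\circ}{A}}^2$ produces exactly the claimed factor of $3$. For the rigidity statement, if $\Sigma$ is STCMC then $\accentset{\circ}{A} \equiv 0$ and $\mathcal{H}^2$ is constant, so both sides vanish identically; conversely, equality in the final estimate forces equality in the bound above, and hence in Theorem \ref{thm_tracefreeA1}, which by its characterization of the extremal case (applied to the flow starting at $\Sigma$) forces $\Sigma$ itself to be STCMC. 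I do not anticipate any real obstacle here: all the analytic content has been packaged into Theorem \ref{thm_tracefreeA1}, and the present statement is essentially an algebraic repackaging of that scaling-invariant monotone quantity into a form involving $A - \tfrac{1}{2}\fint\mathcal{H}^2\gamma$.
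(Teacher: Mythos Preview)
Your argument is correct and is essentially the same as the paper's: both expand the left-hand side algebraically, identify it (via $\newbtr{A}^2=\newbtr{\accentset{\circ}{A}}^2+\tfrac{1}{2}(\mathcal{H}^2)^2$ and Gauss--Bonnet) with the quantity $\btr{\Sigma}\int_\Sigma\newbtr{A}^2-\tfrac{1}{2}\left(\int_\Sigma\mathcal{H}^2\right)^2$, and then invoke Theorem~\ref{thm_tracefreeA1} for the null mean curvature flow starting at $\Sigma$ (noting, as the paper does explicitly, that $\mathcal{H}^2\ge 0$ is preserved along the flow by the strong maximum principle). The rigidity discussion likewise matches the paper's reasoning.
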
\pagebreak
	\begin{bem}\label{bem_tracefreeA2}\,
		\begin{enumerate}
			\item[(i)] Using the Gauss equation \eqref{eq_gaußcurvature}, we see that in fact
			\[
			\frac{1}{2}\fint\mathcal{H}^2=\frac{2}{r^2}
			\]
			by Gauss--Bonnet, where $r:=\sqrt{\frac{\btr{\Sigma}}{4\pi}}$ is the area radius of $\Sigma$. We have thus proven equivalently that
			\begin{align}\label{eq_tracefreeA2}
			\btr{\Sigma}\int_\Sigma \btr{A-\frac{2}{r^2}\gamma}^2\le 3\btr{\Sigma} \int_\Sigma\newbtr{\accentset{\circ}{A}}^2.
			\end{align}
			\item[(ii)] The constant $C=3$ is not optimal. See Theorem \ref{thm_tracefreeA3} below.
		\end{enumerate}
	\end{bem}
	\begin{proof}[Proof of Theorem \ref{thm_tracefreeA2}]
		We first rewrite the left-hand side as
		\begin{align*}
		\btr{\Sigma}\int_\Sigma \btr{A-\frac{\fint\mathcal{H}^2}{2}\gamma}^2
		&=\btr{\Sigma}\int_\Sigma\left(\btr{A}^2-\left(\fint\mathcal{H}^2\right)\mathcal{H}^2+\frac{1}{2}\left(\fint\mathcal{H}^2\right)^2\right)\\
		&=\btr{\Sigma}\int_\Sigma\btr{A}^2-\frac{1}{2}\left(\int_\Sigma\mathcal{H}^2\right)^2\\
		&=\btr{\Sigma}\int_\Sigma\btr{A}^2-128\pi^2
		\end{align*}
		By the strong maximum principle $\mathcal{H}^2\ge 0$ on $\Sigma$ is preserved under null mean curvature flow with initial data $\Sigma_0=\Sigma$. Hence, the claim follows from Theorem~\ref{thm_tracefreeA1}.
	\end{proof}
	Equivalenty, we have proven the following Corollary.
	\begin{kor}\label{kor_tracefreeA2}
		If $\mathcal{H}^2\ge 0$,
		\[
		\btr{\Sigma}\norm{\mathcal{H}^2-\fint\mathcal{H}^2}^2_{L^2(\Sigma)}\le 4\btr{\Sigma}\newnorm{\accentset{\circ}{A}}_{L^2(\Sigma)}^2.
		\]
	\end{kor}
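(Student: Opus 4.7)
The corollary should follow essentially by algebraic rearrangement of Theorem \ref{thm_tracefreeA2}, so the plan is to massage the left-hand side of that theorem into a sum of the two quantities appearing in the corollary and discard the trace-free part on the right.

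The key identity is the orthogonal decomposition of $A - c\gamma$ for any constant $c \in \R$. Writing $A = \accentset{\circ}{A} + \tfrac{\mathcal{H}^2}{2}\gamma$ and using that $\accentset{\circ}{A}$ is trace-free (so $\langle \accentset{\circ}{A},\gamma\rangle_\gamma=0$) together with $\btr{\gamma}^2_\gamma = 2$, one obtains
\begin{align*}
\btr{A - c\gamma}^2 \;=\; \newbtr{\accentset{\circ}{A}}^2 + 2\left(\tfrac{\mathcal{H}^2}{2} - c\right)^{\!2} \;=\; \newbtr{\accentset{\circ}{A}}^2 + \tfrac{1}{2}\bigl(\mathcal{H}^2 - 2c\bigr)^{2}.
\end{align*}
Choosing $c = \tfrac{1}{2}\fint \mathcal{H}^2$ — which is exactly the constant appearing on the left-hand side of Theorem \ref{thm_tracefreeA2} — this becomes
\begin{align*}
\Big\lvert A - \tfrac{\fint\mathcal{H}^2}{2}\gamma\Big\rvert^{2} \;=\; \newbtr{\accentset{\circ}{A}}^2 + \tfrac{1}{2}\bigl(\mathcal{H}^2 - \fint \mathcal{H}^2\bigr)^{2}.
\end{align*}

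Integrating over $\Sigma$, multiplying by $\btr{\Sigma}$, and applying Theorem \ref{thm_tracefreeA2} (whose hypothesis $\mathcal{H}^2 \geq 0$ is exactly what the corollary also assumes) yields
\begin{align*}
\btr{\Sigma}\newnorm{\accentset{\circ}{A}}_{L^2(\Sigma)}^{2} + \tfrac{1}{2}\btr{\Sigma}\norm{\mathcal{H}^2 - \fint \mathcal{H}^2}^{2}_{L^2(\Sigma)} \;\le\; 3\,\btr{\Sigma}\newnorm{\accentset{\circ}{A}}_{L^2(\Sigma)}^{2}.
\end{align*}
Subtracting the $\accentset{\circ}{A}$ term from both sides and multiplying by $2$ gives exactly the desired inequality
\[
\btr{\Sigma}\norm{\mathcal{H}^2 - \fint \mathcal{H}^2}^{2}_{L^2(\Sigma)} \;\le\; 4\,\btr{\Sigma}\newnorm{\accentset{\circ}{A}}_{L^2(\Sigma)}^{2}.
\]

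There is no real obstacle here — the content of the corollary is entirely packaged inside Theorem \ref{thm_tracefreeA2}, and the only step one must be careful with is the pointwise orthogonal decomposition in the first display. The $L^2$-norm of the trace-free part ``absorbs'' two of the three units on the right of Theorem \ref{thm_tracefreeA2}, leaving a factor of two for the $\mathcal{H}^2$-oscillation, which after multiplying by $2$ produces the stated constant $4$.
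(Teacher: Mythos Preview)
Your proof is correct and is exactly the computation the paper has in mind: it states the corollary with the sentence ``Equivalently, we have proven the following Corollary'' immediately after Theorem~\ref{thm_tracefreeA2}, without further argument. Your pointwise orthogonal decomposition $\bigl|A-\tfrac{1}{2}\fint\mathcal{H}^2\,\gamma\bigr|^2=\newbtr{\accentset{\circ}{A}}^2+\tfrac{1}{2}\bigl(\mathcal{H}^2-\fint\mathcal{H}^2\bigr)^2$ is precisely what makes the two inequalities equivalent.
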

	We close this subsection by a short proof of an improved gradient estimate for solutions of null mean curvature flow with $\mathcal{H}^2>0$, cf. \cite[Theorem 16]{wolff1}.
	\begin{thm}\label{thm_gradientestiamte}
		Let $\{\Sigma_t\}_{t\in[0,T_{max})}$ be a family of spacelike cross sections of the standard Minkowski lightcone with strictly positive spacetime mean curvature $\mathcal{H}^2>0$ evolving under null mean curvature flow. For any $p>\frac{1}{2}$, $\eta>0$, there exists $C_\eta>0$ only depending on $\eta$, $p$ and $\Sigma_0$, such that
		\begin{align*}
		\left\vert \nabla \mathcal{H}^2 \right\vert\le \eta^2(\mathcal{H}^2)^{p}+C_\eta.
		\end{align*}
		for all $t\in[0,T_{max})$.
	\end{thm}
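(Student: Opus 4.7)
The strategy is to apply a parabolic maximum principle to an auxiliary function built from $|\nabla \mathcal{H}^2|^2$ and powers of $\mathcal{H}^2$, combined with a Young-inequality interpolation to cover the full range $p > 1/2$. Writing $u := \mathcal{H}^2 > 0$ and using the evolution equations from \cite[Lemma 8]{wolff1} along null mean curvature flow, namely $\partial_t u = \Delta u + \tfrac{1}{2} u^2$ together with the induced metric evolution $\partial_t \gamma = -\tfrac{1}{2} u\, \gamma$ (recall \eqref{eq_evolutioneqations1} and the Raychaudhuri computation already used in the proof of Theorem \ref{thm_tracefreeA1}), and invoking the Bochner identity on the evolving 2-surface with the Gauss equation \eqref{eq_gaußcurvature} in the form $K = u/4$, a direct calculation yields
\begin{align*}
(\partial_t - \Delta)|\nabla u|^2 &= 2 u |\nabla u|^2 - 2|\nabla^2 u|^2,\\
(\partial_t - \Delta) u^{2p_0} &= p_0\, u^{2p_0 + 1} - 2 p_0 (2p_0 - 1) u^{2p_0 - 2} |\nabla u|^2.
\end{align*}

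By Young's inequality, it suffices to produce a single base estimate $|\nabla u|^2 \le A u^{2p_0} + B$ for some fixed $p_0 > 1/2$ close to $1/2$, with constants $A, B$ depending only on $\Sigma_0$ and $p_0$: once this is in hand, the interpolation $A u^{2p_0} \le \eta^4 u^{2p} + C(\eta, p, A)$ immediately delivers the statement for every $p > p_0$ and every $\eta > 0$. To obtain the base estimate I would test the auxiliary function $\Phi := |\nabla u|^2 - A u^{2p_0}$ at a space maximum $x_0 \in \Sigma_t$. There the vanishing of $\nabla \Phi$ expresses $\nabla |\nabla u|^2 = 2 p_0 A u^{2p_0 - 1}\nabla u$, which via Kato's inequality produces the pointwise lower bound $|\nabla^2 u|^2(x_0) \ge A^2 p_0^2 u^{4p_0 - 2}$. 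The threshold $p_0 > 1/2$ enters precisely here: only when $4 p_0 - 2 > 0$ does this Bochner--Kato contribution actually grow with $u$ and become available for absorbing error terms.

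The main obstacle is that the pure pointwise balancing does not close by itself for $p_0$ below the parabolic scaling threshold $p = 3/2$: after substituting $|\nabla u|^2 = \Phi + A u^{2p_0}$, the bad reaction contribution $2 u |\nabla u|^2 \sim 2 A u^{2p_0 + 1}$ only partially cancels against $-A p_0 u^{2p_0+1}$, and the Bochner--Kato term $A^2 p_0^2 u^{4p_0-2}$ grows strictly more slowly than $u^{2p_0+1}$ whenever $p_0 < 3/2$. To descend to $p_0$ just above $1/2$, I would enlist the global integral information available: the $L^2$ control on $\accentset{\circ}{A}$ along the flow given by Theorem \ref{thm_tracefreeA1} (equivalently, the $L^2$ bound on $u - \fint u$ from Corollary \ref{kor_tracefreeA2}), the Gauss--Bonnet identity $\int u \,\d\mu = 16\pi$, and the scale invariance of the problem under the associated rescaling to fixed area. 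Concretely, after this rescaling I would integrate $(\partial_t - \Delta)\Phi$ against a truncation of the positive part $\Phi_+$, use Hölder together with the integral bounds to close an $L^q$ estimate on $\Phi_+$, and then upgrade to the desired pointwise bound by a Moser-type iteration. Verifying that all constants arising in this scheme depend only on $\Sigma_0$ (and not on $t$ or on the a priori unknown $T_{\max}$), and verifying that the metric-evolution corrections entering the Bochner computation are indeed absorbed by the leading good terms, constitute the delicate part of the argument.
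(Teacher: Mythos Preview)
Your setup is sound: the evolution equation $(\partial_t-\Delta)|\nabla u|^2=2u|\nabla u|^2-2|\nabla^2u|^2$ is correct, and the final Young interpolation (reducing to a single base estimate at some $p_0$ close to $1/2$) is exactly the right way to finish. The gap is in how you obtain that base estimate. You correctly diagnose that a bare comparison of $|\nabla u|^2$ against $Au^{2p_0}$ does not close pointwise for $p_0<3/2$, and then you propose to rescue the argument via integral bounds and a Moser iteration---but you do not carry this out, and it is far from clear that the scheme closes with constants independent of $t$.

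The paper bypasses this difficulty entirely with a much simpler device that you are not using: the pointwise evolution inequality for $\newbtr{\accentset{\circ}{A}}^2$ from \eqref{eq_evolutioneqations1},
\[
(\partial_t-\Delta)\newbtr{\accentset{\circ}{A}}^2\le-\tfrac{1}{2}|\nabla u|^2.
\]
Computing directly the evolution of $|\nabla u|^2/u$ (i.e.\ the borderline case $p_0=\tfrac12$), one finds after Cauchy--Schwarz and Young
\[
(\partial_t-\Delta)\frac{|\nabla u|^2}{u}\le \tfrac{3}{2}|\nabla u|^2,
\]
so that the combination $\dfrac{|\nabla u|^2}{u}+3\newbtr{\accentset{\circ}{A}}^2$ is a supersolution of the heat equation on the evolving surface. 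The scalar maximum principle then gives $|\nabla u|^2\le C(\Sigma_0)\,u$ for all $t$, which is precisely the base estimate at $p_0=\tfrac12$, and your Young step finishes the proof. The point you missed is that the ``bad'' reaction term $\tfrac32|\nabla u|^2$ need not be absorbed by Hessian or Kato terms at all---it is eaten by the good gradient term hidden in the evolution of $\newbtr{\accentset{\circ}{A}}^2$. No integral estimates or iteration are needed.
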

	\begin{proof}
		As $\btr{\nabla\mathcal{H}^2}^2=\gamma^{ij}\partial_i\mathcal{H}^2\partial_j\mathcal{H}^2$, a straightforward computation using \eqref{eq_evolutioneqations1} gives
		\begin{align*}
		\frac{\d}{\d t}\btr{\nabla\mathcal{H}^2}^2=2\spann{\nabla\frac{\d}{\d t}\mathcal{H}^2,\nabla\mathcal{H}^2}+\frac{1}{2}\mathcal{H}^2\btr{\nabla\mathcal{H}^2}^2
		=2\spann{\nabla\Delta\mathcal{H}^2,\nabla\mathcal{H}^2}+\frac{5}{2}\mathcal{H}^2.
		\end{align*}
		Using the Bochner formula
		\begin{align}\label{eq_bochner}
		\Delta\btr{\nabla f}^2=2\spann{\nabla\Delta f,\nabla f}+2\btr{\Hess f}^2+2\Ric_\gamma(\nabla f,\nabla f)
		\end{align}
		for any smooth function $f$, and the fact that $\Ric_\gamma=\frac{1}{4}\mathcal{H}^2\gamma$ by the Gauss formula \eqref{eq_gaußcurvature}, we conclude that
		\[
		\frac{\d }{\d t}\btr{\nabla\mathcal{H}^2}^2=\Delta\btr{\nabla\mathcal{H}^2}^2-2\btr{\Hess\,\mathcal{H}^2}^2+2\mathcal{H}^2\btr{\nabla\mathcal{H}^2}^2.
		\]
		We note that this is consistent with the evolution of the gradient of the scalar curvature under $2d$-Ricci flow, as we would expect. Then, direct computation gives
		\begin{align*}
		\frac{\d }{\d t}\frac{\btr{\nabla\mathcal{H}^2}^2}{\mathcal{H}^2}
		&=\Delta\frac{\btr{\nabla\mathcal{H}^2}^2}{\mathcal{H}^2}+2\frac{\spann{\nabla \btr{\nabla\mathcal{H}^2}^2,\nabla \mathcal{H}^2}}{(\mathcal{H}^2)^2}-2\frac{\btr{\nabla\mathcal{H}^2}^4}{(\mathcal{H}^2)^3}-2\frac{\btr{\Hess\,\mathcal{H}^2}^2}{\mathcal{H}^2}+\frac{3}{2}\btr{\nabla\mathcal{H}^2}^2\\
		&\le \Delta\frac{\btr{\nabla\mathcal{H}^2}^2}{\mathcal{H}^2} +\frac{3}{2}\btr{\nabla\mathcal{H}^2}^2
		\end{align*}
		using Cauchy--Schwarz and Young's inequality. By \eqref{eq_evolutioneqations1}
		\[
		\left(\frac{\d}{\d t}-\Delta\right)\left(\frac{\btr{\nabla\mathcal{H}^2}^2}{\mathcal{H}^2}+3\newbtr{\accentset{\circ}{A}}^2\right)\le 0
		\]
		and therefore any initial bound is preserved for the sum. Thus
		\[
		\frac{\btr{\nabla\mathcal{H}^2}^2}{\mathcal{H}^2}\le \frac{\btr{\nabla\mathcal{H}^2}^2}{\mathcal{H}^2}+3\newbtr{\accentset{\circ}{A}}^2\le C(\Sigma_0).
		\]
		The desired estimate follows from multiplying by $\mathcal{H}^2$ and using Young's inequality.
	\end{proof}
	We recall that Chow \cite{chow1} directly extended the result by Hamilton \cite{hamilton1} in the conformally round case by showing that the scalar curvature has to become strictly positive under evolution of volume preserving $2d$-Ricci flow for sufficiently large times. As this flow is directly related to Ricci flow upon a suitable rescaling, any solution to $2d$-Ricci flow on $\mathbb{S}^2$ has strictly positive scalar curvature sufficiently close to the singular time. We thus obtain the following Corollary by the equivalence of the two flows in our setting:
	\begin{kor}
		Let $\{\Sigma_t\}_{t\in[0,T_{max})}$ be a family of closed, topological $2$-spheres evolving under Ricci flow. Then for all times $t$ sufficiently close to $T_{max}$, we have the following: For any $p>\frac{1}{2}$, $\eta>0$, there exists $C_\eta>0$ only depending on $\eta$, $p$ and $\Sigma_0$, such that
		\begin{align*}
			\left\vert \nabla \operatorname{R} \right\vert\le \eta^2\operatorname{R}^{p}+C_\eta.
		\end{align*}
	\end{kor}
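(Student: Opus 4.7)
The plan is to transfer the gradient estimate of Theorem \ref{thm_gradientestiamte} from null mean curvature flow to Ricci flow using the equivalence between the two flows already exploited throughout Section \ref{subsec_nullmeancurvatureflow}. The key point is that although a general Ricci flow on a topological $2$-sphere need not start with positive scalar curvature, it will have positive scalar curvature after some finite time, at which point we are precisely in the setting of Theorem \ref{thm_gradientestiamte}.

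First, I would invoke Chow's extension \cite{chow1} of Hamilton's theorem, stated immediately before the corollary: there exists a time $t^* = t^*(\Sigma_0) \in [0, T_{max})$ such that the scalar curvature $R$ of $(\Sigma_t, g_t)$ is strictly positive for all $t \in [t^*, T_{max})$. (By the strong maximum principle applied to $\partial_t R = \Delta R + R^2$, positivity is preserved once achieved.)

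Second, I would realize the restricted Ricci flow $\{(\Sigma_t, g_t)\}_{t \in [t^*, T_{max})}$ as a family of spacelike cross sections of the standard Minkowski lightcone evolving under null mean curvature flow. By the uniformization theorem every metric on $\Sigma_t$ is conformally round, hence there exists a conformal factor $\omega(t)$ with $g_t = \omega(t)^2 \, \d\Omega^2$ up to pullback by a diffeomorphism of $\Sbb^2$; by Subsection \ref{subsec_prelim_nullgeom} this realizes $(\Sigma_t,g_t)$ as a spacelike cross section $\Sigma_{\omega(t)}$ of the standard lightcone. The equivalence of $2d$-Ricci flow and null mean curvature flow established in \cite[Section 4]{wolff1} then implies that $\{\Sigma_{\omega(t)}\}_{t \in [t^*, T_{max})}$ solves null mean curvature flow, and the Gauss equation \eqref{eq_gaußcurvature} yields
\[
\mathcal{H}^2 = 2R > 0 \qquad \text{on } [t^*, T_{max}).
\]

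Third, I would apply Theorem \ref{thm_gradientestiamte} to the family $\{\Sigma_{\omega(t)}\}_{t \in [t^*, T_{max})}$ with initial datum $\Sigma_{\omega(t^*)}$. For any $p > \tfrac{1}{2}$ and $\eta > 0$ this produces a constant $\widetilde{C}_\eta$ depending only on $p$, $\eta$, and $\Sigma_{\omega(t^*)}$ such that
\[
|\nabla \mathcal{H}^2| \le \eta^2 (\mathcal{H}^2)^p + \widetilde{C}_\eta
\]
on $[t^*, T_{max})$. Substituting $\mathcal{H}^2 = 2R$, dividing by $2$, and replacing $\eta$ by $\eta/\sqrt{2^{p-1}}$ (absorbing the change into the additive constant) gives
\[
|\nabla R| \le \eta^2 R^p + C_\eta
\]
with $C_\eta$ depending only on $p$, $\eta$, and $\Sigma_{\omega(t^*)}$, hence ultimately on $\Sigma_0$ since both $t^*$ and $g_{t^*}$ are determined by $\Sigma_0$.

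The main obstacle I anticipate is purely bookkeeping: ensuring that the constants track only back to $\Sigma_0$ (and not to the auxiliary surface $\Sigma_{\omega(t^*)}$), and that the normalization between $2d$-Ricci flow as considered here and the null mean curvature flow on the lightcone is kept consistent. Both are routine once the reduction to Theorem \ref{thm_gradientestiamte} is in place.
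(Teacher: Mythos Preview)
Your proposal is correct and follows exactly the route the paper takes: invoke Chow's result to ensure $R>0$ for $t$ sufficiently close to $T_{\max}$, then use the equivalence between $2d$-Ricci flow and null mean curvature flow together with $\mathcal{H}^2=2R$ to read off the estimate from Theorem~\ref{thm_gradientestiamte}. The paper states this as an immediate consequence without spelling out the constant-tracking you describe, but your additional bookkeeping is routine and accurate.
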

\subsection{A proof using the Bochner formula}\label{subsec_bochner}
	We now give a different proof for the desired estimate \eqref{eq_tracefreeA}, by improving Corollary \ref{kor_tracefreeA2}. This will in fact yield the optimal estimate.
	\begin{prop}\label{thm_tracefreeA3}
		If $\mathcal{H}^2\ge 0$,
		\[
		\btr{\Sigma}\norm{\mathcal{H}^2-\fint\mathcal{H}^2}^2_{L^2(\Sigma)}\le 2\btr{\Sigma}\newnorm{\accentset{\circ}{A}}_{L^2(\Sigma)}^2,
		\]
		with equality if and only if $\Sigma$ is an STCMC surface.
	\end{prop}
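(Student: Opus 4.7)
The plan is to mimic the proof strategy of De\,Lellis--Topping's almost-Schur lemma \cite{Delellistopping}, letting the scalar second fundamental form $A$ play the role that the Ricci tensor plays in their argument. This analogy is driven by two observations: (i) the relation $\tr_\gamma A = \mathcal{H}^2 = 2\operatorname{R}$ replaces the role of the scalar curvature, and (ii) the total symmetry of $\nabla A$ (Proposition \ref{prop_codazziminkowski2}) provides an analogue of the twice-contracted Bianchi identity. Specifically, contracting $\nabla_i A_{jk}=\nabla_k A_{ij}$ with $g^{ij}$ and using symmetry of $A$ gives $\dive A = \nabla\mathcal{H}^2$, and subtracting off the contribution of $\tfrac{\mathcal{H}^2}{2}\gamma$ yields the key divergence identity
\[
\dive \accentset{\circ}{A}=\tfrac{1}{2}\nabla\mathcal{H}^2.
\]

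First, I would solve the Poisson equation $\Delta u = \mathcal{H}^2 - \fint\mathcal{H}^2$ on the closed surface $(\Sigma,\gamma)$, which is solvable by standard elliptic theory since the right-hand side has vanishing mean. Two successive integrations by parts then give
\[
\int_\Sigma\left(\mathcal{H}^2-\fint\mathcal{H}^2\right)^2
= -\int_\Sigma\nabla u\cdot\nabla\mathcal{H}^2
= -2\int_\Sigma\nabla u\cdot\dive\accentset{\circ}{A}
= 2\int_\Sigma\langle\accentset{\circ}{A},\accentset{\circ}{\nabla^2 u}\rangle,
\]
where the last step uses that $\accentset{\circ}{A}$ is trace-free, so pairing with $\nabla^2 u$ automatically projects onto the trace-free Hessian. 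Cauchy--Schwarz bounds this by $2\|\accentset{\circ}{A}\|_{L^2}\|\accentset{\circ}{\nabla^2 u}\|_{L^2}$. To control the Hessian term, I would integrate the Bochner formula applied to $u$, invoke the $2$-dimensional decomposition $|\nabla^2 u|^2=|\accentset{\circ}{\nabla^2 u}|^2+\tfrac{1}{2}(\Delta u)^2$, and substitute $\Ric=\tfrac{\mathcal{H}^2}{4}\gamma$ (which follows from \eqref{eq_gaußcurvature} in dimension $2$), obtaining
\[
\int_\Sigma|\accentset{\circ}{\nabla^2 u}|^2 = \tfrac{1}{2}\int_\Sigma(\Delta u)^2 - \tfrac{1}{4}\int_\Sigma\mathcal{H}^2|\nabla u|^2.
\]
The assumption $\mathcal{H}^2\ge 0$ makes the last term non-negative, hence $\|\accentset{\circ}{\nabla^2 u}\|_{L^2}^2\le\tfrac{1}{2}\|\mathcal{H}^2-\fint\mathcal{H}^2\|_{L^2}^2$. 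Combining with the Cauchy--Schwarz bound and dividing through by $\|\mathcal{H}^2-\fint\mathcal{H}^2\|_{L^2}$ yields the claimed estimate after squaring and multiplying by $\btr{\Sigma}$.

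The most delicate step is the equality characterization. Equality throughout forces $\int_\Sigma\mathcal{H}^2|\nabla u|^2=0$; a priori, this would permit $\nabla u\not\equiv 0$ wherever $\mathcal{H}^2=0$. However, by Gauss--Bonnet $\int_\Sigma\mathcal{H}^2=16\pi>0$, so the open set $U\definedas\{\mathcal{H}^2>0\}$ is nonempty, and $\nabla u\equiv 0$ on $U$; hence $\Delta u\equiv 0$ and $\mathcal{H}^2\equiv\fint\mathcal{H}^2=:c>0$ on $U$. A continuity argument then rules out any interior boundary of $U$ in $\Sigma$, since any such boundary point would need to satisfy both $\mathcal{H}^2=c>0$ (as a limit from $U$) and $\mathcal{H}^2=0$; consequently $U=\Sigma$ and $\mathcal{H}^2\equiv\operatorname{const}$. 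Conversely, if $\Sigma$ is STCMC then $\accentset{\circ}{A}=0$ by Proposition \ref{prop_codazziminkowski2} and both sides of the inequality vanish, completing the characterization.
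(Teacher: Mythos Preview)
Your proof is correct and follows essentially the same route as the paper's own argument: both solve the Poisson equation $\Delta u=\mathcal{H}^2-\fint\mathcal{H}^2$, use the divergence identity $\dive\accentset{\circ}{A}=\tfrac12\nabla\mathcal{H}^2$ together with integration by parts and Cauchy--Schwarz, and then close the estimate via the Bochner formula with $\Ric=\tfrac14\mathcal{H}^2\gamma$; the equality discussion via the open set $\{\mathcal{H}^2>0\}$ and Gauss--Bonnet is likewise the same as the paper's $\operatorname{supp}\mathcal{H}^2$ argument. The only cosmetic omission is the trivial case $\mathcal{H}^2\equiv\fint\mathcal{H}^2$ before dividing through, which you implicitly cover in the converse direction anyway.
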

	\begin{bem}\label{bem_tracefreeA3} The proof is motivated by the work of DeLellis--Topping \cite{Delellistopping} on an almost-Schur lemma. In fact, one may view the above as a generalization of these inequalities to $n=2$ in the case of surfaces of genus $0$.
	\end{bem}
	\begin{proof}
		Using Proposition \ref{prop_codazziminkowski2} we conclude
		\begin{align}\label{eq_almostschur1}
		\dive \accentset{\circ}{A}=\frac{1}{2}\d \mathcal{H}^2.
		\end{align}
		Now consider the elliptic equation
		\begin{align}\label{eq_almostschur2}
		\Delta f=\mathcal{H}^2-\fint\mathcal{H}^2.
		\end{align}
		Then \eqref{eq_almostschur2} has a unique solution $f$ such that $\int f=0$, cf. \cite[Chapter 2.3]{sauter}. Integration by parts using \eqref{eq_almostschur1} and \eqref{eq_almostschur2} gives
		\begin{align*}
		\int \left(\mathcal{H}^2-\fint\mathcal{H}^2\right)^2&=-\int\spann{\nabla\mathcal{H}^2,\nabla f}\\
		&=-2\int\dive\accentset{\circ}{A}(\nabla f)\\
		&=2\int \spann{\accentset{\circ}{A},\Hess f}\\
		&=2\int \spann{\accentset{\circ}{A},\accentset{\circ}{\Hess }f}\\
		&\le 2\,\newnorm{\accentset{\circ}{A}}_{L^2}\cdot\newnorm{\accentset{\circ}{\Hess }f}_{L^2}
		\end{align*}
		Using the Bochner formula \eqref{eq_bochner} and recalling the fact that $\Ric_\gamma=\frac{1}{4}\mathcal{H}^2\gamma$, we find that
		\begin{align*}
		\newnorm{\accentset{\circ}{\Hess }f}_{L^2}^2=\int \btr{{\Hess }f}^2-\frac{1}{2}(\Delta f)^2
		=\int\frac{1}{2}(\Delta f)^2-\frac{1}{4}\mathcal{H}^2\btr{\nabla f}^2
		\le \frac{1}{2}\int \left(\mathcal{H}^2-\fint\mathcal{H}^2\right)^2.
		\end{align*}
		Combining both inequalities implies the estimate. It remains to show that $\mathcal{H}^2=\fint\mathcal{H}^2$ if equality is achieved. In the case of equality, the estimate involving the Bochner formula gives $f$ constant on $\operatorname{supp}\mathcal{H}^2$. Thus, $\mathcal{H}^2=\fint\mathcal{H}^2=\frac{4}{r^2}$ on $\operatorname{supp}\mathcal{H}^2$. By continuity of $\mathcal{H}^2$, we conclude that either $\operatorname{supp}\mathcal{H}^2=\emptyset$ or $\operatorname{supp}\mathcal{H}^2=\Sigma$. As $\operatorname{supp}\mathcal{H}^2\not=\emptyset$ by Gauss--Bonnet, we have $\mathcal{H}^2=\fint\mathcal{H}^2$ everywhere.
	\end{proof}
	We obtain the following, optimal estimate.
	\begin{thm}\label{thm_tracefreeA4}
		Let $(\Sigma,\gamma)$ be a spacelike cross section of the standard lightcone in the $3+1$-dimensional Minkowski spacetime with $\mathcal{H}^2\ge 0$. Then, we have that
		\[
		\btr{\Sigma}\int_\Sigma \btr{A-\frac{\fint\mathcal{H}^2}{2}\gamma}^2\le 2\btr{\Sigma} \int_\Sigma\newbtr{\accentset{\circ}{A}}^2,
		\]
		and equality holds if and only if $\Sigma$ is a surface of constant spacetime mean curvature. Moreover, the constant $2$ is optimal.
	\end{thm}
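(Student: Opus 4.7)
The plan is to reduce Theorem \ref{thm_tracefreeA4} directly to Proposition \ref{thm_tracefreeA3} via a pointwise algebraic decomposition of the integrand. First I would write $A = \accentset{\circ}{A} + \tfrac{1}{2}\mathcal{H}^2\gamma$ by definition of the trace-free part, so that
\[
A - \tfrac{\fint\mathcal{H}^2}{2}\gamma = \accentset{\circ}{A} + \tfrac{1}{2}\left(\mathcal{H}^2 - \fint\mathcal{H}^2\right)\gamma.
\]
Since $\tr_\gamma\accentset{\circ}{A} = 0$ kills the cross term and $\btr{\gamma}_\gamma^2 = 2$ on the two-dimensional surface $\Sigma$, this yields the pointwise identity
\[
\btr{A - \tfrac{\fint\mathcal{H}^2}{2}\gamma}^2 = \newbtr{\accentset{\circ}{A}}^2 + \tfrac{1}{2}\left(\mathcal{H}^2 - \fint\mathcal{H}^2\right)^2.
\]
Integrating against $\d\mu_\gamma$, multiplying by $\btr{\Sigma}$, and applying Proposition \ref{thm_tracefreeA3} to the second summand immediately yields the desired estimate with constant $2$.

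For the equality case, the decomposition above shows that equality in Theorem \ref{thm_tracefreeA4} is equivalent to equality in Proposition \ref{thm_tracefreeA3}, which forces $\Sigma$ to be STCMC; conversely, any STCMC surface trivially saturates both inequalities with both sides vanishing.

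The main obstacle is the sharpness claim, precisely because the equality case is degenerate. To show that the constant $2$ cannot be improved, I would construct a sequence of non-STCMC cross sections along which the ratio of the two sides tends to $2$. By the decomposition, this is equivalent to asymptotic saturation of Proposition \ref{thm_tracefreeA3}, i.e., finding a family $\{\Sigma_n\}$ with
\[
\frac{\btr{\Sigma_n}\int(\mathcal{H}^2 - \fint\mathcal{H}^2)^2}{\btr{\Sigma_n}\int\newbtr{\accentset{\circ}{A}}^2} \longrightarrow 2.
\]
A natural candidate is a one-parameter family of small perturbations $\omega_n = 1 + \varepsilon_n u_n$ of the round sphere, with $\varepsilon_n \to 0$ and $u_n$ chosen from a fixed eigenspace of $\Delta_{\Sbb^2}$. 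Linearizing the relations $\accentset{\circ}{A} \sim -4\accentset{\circ}{\Hess}_{\Sbb^2}u_n$ from Remark \ref{bem_codazzi} and $\mathcal{H}^2 - \fint\mathcal{H}^2 \sim c\,\Delta_{\Sbb^2}u_n$ reduces the ratio to a spectral quantity depending only on the eigenvalue $\lambda_\ell$ of $u_n$, and optimizing over $\ell$ in the spirit of the spectral computations of De\,Lellis--Topping should identify the extremizing harmonics and yield the required sharpness.
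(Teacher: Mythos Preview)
Your reduction of the inequality and the equality case to Proposition \ref{thm_tracefreeA3} via the pointwise orthogonal decomposition is correct and is exactly what the paper does (the paper phrases it as ``As before, the estimate is equivalent to Proposition \ref{thm_tracefreeA3}'', referring to the analogous computation in the proof of Theorem \ref{thm_tracefreeA2}); your version is in fact slightly cleaner since it avoids the detour through Gauss--Bonnet.

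For the sharpness, your strategy is also the paper's: perturb the round sphere by a spherical harmonic and expand to second order. Two small points deserve care. First, the paper works with $v_s=\omega_s^{-1}=1+sf$ rather than $\omega=1+\varepsilon u$, which makes the formulas $\accentset{\circ}{A}=4\omega\,\accentset{\circ}{\Hess}_{\Sbb^2}v$ and $\mathcal{H}^2=4v^2-\btr{{}^{\Sbb^2}\!\nabla v}^2+4v\Delta_{\Sbb^2}v$ exact rather than linearized; to leading order the two parametrizations agree, so this is cosmetic. Second, and more substantively, your phrase ``identify the extremizing harmonics'' is misleading: carrying out the second-order expansion gives, for an eigenfunction with eigenvalue $\lambda=\ell(\ell+1)$,
\[
\frac{\int(\mathcal{H}^2-\fint\mathcal{H}^2)^2}{\int\newbtr{\accentset{\circ}{A}}^2}\;\longrightarrow\;\frac{2(\lambda-1)(\lambda-3)}{\lambda(\lambda-2)}
\]
as $\varepsilon\to 0$, which is strictly less than $2$ for every fixed $\ell\ge 2$ and tends to $2$ only as $\ell\to\infty$. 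So no single eigenspace saturates; you need a diagonal sequence with $\ell_n\to\infty$. This is precisely what the paper encodes by showing that the second variation $F_C''(0)=64\pi\int f^2\bigl((C-2)\lambda^2+(8-2C)\lambda-6\bigr)$ is negative for any $C<2$ once $\lambda$ is taken large enough, and it is also how De\,Lellis--Topping argue. With that adjustment your plan goes through.
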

	\begin{proof}
		As before, the estimate is equivalent to Proposition \ref{thm_tracefreeA3}. It remains to show that the constant $C=2$ is optimal. We will argue similar as in \cite[Section 3]{Delellistopping}.
		Let $f$ be a non-constant eigenfunction of the Laplace operator on $(\Sbb^2,\d\Omega^2)$. In particular,
		\begin{align}\label{eq_meanvaluezero}
			\int_{\Sbb^2}f=0
		\end{align}
		and $\Delta_{\Sbb^2} f=-\lambda f$, where $\lambda=-l(l+1)$ for some $l\ge 1$. Consider the functions 
		\[
			\omega_s:=\frac{1}{1+sf},
		\]
		then for some sufficiently small $\varepsilon>0$, the spacelike cross sections $\Sigma_{\omega_s}$ are well-defined ($\omega_s>0$) and have strictly positive spacetime mean curvature $\mathcal{H}^2_s$ for all $s\in(-\varepsilon,\varepsilon)$. By Remark \ref{bem_codazzi} and the well-known formula for the scalar curvature, cf. \cite[(4.18)]{chenwang}, we recall that
		\begin{align}
		\begin{split}\label{eq_identitiesVs}
			\accentset{\circ}{A}&=4v_s^{-1}\accentset{\circ}{\Hess}_{\Sbb^2}v_s,\\
			\mathcal{H}^2&=4v_s^2-\btr{{}^{\Sbb^2}\!\nabla v_s}^2+4v_s\Delta_{\Sbb^2}v_s
		\end{split}
		\end{align}
		for $v_s:=w_s^{-1}=1+sf$, where ${}^{\Sbb^2}\!\nabla$, $\Hess_{\Sbb^2}$, $\Delta_{\Sbb^2}$ denote the gradient, Hessian, and Laplace operator with respect to the round metric $\d\Omega^2$, respectively. 
		
		Let $C>0$ be a fixed constant. We consider the function $F_C\colon (-\varepsilon,\varepsilon)\to\R$ defined as
		\[
			F_C(t):=C\cdot\btr{\Sigma_{\omega_s}}\int_{\Sigma_{\omega_s}}\newbtr{\accentset{\circ}{A}}^2+256\pi^2-\btr{\Sigma_{\omega_s}}\int_{\Sigma_{\omega_s}}(\mathcal{H}^2)^2.
		\]
		To show that $2$ is the optimal constant, it suffices to show that for every $0<C<2$ there exists some choice of $f$ such that $F_C(t_0)<0$ for some $t_0\in(-\varepsilon,\varepsilon)$. To this end, note that $F_C(0)=0$ and we compute the first and second derivative of $F_C$. Using \eqref{eq_identitiesVs} a direct computation gives
		\begin{align*}
		\btr{\Sigma_{\omega_s}}&=\int_{\Sbb^2}\omega_s^2,\\
			\int_{\Sigma_{\omega_s}}\newbtr{\accentset{\circ}{A}}^2&=16\int_{\Sbb^2}\newbtr{\accentset{\circ}{\Hess}_{\Sbb^2}v_s}^2,\\
			\int_{\Sigma_{\omega_s}}(\mathcal{H}^2)^2&=16\int_{\Sbb^2}\left(v_s-v_s^{-1}\btr{{}^{\Sbb^2}\!\nabla v_s}^2+\Delta_{\Sbb^2}v_s\right)^2.
		\end{align*}
		Although one can derive the first and second variation of $A$, $\mathcal{H}^2$ from the propagation equations, cf. \cite[Lemma 8]{wolff1}, the explicit identities with respect to the round metric above allow us to conclude by direct computation that
		\[
			\left.\frac{\d}{\d s}\btr{\Sigma_{\omega_s}}\right\vert_{s=0}=\left.\frac{\d}{\d s}
			\int_{\Sigma_{\omega_s}}\newbtr{\accentset{\circ}{A}}^2\right\vert_{s=0}=\left.\frac{\d}{\d s}	\int_{\Sigma_{\omega_s}}(\mathcal{H}^2)^2\right\vert_{s=0}=0,
		\]
		where we used \eqref{eq_meanvaluezero} and the fact that $v_0\equiv 1$. Similarly, direct computation gives
		\begin{align*}
			\left.\frac{\d^2}{\d s^2}\int_{\Sbb^2}\omega_s^2\right\vert_{s=0}&=4\int_{\Sbb^2}f^2,\\
			\left.\frac{\d^2}{\d s^2}\int_{\Sbb^2}\left(v_s-v_s^{-1}\btr{{}^{\Sbb^2}\!\nabla v_s}^2+\Delta_{\Sbb^2}v_s\right)^2\right\vert_{s=0}&=2\int_{\Sbb^2}\left(f^2+(\Delta_{\Sbb^2}f)^2\right)-8\int_{\Sbb^2}\btr{{}^{\Sbb^2}\!\nabla f}^2,\\
			\left.\frac{\d^2}{\d s^2}\int_{\Sbb^2}\newbtr{\accentset{\circ}{\Hess}_{\Sbb^2}v_s}^2\right\vert_{s=0}&=2\int_{\Sbb^2}\newbtr{\accentset{\circ}{\Hess}_{\Sbb^2}f}^2=\int_{\Sbb^2}(\Delta_{\Sbb^2}f)^2-2\btr{{}^{\Sbb^2}\!\nabla f}^2,
		\end{align*}
		where we used the Bochner formula \eqref{eq_bochner} to derive the second equality in the last line. We conclude that $F'_C(0)=0$ and
		\begin{align*}
			F_C''(t)&=64\pi\left(\int_{\Sbb^2}(C-2)(\Delta_{\Sbb^2}f)^2+(8-2C)\btr{{}^{\Sbb^2}\!\nabla f}^2-6f^2\right)\\
			&=64\pi\int_{\Sbb^2}f^2\left((C-2)\btr{\lambda}^2+(8-2C)\btr{\lambda}-6\right)<0
		\end{align*}
		for any $C<2$ and eigenvalue $\lambda$ sufficiently big. The claim then follows by Taylor approximation.
	\end{proof}
	\begin{bem}
		Observe that the polynomial $p_C(x)=(C-2)x^2+(8-2C)x-6$ is strictly positive on $[2,\infty)$ for all $C\ge 2$, which is consistent with the estimate proven in Theorem \ref{thm_tracefreeA4}.
	\end{bem}
\section{An elliptic estimate on $\mathbb{S}^2$ under a balancing condition}\label{sec_elliptic}
	We dedicate most of this subsection to prove the following proposition:
	\begin{prop}\label{prop_w22}
		Let $(\Sigma,\gamma)$ be a conformally round surface with conformal factor $\omega$ such that $\btr{\Sigma}=4\pi$ and $C_0^{-1}\le \omega\le C_0$, $C_0>1$. We further assume that $\Sigma$ satisfies the balancing condition
		\begin{align}\label{eq_balancing}
		\int_{\mathbb{S}^2}f_i\omega^3=0\text{  for all }i=1,2,3,
		\end{align}
		where $f_i$ denote first spherical harmonics. Then there exists an $\varepsilon>0$ only depending on $C_0$, such that if $\norm{\mathcal{K}-1}_{L^2(\mathbb{S}^2)}\le\varepsilon$, we have
		\[
		\norm{\omega-1}_{W^{2,2}(\mathbb{S}^2)}\le C\cdot\norm{\mathcal{K}-1}_{L^2(\mathbb{S}^2)},
		\]
		where $C$ is a constant only depending on $C_0$.
	\end{prop}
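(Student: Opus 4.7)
My plan is to recast the statement as a semilinear elliptic equation on $(\Sbb^2,\d\Omega^2)$ for $v:=\log\omega$, linearize about $v\equiv 0$, and invert the linearization on the orthogonal complement of its three-dimensional kernel, using the area normalization $\btr{\Sigma}=4\pi$ and the balancing condition \eqref{eq_balancing} to control the two lowest spherical-harmonic modes. Combining the conformal change law of the Gauss curvature on $(\Sbb^2,\d\Omega^2)$ with the formulas in Proposition \ref{prop_minkowskilightcone} yields the scalar PDE
\[
\Delta_0 v = 1 - e^{2v}(1+h),
\]
where $h:=\mathcal{K}-1$ and $\Delta_0$ denotes the round Laplacian. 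Expanding $e^{2v}=1+2v+2v^2+O(v^3)$ and setting $L:=\Delta_0+2$, this rewrites as $Lv=-h+N(v,h)$, where $N$ involves no derivatives of $v$ and satisfies
\[
\norm{N(v,h)}_{L^2(\Sbb^2)}\le C(C_0)\norm{v}_{L^\infty}\lr{\norm{v}_{L^2(\Sbb^2)}+\norm{h}_{L^2(\Sbb^2)}}
\]
thanks to the a priori bound $\norm{v}_{L^\infty(\Sbb^2)}\le\log C_0$.

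The key analytic input is the spectrum of $L$ on $(\Sbb^2,\d\Omega^2)$: $L$ is self-adjoint with eigenvalues $2-l(l+1)$ for $l\ge 0$, so $\ker L=\mathrm{span}\{f_1,f_2,f_3\}$ and on the $L^2$-orthogonal complement $W$ of $\mathrm{span}\{1,f_1,f_2,f_3\}$ the operator $L$ has spectral gap $4$. Decompose $v=v_0+\sum_{i=1}^3 v_if_i+v^\perp$ with $v^\perp\in W$. Expanding the area constraint $\int_{\Sbb^2}e^{2v}\d\mu_{\Sbb^2}=4\pi$ and the balancing condition $\int_{\Sbb^2}f_i e^{3v}\d\mu_{\Sbb^2}=0$ around $v=0$, the linear contributions in $v_0$ and $v_i$ are isolated by $L^2$-orthogonality of the spherical harmonics while all remaining terms are at least quadratic in $v$; this yields
\[
\btr{v_0}+\sum_{i=1}^3\btr{v_i}\le C(C_0)\norm{v}_{L^2(\Sbb^2)}^2.
\]
On $W$, the spectral gap gives the elliptic estimate $\norm{v^\perp}_{H^2}\le C\norm{Lv^\perp}_{L^2}$, and projecting $Lv=-h+N(v,h)$ onto $W$ together with the kernel control above produces
\[
\norm{v}_{H^2(\Sbb^2)}\le C(C_0)\lr{\norm{h}_{L^2(\Sbb^2)}+\norm{v}_{L^\infty}\norm{v}_{H^2(\Sbb^2)}}.
\]

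The main obstacle is that the a priori bound $\norm{v}_{L^\infty}\le\log C_0$ is not small enough to absorb the last term on the right directly. I would resolve this by a compactness/contradiction argument: if the conclusion fails, pass to a sequence $\omega_n$ with $\norm{h_n}_{L^2(\Sbb^2)}\to 0$ violating the estimate. The PDE together with the uniform $\omega$-bound yields a uniform $H^2$-bound on $v_n$, hence $C^0$-convergence of a subsequence (by compact Sobolev embedding $H^2(\Sbb^2)\hookrightarrow C^0(\Sbb^2)$) to a limit $v_\infty$ with $\mathcal{K}(\omega_\infty)\equiv 1$ satisfying the area and balancing conditions. By Remark \ref{bem_conformalfactorLorentztzransformation}(ii) and the one-to-one correspondence between timelike, future-pointing $4$-vectors and STCMC surfaces via $\textbf{Z}$, these three conditions force $\textbf{Z}(\Sigma_{\omega_\infty})=(1,\vec{0})$ and hence $\omega_\infty\equiv 1$, so $\norm{v_n}_{L^\infty}\to 0$, a contradiction. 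This produces the threshold $\varepsilon=\varepsilon(C_0)$ below which $\norm{v}_{L^\infty}$ can be taken arbitrarily small, at which point the preceding inequality collapses to $\norm{v}_{H^2(\Sbb^2)}\le C\norm{h}_{L^2(\Sbb^2)}$. Finally, $\omega-1=e^v-1$ combined with the $L^\infty$-bound on $v$ translates this into the asserted $W^{2,2}$-estimate for $\omega-1$. I expect this compactness step, which implicitly uses the rigidity of STCMC surfaces modulo restricted Lorentz transformations together with the fact that the balancing condition uniquely selects the standard representative, to be the main technical hurdle.
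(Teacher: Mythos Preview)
Your proposal is correct and follows essentially the same route as the paper's proof: both set $v=\log\omega$, decompose into spherical-harmonic modes, use the area normalization and the balancing condition \eqref{eq_balancing} to control the $l=0$ and $l=1$ modes via the quadratic remainder in $e^{kv}-1-kv$, exploit the spectral gap of $L=\Delta_{\Sbb^2}+2$ on the $l\ge 2$ modes, and close with the same compactness argument (your ``main obstacle'' is exactly the paper's Lemma~\ref{lem_w22_1}) to force $\norm{v}_{L^\infty}$ small when $\norm{\mathcal{K}-1}_{L^2}$ is small. The only cosmetic difference is that the paper first establishes the $L^2$-bound on $u$ and then bootstraps to $W^{2,2}$ via the equation, whereas you phrase the estimate directly at the $H^2$ level; also, the paper's bound on the constant mode carries an (in fact superfluous) $\norm{\mathcal{K}-1}_{L^2}$ term coming from Gauss--Bonnet rather than the area constraint, but this does not affect the argument.
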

	Estimates in similar spirit are well-known, see e.g. \cite{chang, klainermanszeftel, shiwangwu}. In fact, we will closely follow the strategy outlined by Shi--Wang--Wu \cite{shiwangwu}, replacing the sup-estimates with the application of the H\"older inequality. Compared to \cite{shiwangwu} we assume additional uniform sup-bounds on $\omega$, and we note the different balancing condition \eqref{eq_balancing} with respect to the usual one found in the literature, see e.g. \cite{chang, changyang, klainermanszeftel,onofri, shiwangwu}, which here is motivated by the definition of the associated $4$-vector, cf. Section \ref{sec_as4vector}.
	
	We first establish several auxiliary lemmas for the smooth function $u:= \ln(\omega)$ on $\mathbb{S}^2$.
	\begin{lem}\label{lem_w22_1}
		Under the assumptions of Proposition \ref{prop_w22} and for any $\eta>0$, there exists a $\delta>0$ such that if
		\[
		\norm{\mathcal{K}-1}_{L^2(\mathbb{S}^2)}\le \delta,
		\]
		then $\btr{u}_\infty\le \eta$.
	\end{lem}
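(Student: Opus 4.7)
I would argue by contradiction via a compactness argument. Suppose the conclusion fails: then there exist $\eta_0>0$ and a sequence of conformal factors $\omega_n$, with $u_n := \ln\omega_n$, satisfying all the hypotheses of Proposition \ref{prop_w22}, such that
\[
\|\mathcal{K}_n - 1\|_{L^2(\Sbb^2)} \longrightarrow 0 \quad\text{while}\quad \btr{u_n}_\infty \ge \eta_0
\]
for every $n$. The starting point is the standard conformal transformation law for the Gauss curvature of $\gamma_n = \omega_n^2\,\d\Omega^2 = e^{2u_n}\,\d\Omega^2$, namely
\[
\Delta_{\Sbb^2} u_n = 1 - \omega_n^2\,\mathcal{K}_n.
\]
Since $C_0^{-1}\le\omega_n\le C_0$, the right-hand side is uniformly bounded in $L^2(\Sbb^2)$, and elliptic regularity on the round sphere gives a uniform bound $\|u_n\|_{W^{2,2}(\Sbb^2)} \le C(C_0)$.

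Next I would extract convergence. By reflexivity and the Rellich--Kondrachov theorem on the $2$-sphere, after passing to a subsequence $u_n \rightharpoonup u_\infty$ weakly in $W^{2,2}(\Sbb^2)$ and, since $W^{2,2}(\Sbb^2)\hookrightarrow C^0(\Sbb^2)$ compactly in dimension two, $u_n\to u_\infty$ strongly in $C^0(\Sbb^2)$. In particular $\omega_n \to \omega_\infty := e^{u_\infty}$ uniformly, so $\omega_n^2\mathcal{K}_n \to \omega_\infty^2$ strongly in $L^2$ (using $\|\mathcal{K}_n-1\|_{L^2}\to 0$). Passing to the limit in the equation above yields
\[
\Delta_{\Sbb^2} u_\infty = 1 - \omega_\infty^2,
\]
which means $(\Sbb^2,\omega_\infty^2\d\Omega^2)$ is a conformally round metric of constant Gauss curvature $\mathcal{K}_\infty \equiv 1$. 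By Remark \ref{bem_codazzi}, $\omega_\infty$ must be of the form
\[
\omega_\infty(\vec{x}) = \frac{\rho}{\sqrt{1+\newbtr{\vec{a}}^2}+\vec{a}\cdot\vec{x}}
\]
for some $\rho>0$ and $\vec{a}\in\R^3$.

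Finally, I would use the normalization and balancing conditions to pin down $\omega_\infty\equiv 1$, yielding the contradiction $\btr{u_n}_\infty\to 0$. By the uniform convergence, $\int_{\Sbb^2}\omega_\infty^2 = \lim\int_{\Sbb^2}\omega_n^2 = 4\pi$ and $\int_{\Sbb^2}f_i\omega_\infty^3 = 0$ for $i=1,2,3$. In view of the identification carried out in Remark \ref{bem_conformalfactorLorentztzransformation}(ii), the STCMC conformal factor $\omega_\infty$ corresponds to an associated $4$-vector $\textbf{Z}(\Sigma_{\omega_\infty})$ whose spatial components are proportional to $\vec{a}$; the balancing condition therefore forces $\vec{a}=0$, and then the area normalization forces $\rho=1$. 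Hence $u_\infty\equiv 0$, contradicting $\btr{u_n}_\infty\ge \eta_0$ together with the uniform convergence $u_n\to u_\infty$. The main obstacle in making this rigorous is not any single step but rather ensuring that strong $C^0$ convergence is actually available to both pass to the limit in the nonlinear term $\omega_n^2\mathcal{K}_n$ and to contradict the pointwise lower bound on $\btr{u_n}_\infty$; the dimension-two Sobolev embedding $W^{2,2}\hookrightarrow C^0$ is precisely what makes this work.
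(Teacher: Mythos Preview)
Your argument is correct and follows essentially the same contradiction-by-compactness strategy as the paper: bound $u_n$ in $W^{2,2}$ via the Gauss curvature equation and elliptic estimates, extract a limit using the compact embedding $W^{2,2}\hookrightarrow C^0$ on $\Sbb^2$, identify the limit as a constant-curvature metric, and use the balancing and area conditions to force $u_\infty\equiv 0$. The only cosmetic difference is that the paper additionally invokes $C^{0,\alpha}$ and $W^{1,2}$ convergence and explicitly bootstraps to smoothness before applying the classification, whereas you pass to the limit weakly and appeal directly to Remarks~\ref{bem_codazzi} and~\ref{bem_conformalfactorLorentztzransformation}(ii); a one-line elliptic bootstrap would make your use of the smooth classification fully rigorous.
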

	\begin{proof}
		Assume that this is false. Then there exists $\eta_0>0$ and a sequence $(u_k)_{k\in\mathbb{N}}$ of smooth functions corresponding to conformally round surfaces $\Sigma_k$ with $\btr{\Sigma_k}=4\pi$, $w_k=e^{u_k}$ satisfies \eqref{eq_balancing}, and $0<\eta_0\le \btr{u_k}_\infty\le C_1$, $C_1:=\ln(C_0)$ for all $k$ with 
		\[
		\norm{\mathcal{K}_k-1}_{L^2(\mathbb{S}^2)}\to 0
		\]
		as $k\to\infty$. Note that with respect to $u_k$ the Gauss curvature $\mathcal{K}_k$ satisfies
		\[
		\Delta_{\mathbb{S}^2}u_k=1-e^{2u_k}\mathcal{K}_k,
		\]
		and in particular, by the above properties on the sequence,
		\[
		\norm{\Delta_{\mathbb{S}^2}u_k}_{L^2(\mathbb{S}^2)}\le C
		\]
		for some constant independent of $k$. By the uniform estimate on $\btr{u_k}_\infty$, standard elliptic estimates yield that $\norm{u_k}_{W^{2,2}(\mathbb{S}^2)}$ is bounded independent of $k$, cf. \cite[Appendix H]{besse}. As $W^{2,2}$ embeds compactly into $W^{1,2}$ and $C^{0,\alpha}$ for some $\alpha\in(0,1)$ on $\mathbb{S}^2$, cf. \cite[Appendix C]{besse}, there exists a subsequence $(u_k)$ such that $u_k\to u_\infty\in W^{1,2}\cap C^0$ (after possibly taking successive subsequences) with convergence both in $C^0$ and $W^{1,2}$. In particular, $u_\infty$ satisfies $\eta_0\le \btr{u_\infty}_\infty\le C_1$,
		\[
		\int_{\mathbb{S}^2}f_ie^{3u_\infty}=0,
		\]
		and weakly solves the equation 
		\[
		\Delta_{\mathbb{S}^2}u_\infty=1-e^{2u_\infty}.
		\]
		By appealing to standard arguments in regularity theory, we see that $u_\infty$ is indeed smooth and solves the above equation in the strong sense. In particular $(\mathbb{S}^2,e^{2u_\infty})$ is a smooth conformally round surface with area $4\pi$ and constant Gauss curvature everywhere equal to $1$. However, this gives an immediate contradiction as $u_\infty=0$ is the unique solution under the balancing condition \eqref{eq_balancing}, cf. Remark \ref{bem_codazzi} and Remark \ref{bem_conformalfactorLorentztzransformation} (ii).
	\end{proof}
	\begin{lem}\label{lem_w22_2}
		For any bounded, measurable function $u$ on $\mathbb{S}^2$ with $\btr{u}\le \ln(C_0)$, $C_0>1$ and $k\in \mathbb{N}$, we have
		\[
		\norm{1+ku-e^{ku}}_{L^2(\mathbb{S}^2)}\le C\btr{u}_\infty \norm{u}_{L^2(\mathbb{S}^2)}
		\]
		for some constant $C$ only depending on $C_0$ and $k$.
	\end{lem}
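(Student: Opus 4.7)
The plan is to apply a pointwise Taylor estimate for the function $g(x) := 1 + kx - e^{kx}$ and then integrate. Since $g(0) = 0$, $g'(0) = 0$, and $g''(x) = -k^2 e^{kx}$, Taylor's theorem with integral remainder (or equivalently two applications of the fundamental theorem of calculus) gives
\[
g(x) = -\int_0^x (x-t)\,k^2 e^{kt}\,\mathrm{d}t,
\]
from which the pointwise bound
\[
|1 + kx - e^{kx}| \le \tfrac{k^2}{2}\,x^2 \max_{t\in[0,x]} e^{kt}
\]
is immediate. Under the hypothesis $|u| \le \ln(C_0)$, the factor $\max e^{kt}$ is uniformly bounded by $C_0^{k}$, so pointwise on $\mathbb{S}^2$ we obtain
\[
\bigl|1 + ku(\vec{x}) - e^{ku(\vec{x})}\bigr| \le \tfrac{k^2}{2}\,C_0^{k}\,u(\vec{x})^2.
\]

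The second step is to square this inequality and integrate over $\mathbb{S}^2$. One gets
\[
\int_{\mathbb{S}^2} \bigl|1 + ku - e^{ku}\bigr|^2 \le \tfrac{k^4}{4}\,C_0^{2k} \int_{\mathbb{S}^2} u^4,
\]
and then the trivial bound $u^4 \le |u|_\infty^2 \cdot u^2$ yields
\[
\int_{\mathbb{S}^2} \bigl|1 + ku - e^{ku}\bigr|^2 \le \tfrac{k^4}{4}\,C_0^{2k}\,|u|_\infty^2\,\|u\|_{L^2(\mathbb{S}^2)}^2.
\]
Taking square roots gives the desired estimate with the explicit constant $C = \tfrac{k^2}{2}\,C_0^{k}$, which depends only on $C_0$ and $k$ as required.

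There is essentially no obstacle here; the statement is purely analytic and the only ingredient is a second-order Taylor expansion together with the $L^\infty$ bound on $u$. The role of the constant $C_0$ is simply to control the factor $e^{ku}$ in the Taylor remainder, and the factor $|u|_\infty$ appears when one converts the pointwise quadratic bound $u^2$ into a linear factor times $u$ inside the $L^2$ norm.
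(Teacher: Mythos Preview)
Your proof is correct and follows essentially the same approach as the paper: a pointwise Taylor estimate giving $\lvert 1+ku-e^{ku}\rvert \le C u^2$, followed by $\norm{u^2}_{L^2}\le \btr{u}_\infty\norm{u}_{L^2}$. The only cosmetic differences are that the paper phrases the pointwise bound as coming from ``the mean value theorem'' and passes through $\norm{u}_{L^4}^2$ explicitly, whereas you write out the Taylor remainder and track the constant $C=\tfrac{k^2}{2}C_0^k$.
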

	\begin{proof}
		For any $p\in\mathbb{S}^2$ with $u(p)\not=0$, we find that
		\begin{align*}
		\btr{1+ku(p)-e^{ku(p)}}\le C \btr{u(p)}^2,
		\end{align*}
		using the mean value theorem. As the inequality is trivially satisfied whenever $u(p)=0$, we conclude
		\[
		\norm{1+ku-e^{ku}}_{L^2(\mathbb{S}^2)}\le C\norm{u}_{L^4(\mathbb{S}^2)}^{2}\le C\btr{u}_\infty \norm{u}_{L^2(\mathbb{S}^2)}
		\]
	\end{proof}
	We now prove Proposition \ref{prop_w22}:
	\begin{proof}[Proof of Proposition \ref{prop_w22}]
		We decompose $u$ into $u=u_0+u_1+u_2$ with respect to an $L^2$-orthonormal basis of spherical harmonics on $\mathbb{S}^2$, where
		\[
		u_0:=\frac{1}{4\pi}\int_{\mathbb{S}^2}u,
		\]
		$u_1:=a^1f_1+a^2f_2+a^3f_3$ with
		\[
		a^i:=\frac{3}{4\pi}\int_{\mathbb{S}^2}u\cdot f_i.
		\]
		Note that $u_2:=u-u_0-u_1$ is perpendicular to $u_0,u_1$ in $L^2$. We first aim to bound the $L^2$-norm of $u$. As
		\[
		\norm{u}_{L^2(\mathbb{S}^2)}\le \left(\btr{u_0}+\sum\limits_{i=1}^3\newbtr{a^i}+\norm{u_2}_{L^2(\mathbb{S}^2)}\right),
		\]
		it suffices to bound the individual terms. In the following, $C$ will always denote a constant only depending on $C_0$ and universal constants that may vary from line to line. Direct estimation gives
		\begin{align*}
		\btr{u_0}=\frac{1}{8\pi}\btr{\int 2u}
		&=\frac{1}{8\pi}\btr{\int 2u+1-\mathcal{K}e^{2u}}\\
		&=\frac{1}{8\pi}\btr{\int(2u+1-e^{2u})+(1-\mathcal{K})e^{2u}}\\
		&\le C\left(\norm{1+2u-e^{2u}}_{L^2(\mathbb{S}^2)}+\norm{1-\mathcal{K}}_{L^2(\mathbb{S}^2)}\right),
		\end{align*}
		and
		\begin{align*}
		4\pi \newbtr{a^i}=\btr{\int 3u f_i}=\btr{\int (3u+1-e^{3u})f_i}\le C\norm{1+3u-e^{3u}}_{L^2(\mathbb{S}^2)},
		\end{align*}
		where we used the Gauss--Bonnet theorem and the balancing condition \eqref{eq_balancing} in the first and second computation, respectively, and the H\"older inequality in both cases in the last line. It remains to bound $\norm{u_2}_{L^2(\mathbb{S}^2)}$.
		
		We note that by our choice of decomposition, we find that for the operator $L$ defined as
		\[
		L(v):=\Delta v+2v
		\]
		$u_2$ satisfies
		\begin{align*}
		L(u_2)=L(u)-2u_0=(1+2u-e^{2u})+(1-\mathcal{K})e^{2u}-2u_0,
		\end{align*}
		where we used the explicit formula for the Gauss curvature. Multiplying the equation by $u_2$, integration by parts yields
		\[
		\int\btr{\nabla u_2}^2-2u_2^2=-\int u_2L(u_2)\le C\norm{u_2}_{L^2(\mathbb{S}^2)}\left(\norm{1+2u-e^{2u}}_{L^2(\mathbb{S}^2)}+\norm{1-\mathcal{K}}_{L^2(\mathbb{S}^2)}\right),
		\]
		where we used the H\"older inequality and the already established bound on $u_0$. Moreover, we also know that by our choice of decomposition
		\[
		u_2=\sum\limits_{l=2}^\infty\sum\limits_{k=-l}^l a_{l,k}Y_l^k,
		\]
		where $Y_l^k$ denote spherical harmonics with eigenvalues $\lambda_l=-l(l+1)\le - 6$. Hence, partial integration yields
		\[
		\int\btr{\nabla u_2}^2=-\int\sum\limits_{l=2}^\infty\sum\limits_{k=-l}^l\lambda_l a_{l,k}^2(Y_l^k)^2\ge 6\norm{u_2}_{L^2(\mathbb{S}^2)}^2,		
		\]
		and we conclude that
		\[
		4\norm{u_2}_{L^2(\mathbb{S}^2)}^2\le C\norm{u_2}_{L^2(\mathbb{S}^2)}\left(\norm{1+2u-e^{2u}}_{L^2(\mathbb{S}^2)}+\norm{1-\mathcal{K}}_{L^2(\mathbb{S}^2)}\right).
		\]
		Using the Peter--Paul inequality, we obtain
		\[
		\norm{u_2}_{L(\mathbb{S}^2)}\le C\left(\norm{1+2u-e^{2u}}_{L^2(\mathbb{S}^2)}+\norm{1-\mathcal{K}}_{L^2(\mathbb{S}^2)}\right)
		\]
		after taking a square root. Combining the above estimates, we find that
		\begin{align*}
		\norm{u}_{L^2(\mathbb{S}^2)}&\le C\left(\norm{1-\mathcal{K}}_{L^2(\mathbb{S}^2)}+\norm{1+2u-e^{2u}}_{L^2(\mathbb{S}^2)}+\norm{1+3u-e^{3u}}_{L^2(\mathbb{S}^2)}\right)\\
		&\le C\left(\norm{1-\mathcal{K}}_{L^2(\mathbb{S}^2)}+\btr{u}_\infty\norm{u}_{L^2(\mathbb{S}^2)}\right),
		\end{align*}
		using Lemma \ref{lem_w22_2}. Let $\eta>0$ such that $\eta C\le \frac{1}{2}$. Then by Lemma \ref{lem_w22_1} there exists $\delta(\eta)$ (only depending on $C_0$) such that
		\[
		\norm{u}_{L(\mathbb{S}^2)}\le 2C\norm{1-\mathcal{K}}_{L^2(\mathbb{S}^2)}
		\]
		if $\norm{1-\mathcal{K}}_{L^2(\mathbb{S}^2)}\le \delta(\eta)$. As
		\[
		\norm{\Delta u}_{L^2(\mathbb{S}^2)}=\norm{1-\mathcal{K}e^{2u}}_{L^2(\mathbb{S}^2)}\le C\left(\norm{u}_{L^2(\mathbb{S}^2)}+\norm{1-\mathcal{K}}_{L^2(\mathbb{S}^2)}\right),
		\]
		it follows by standard elliptic estimates, cf. \cite[Appendix H]{besse}, that
		\[
		\norm{u}_{W^{2,2}(\mathbb{S}^2)}\le C\norm{1-\mathcal{K}}_{L^2(\mathbb{S}^2)}
		\]
		if $\norm{1-\mathcal{K}}_{L^2(\mathbb{S}^2)}\le \delta(\eta)$. The claim then follows with $\varepsilon:=\delta(\eta)$ and noting that \linebreak $\norm{\omega-1}_{W^{2,2}(\mathbb{S}^2)}\le C\norm{u}_{W^{2,2}(\mathbb{S}^2)}$.
	\end{proof}
	Before stating the main result in the next subsection, we give a partial result formulated as an intrinsic result on conformally round surfaces.
	\begin{kor}\label{kor_W22}
		Let $(\Sigma,\gamma)$ be a conformally round surface with non-negative scalar curvature $\operatorname{R}\ge 0$ and area radius $r$. Assume that $C_0^{-1}r\le \omega\le C_0r$ for some constant $C_0$, and that the balancing condition \eqref{eq_balancing} holds. Then there exits $\varepsilon>0$ only depending on $C_0$ such that if
		\[
		\btr{\Sigma}\cdot \newnorm{\accentset{\circ}{\Hess}_{\mathbb{S}^2}v}_{L^2(\mathbb{S}^2)}^2\le \varepsilon
		\]
		for $v:=\frac{1}{\omega}$, then
		\[
		\norm{\omega-r}_{W^{2,2}(\mathbb{S}^2)}\le C\btr{\Sigma}\cdot \newnorm{\accentset{\circ}{\Hess}_{\mathbb{S}^2}v}_{L^2(\mathbb{S}^2)}
		\]
		for a constant $C$ only depending on $C_0$.
	\end{kor}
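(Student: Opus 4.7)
The plan is to reduce the statement to Proposition \ref{prop_w22} applied on a rescaled surface of unit area radius, using Proposition \ref{thm_tracefreeA3} to translate the smallness of the trace-free Hessian of $v$ into smallness of the Gauss curvature deviation $\mathcal{K}-1$. First I would pass to the rescaled conformal factor $\tilde\omega:=\omega/r$. Then $\tilde\Sigma_{\tilde\omega}$ has area $4\pi$, the uniform bound becomes $C_0^{-1}\le\tilde\omega\le C_0$, the balancing condition \eqref{eq_balancing} is homogeneous in $\omega$ and hence preserved, and $\operatorname{R}\ge 0$ is scale-invariant. Both the hypothesis $\btr{\Sigma}\cdot\newnorm{\accentset{\circ}{\Hess}_{\mathbb{S}^2}v}_{L^2(\mathbb{S}^2)}^2\le\varepsilon$ and the desired inequality transform naturally under this rescaling, so it suffices to prove the corresponding statement for $\tilde\Sigma$ and then rescale back.

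Second, I would convert the trace-free scalar second fundamental form norm into a round-sphere Hessian norm. By Remark \ref{bem_codazzi} we have $\accentset{\circ}{A}=4\tilde\omega\,\accentset{\circ}{\Hess}_{\mathbb{S}^2}\tilde v$ with $\tilde v=1/\tilde\omega$, and together with $\tilde\gamma=\tilde\omega^2\,\d\Omega^2$ and the conformal invariance of the trace-free part of a $(0,2)$-tensor, a direct computation gives
\[
\int_{\tilde\Sigma}\newbtr{\accentset{\circ}{A}}^2\,\d\mu_{\tilde\gamma}=16\int_{\mathbb{S}^2}\newbtr{\accentset{\circ}{\Hess}_{\mathbb{S}^2}\tilde v}^2\,\d\mu_{\mathbb{S}^2}.
\]
Since $\mathcal{H}^2=2\operatorname{R}\ge 0$ on $\tilde\Sigma$, Proposition \ref{thm_tracefreeA3} yields $\newnorm{\mathcal{H}^2_{\tilde\Sigma}-\fint\mathcal{H}^2_{\tilde\Sigma}}_{L^2(\tilde\Sigma)}^2\le C\newnorm{\accentset{\circ}{A}}_{L^2(\tilde\Sigma)}^2$. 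Combining this with $\mathcal{K}=\mathcal{H}^2/4$, the identity $\fint\mathcal{H}^2_{\tilde\Sigma}=4$, and the equivalence of $L^2(\tilde\Sigma)$- and $L^2(\mathbb{S}^2)$-norms provided by $C_0^{-1}\le\tilde\omega\le C_0$, one obtains
\[
\newnorm{\mathcal{K}_{\tilde\Sigma}-1}_{L^2(\mathbb{S}^2)}^2\le C(C_0)\,\newnorm{\accentset{\circ}{\Hess}_{\mathbb{S}^2}\tilde v}_{L^2(\mathbb{S}^2)}^2.
\]

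Third, choosing $\varepsilon$ small enough in terms of $C_0$ so that the resulting $\newnorm{\mathcal{K}_{\tilde\Sigma}-1}_{L^2(\mathbb{S}^2)}$ falls below the smallness threshold of Proposition \ref{prop_w22}, that proposition then yields $\newnorm{\tilde\omega-1}_{W^{2,2}(\mathbb{S}^2)}\le C(C_0)\newnorm{\accentset{\circ}{\Hess}_{\mathbb{S}^2}\tilde v}_{L^2(\mathbb{S}^2)}$. Undoing the rescaling via $\omega-r=r(\tilde\omega-1)$, $\accentset{\circ}{\Hess}_{\mathbb{S}^2}\tilde v=r\,\accentset{\circ}{\Hess}_{\mathbb{S}^2}v$, and rewriting $r^2=\btr{\Sigma}/(4\pi)$ then produces the claim. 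The main difficulty is essentially bookkeeping: ensuring that all intermediate constants depend only on $C_0$, and verifying that the smallness condition required by Proposition \ref{prop_w22} is effectively controlled by the scaling-invariant input $\btr{\Sigma}\cdot\newnorm{\accentset{\circ}{\Hess}_{\mathbb{S}^2}v}_{L^2(\mathbb{S}^2)}^2$; no genuinely new ideas beyond those already in the paper are required.
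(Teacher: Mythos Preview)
Your proposal is correct and follows essentially the same route as the paper: rescale to unit area radius, use the Gauss equation and Remark \ref{bem_codazzi} to relate $\accentset{\circ}{A}$ to $\accentset{\circ}{\Hess}_{\mathbb{S}^2}v$, apply Proposition \ref{thm_tracefreeA3} to control $\mathcal{K}-1$ in $L^2$, invoke Proposition \ref{prop_w22}, and then undo the rescaling. Your write-up is in fact more explicit about the bookkeeping than the paper's terse proof, but the argument is the same.
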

	\begin{proof}
		By the Gauss equation, $\Sigma$ has $\mathcal{H}^2\ge 0$ as a spacelike cross section of the standard lightcone. Note further that
		\[
		\newnorm{\accentset{\circ}{A}}_{L^2(\Sigma)}=4\newnorm{\accentset{\circ}{\Hess}_{\mathbb{S}^2}v}_{L^2(\mathbb{S}^2)}^2,
		\]
		cf. Remark \ref{bem_codazzi},and that for the rescaled surface with conformal factor $\widetilde{\omega}=\frac{\omega}{r}$ it holds that  
		\begin{align*}
		\norm{\widetilde{\mathcal{K}}-1}^2_{L^2(\mathbb{S}^2)}\le C(C_0)\btr{\Sigma}\norm{\mathcal{H}^2-\fint\mathcal{H}^2}^2_{L^2(\Sigma)},
		\end{align*}
		Then estimate immediately follows from Theorem \ref{thm_tracefreeA3} and Proposition \ref{prop_w22}, after multiplying by the area radius $r$.
	\end{proof}\pagebreak
\section{A De\,Lellis--M\"uller type estimate on the lightcone}\label{sec_main}
	We now state the main result.
	\begin{thm}\label{thm_delellismueller}
		Let $(\Sigma,\gamma)$ be a spacelike cross section of the future-pointing standard lightcone in the $1+3$-Minkowski spacetime with $\mathcal{H}^2\ge 0$, and associated $4$-vector ${\textbf{Z}}={\textbf{Z}}(\Sigma)$. Assume further that $\Sigma$ is $\kappa$-bounded for some fixed $\kappa>0$. Then there exists $\varepsilon>0$ only depending on $\kappa$ such that if
		\[
		\btr{\Sigma}\cdot\newnorm{\accentset{\circ}{A}}^2_{L^2(\Sigma)}\le \varepsilon
		\]
		then
		\[
		\newnorm{\omega-\omega_{{\textbf{Z}}}}_{W^{2,2}(\mathbb{S}^2)}\le C(\kappa,\textbf{Z})\cdot\btr{\Sigma}\newnorm{\accentset{\circ}{A}}_{L^2(\Sigma)},
		\]
		where $C(\kappa,\Lambda)$ is a constant only depending on $\kappa$ and $\textbf{Z}$.
	\end{thm}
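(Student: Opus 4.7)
The plan is to reduce Theorem~\ref{thm_delellismueller} to the intrinsic estimate of Corollary~\ref{kor_W22} by using the Lorentz equivariance of $\textbf{Z}$ to enforce the balancing condition~\eqref{eq_balancing}, and then to invert the transformation. First, by Proposition~\ref{prop_Lorentztransform} together with Lemma~\ref{lem_kappabounded2}, I would choose a Lorentz boost $\Lambda \in \operatorname{SO}^+(1,3)$ (unique up to rotations on $\Sbb^2$) satisfying $\Lambda(\textbf{Z}) = (r_{\textbf{Z}}, \vec{0})$, where $r_{\textbf{Z}} := \sqrt{-\eta(\textbf{Z}, \textbf{Z})}$. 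Applying~\eqref{eq_associatedZconformal} to $\Lambda(\Sigma)$ then yields the balancing condition $\int_{\Sbb^2} f_i \omega_\Lambda^3 \d\mu = 0$ for $i=1,2,3$. By Lemma~\ref{lem_kappabounded}, $\Lambda(\Sigma)$ remains $\kappa$-bounded, and Lemma~\ref{lem_kappabounded2} provides $(1+\kappa)^{-2} r \le \omega_\Lambda \le (1+\kappa)^2 r$, where $r$ is the common area radius of $\Sigma$ and $\Lambda(\Sigma)$.

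Next, I would apply Corollary~\ref{kor_W22} to $\Lambda(\Sigma)$. Since $\Lambda|_\Sigma \colon \Sigma \to \Lambda(\Sigma)$ is an isometry of the induced Riemannian metrics, the tensorial norm $\newnorm{\accentset{\circ}{A}}_{L^2}$ and the sign of $\mathcal{H}^2$ are preserved (cf.\ Remark~\ref{bem_Lorentztransform}); the smallness hypothesis on $|\Sigma|\newnorm{\accentset{\circ}{A}}^2_{L^2(\Sigma)}$ converts, via Remark~\ref{bem_codazzi}, to the corresponding hypothesis of the Corollary on $|\Sigma|\newnorm{\accentset{\circ}{\Hess}_{\Sbb^2}v_\Lambda}^2_{L^2(\Sbb^2)}$. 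Taking $\varepsilon$ small enough (depending on $\kappa$) therefore yields
\[
\newnorm{\omega_\Lambda - r}_{W^{2,2}(\Sbb^2)} \le C(\kappa)\, |\Sigma| \,\newnorm{\accentset{\circ}{A}}_{L^2(\Sigma)}.
\]
To replace $r$ by $r_{\textbf{Z}}$, I would use that $r_{\textbf{Z}} = \frac{1}{|\Sigma|}\int_{\Sbb^2} \omega_\Lambda^3 \d\mu$ and $|\Sigma| = \int_{\Sbb^2} \omega_\Lambda^2 \d\mu = 4\pi r^2$, which rearranges to $|\Sigma|(r_{\textbf{Z}} - r) = \int_{\Sbb^2} \omega_\Lambda^2(\omega_\Lambda - r)\d\mu$. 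Combined with the $\kappa$-bound and Cauchy--Schwarz this gives $|r_{\textbf{Z}} - r| \le C(\kappa)\newnorm{\omega_\Lambda - r}_{L^2(\Sbb^2)}$, and since $r_{\textbf{Z}} - r$ is constant on $\Sbb^2$, the triangle inequality then yields the analogous estimate for $\newnorm{\omega_\Lambda - r_{\textbf{Z}}}_{W^{2,2}(\Sbb^2)}$.

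Finally, I would transfer the estimate back via $\Lambda^{-1}$. By Lemma~\ref{lem_conformalfactorLorentztzransformation} applied to $\Lambda^{-1}$, denoting the associated boost vector by $\vec{a}'$ and the associated M\"obius transformation of $\Sbb^2$ by $\Phi_{\Lambda^{-1}}$ (both determined by $\textbf{Z}$), we have
\[
\omega - \omega_{\textbf{Z}} = \frac{\omega_\Lambda \circ \Phi_{\Lambda^{-1}} - r_{\textbf{Z}}}{\sqrt{1 + \newbtr{\vec{a}'}^2} - \vec{a}' \cdot \vec{f}},
\]
where $\vec{f} := (f_1, f_2, f_3)$ denotes the triple of standard first spherical harmonics. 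Both multiplication by the smooth positive factor $(\sqrt{1+\newbtr{\vec{a}'}^2} - \vec{a}' \cdot \vec{f})^{-1}$ and pullback by the smooth diffeomorphism $\Phi_{\Lambda^{-1}}$ of $\Sbb^2$ are bounded operators on $W^{2,2}(\Sbb^2)$ with norms determined entirely by $\vec{a}'$, yielding the claimed estimate with constant $C(\kappa, \textbf{Z})$. The main obstacle is this final step: careful bookkeeping is needed to verify that every constant coming from Sobolev multiplication and composition depends only on $\kappa$ and $\textbf{Z}$ (through $\vec{a}'$ and $\Phi_{\Lambda^{-1}}$) and not on $\Sigma$ itself, and in particular that these constants blow up only in the degenerate limit $\newbtr{\vec{a}'} \to \infty$.
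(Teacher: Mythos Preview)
Your proposal is correct and follows essentially the same route as the paper: boost via $\textbf{Z}$ to achieve the balancing condition, apply the elliptic estimate (you invoke Corollary~\ref{kor_W22}, the paper unpacks it into Proposition~\ref{thm_tracefreeA3} plus Proposition~\ref{prop_w22}), and then pull back through Lemma~\ref{lem_conformalfactorLorentztzransformation}. The only cosmetic differences are that you swap $r$ for $r_{\textbf{Z}}$ \emph{before} applying $\Lambda^{-1}$ (the paper pulls back first to $\frac{r}{r_{\textbf{Z}}}\omega_{\textbf{Z}}$ and corrects afterwards) and that you control $|r_{\textbf{Z}}-r|$ by Cauchy--Schwarz rather than by the $W^{2,2}\hookrightarrow C^{0}$ embedding the paper uses.
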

	\begin{proof}
		By Lemma \ref{lem_kappabounded2} there exists $\Lambda\in \operatorname{SO}^+(1,3)$ (uniquely determined by $\textbf{Z}$ up to isometries on $\Sbb^2$) such that $\omega_{\Lambda^{-1}(\textbf{Z})}=r_{\textbf{Z}}=\sqrt{-\eta(\textbf{Z},\textbf{Z})}$, as this is equivalent to
		\[
		\int_{\Sbb^2} f_i\omega^3=0\text{ for all }i=1,2,3.
		\] 
		In particular, the components of $\Lambda$ are uniquely determined by $\textbf{Z}$ (up to a choice of rotation).
		By Remark \ref{bem_Lorentztransform}, Lemma \ref{lem_kappabounded2}, Theorem \ref{thm_tracefreeA3}, and the Gauss equation \eqref{eq_gaußcurvature}, we find for the rescaled surface corresponding to $\widetilde{\omega}_{\Lambda^{-1}}:=r^{-1}\omega_{\Lambda^{-1}}$ that
		\begin{align*}
		\newnorm{\widetilde{\mathcal{K}}_{\Lambda^{-1}}-1}^2_{L(\mathbb{S}^2)}&\le C(\kappa)\btr{\Lambda^{-1}(\Sigma)}\norm{\mathcal{H}_{\Lambda^{-1}}^2-\fint\mathcal{H}_{\Lambda^{-1}}^2}^2_{L^2(\Lambda^{-1}(\Sigma))}
		\\
		&=C(\kappa)\btr{\Sigma}\norm{\mathcal{H}^2-\fint\mathcal{H}^2}^2_{L^2(\Sigma)}
		\\
		&\le 2C(\kappa)\btr{\Sigma}\newnorm{\accentset{\circ}{A}}^2_{L^2(\Sigma)}.
		\end{align*}
		Hence, for $\varepsilon$ sufficiently small, we may use Proposition \ref{prop_w22} as $\widetilde{\omega}_{\Lambda^{-1}}$ has area $4\pi$ and satisfies the balancing condition \eqref{eq_balancing}. Multiplying by $r$, we find
		\[
		\newnorm{\omega_{\Lambda^{-1}}-r}_{W^{2,2}(\mathbb{S}^2)}\le C(\kappa)\btr{\Sigma}\newnorm{\accentset{\circ}{A}}_{L^2(\Sigma)}
		\]
		Now notice that by the definition of $\omega_{\textbf{Z}}$, $r_{\textbf{Z}}$ and Lemma \ref{lem_conformalfactorLorentztzransformation}, we find that
		\[
		\omega-\frac{r}{r_{\textbf{Z}}}\omega_{\textbf{Z}}=\frac{\omega_{\Lambda^{-1}}\circ\Phi_\Lambda-r}{\sqrt{1+\newbtr{\vec{a}^2}}-\vec{a}_if_i},
		\]
		where $\Phi_\Lambda$ and $\vec{a}$ are uniquely determined by $\textbf{Z}$ up to a rotation. Hence
		\[
		\newnorm{\omega-\frac{r}{r_{\textbf{Z}}}\omega_{\textbf{Z}}}_{W^{2,2}(\mathbb{S}^2)}\le C(\textbf{Z})\newnorm{\omega_{\Lambda^{-1}}-r}_{W^{2,2}(\mathbb{S}^2)}\le  C(\kappa,\textbf{Z})\btr{\Sigma}\newnorm{\accentset{\circ}{A}}_{L^2(\Sigma)},
		\]
		where the constant $C(\textbf{Z})$ does not depend on the choice of rotation as they act as isometries on $\Sbb^2$.
		On the other hand, note that $W^{2,2}$ embedds compactly into $C^0$ on $\mathbb{S}^2$. Thus, we
		find that $\btr{\omega_{\Lambda^{-1}}-r}_\infty\le C(\kappa)\btr{\Sigma}\newnorm{\accentset{\circ}{A}}_{L^2(\Sigma)}$. Notice further that
		\[
		r_{\textbf{Z}}=r_{\textbf{Z}(\Lambda^{-1}(\Sigma))}=\frac{1}{\btr{\Lambda^{-1}(\Sigma)}}\int_{\mathbb{S}^2}\omega_{\Lambda^{-1}}^{3}\le r+\btr{\omega_{\Lambda^{-1}}-r}_\infty.
		\]
		By Remark \ref{bem_kappabounded}, we conclude that
		\[
		\btr{r_{\textbf{Z}}-r}\le C(\kappa)\btr{\Sigma}\newnorm{\accentset{\circ}{A}}_{L^2(\Sigma)}.
		\]
		Hence,
		\[
		\newnorm{\frac{r}{r_{\textbf{Z}}}\omega_{\textbf{Z}}-\omega_{\textbf{Z}}}_{W^{2,2}(\mathbb{S}^2)}=\btr{r-r_{\textbf{Z}}}\cdot\norm{\frac{\omega_{\textbf{Z}}}{r_{\textbf{Z}}}}_{W^{2,2}(\mathbb{S}^2)}\le C(\textbf{Z})\btr{r_{\textbf{Z}}-r}\le C(\kappa,\textbf{Z})\btr{\Sigma}\newnorm{\accentset{\circ}{A}}_{L^2(\Sigma)},
		\]
		where the constant $C(\textbf{Z})$ again does not depend on the choice of rotation.
		The claim then follows from the triangle inequality.
	\end{proof}
	\begin{bem}\label{bem_delellismueller}\,
	\begin{enumerate}
		\item[(i)] We note that the constant $C(\kappa,\textbf{Z})$ in fact only depends on $\kappa$ and $\frac{\textbf{Z}}{r_\textbf{Z}}$. As 
		\[
		\textbf{Z}=r_{\textbf{Z}}(\sqrt{1+\newbtr{\vec{a}}^2},\vec{a})
		\] 
		for some $\vec{a}\in\R^3$, we may take $\Lambda=\Lambda_{\vec{a}}$ above. By this choice, it is straightforward to check that one can moreover choose $C(\kappa,\textbf{Z})$ to continuously depend on $\frac{\textbf{Z}}{r_\textbf{Z}}$.
		\item[(ii)] Note that as an intermediate step, we have proven that
		\begin{align}\label{eq_delellismueller2}
		\norm{\frac{\omega}{r}-\frac{\omega_{\textbf{Z}}}{r_{\textbf{Z}}}}^2_{W^{2,2}(\mathbb{S}^2)}\le  C(\kappa,\textbf{Z})\btr{\Sigma}\newnorm{\accentset{\circ}{A}}_{L^2(\Sigma)}^2,
		\end{align}
		which allows for a direct comparison of the spacelike cross section $\Sigma$ and the STCMC surface of reference both rescaled to have area radius $1$. 
	\end{enumerate}
	\end{bem}
	Using the observations in Remark \ref{bem_delellismueller}, we quickly establish the following conclusion from Theorem \ref{thm_delellismueller}.
	\begin{thm}\label{thm_delellismueller2}
		Let $(\Sigma_k)_{k\in\N}$ be a sequence of spacelike cross sections of the standard lightcone in the $1+3$-Minkowski spacetime with $\mathcal{H}^2_k\ge 0$ with conformal factor $\omega_k$, area radius $r_k$ and associated $4$-vector ${\textbf{Z}}_k$. Assume further that $\Sigma_k$ is $\kappa$-bounded for all $k$ with $\kappa$ independent of $k$, there exists $\delta>0$ such that
		\begin{align}\label{eq_deltatimelike}
		\left({\textbf{Z}}^0_k\right)^2\ge (1+\delta)\left(\left({\textbf{Z}}^1_k\right)^2+\left({\textbf{Z}}^2_k\right)^2+\left({\textbf{Z}}^3_k\right)^2\right)
		\end{align}
		for all $k$, and
		\begin{align}\label{eq_Akconvergence}
		\btr{\Sigma_k}\cdot\newnorm{\accentset{\circ}{A}_k}^2_{L^2(\Sigma_k)}\to 0
		\end{align}
		as $k\to\infty$. Then there exists a subsequence $(k_l)_{l\in\N}$ and a surface of constant spacetime mean curvature $\widetilde\Sigma$ with conformal factor $\widetilde{\omega}$ and area radius $1$ such that
		\[
		\frac{\omega_{k_l}}{r_{k_l}}\to\widetilde{\omega}\text{ in }W^{2,2}(\mathbb{S}^2),
		\]
		and 
		\[
		\frac{{\textbf{Z}}_{k_l}}{r_{{\textbf{Z}}_{k_l}}}\to{\textbf{Z}}(\widetilde\Sigma),
		\]
		where $r_{{\textbf{Z}}_{k_l}}=\sqrt{-\eta({\textbf{Z}}_{k_l},{\textbf{Z}}_{k_l})}$, and $\eta({\textbf{Z}}(\widetilde\Sigma),{\textbf{Z}}(\widetilde\Sigma))=-1$.
	\end{thm}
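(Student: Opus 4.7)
The plan is to apply Theorem \ref{thm_delellismueller} along the sequence and extract a limit via a compactness argument on the normalized associated $4$-vectors $\textbf{Z}_k/r_{\textbf{Z}_k}$, which all lie on the future-pointing unit hyperboloid $\mathcal{H}\definedas \{p\in\R^{1,3}:\eta(p,p)=-1,\,p^0>0\}$. First I would observe that the uniform timelike condition \eqref{eq_deltatimelike} is precisely what keeps this sequence away from the asymptotic null cone at infinity: dividing \eqref{eq_deltatimelike} by $r_{\textbf{Z}_k}^2=(\textbf{Z}_k^0)^2-\sum_i(\textbf{Z}_k^i)^2$ gives $\sum_i(\textbf{Z}_k^i)^2/r_{\textbf{Z}_k}^2\le 1/\delta$, so that $\textbf{Z}_k/r_{\textbf{Z}_k}$ is confined to a compact subset of $\mathcal{H}$. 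Passing to a subsequence $(k_l)$, I may therefore assume $\textbf{Z}_{k_l}/r_{\textbf{Z}_{k_l}}\to \textbf{Z}_\infty$ for some $\textbf{Z}_\infty\in\mathcal{H}$.

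Next, I would apply Theorem \ref{thm_delellismueller} along the subsequence. By Remark \ref{bem_delellismueller}(i) the constant $C(\kappa,\textbf{Z}_{k_l})$ depends continuously on $\textbf{Z}_{k_l}/r_{\textbf{Z}_{k_l}}$ and is therefore uniformly bounded along $(k_l)$, so the rescaled estimate \eqref{eq_delellismueller2} together with hypothesis \eqref{eq_Akconvergence} will give
\[
\newnorm{\tfrac{\omega_{k_l}}{r_{k_l}}-\tfrac{\omega_{\textbf{Z}_{k_l}}}{r_{\textbf{Z}_{k_l}}}}_{W^{2,2}(\Sbb^2)}\longrightarrow 0.
\]
The explicit formula \eqref{eq_ZvectorSTCMC} shows that $\omega_z$ is homogeneous of degree one in $z$, hence $\omega_{\textbf{Z}_{k_l}}/r_{\textbf{Z}_{k_l}}=\omega_{\textbf{Z}_{k_l}/r_{\textbf{Z}_{k_l}}}$. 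On compact subsets of $\mathcal{H}$ the denominator $z^0-z^i f_i$ is uniformly positive and $\omega_z$ depends smoothly on $z$, so $\omega_{\textbf{Z}_{k_l}/r_{\textbf{Z}_{k_l}}}\to \omega_{\textbf{Z}_\infty}$ smoothly, hence in $W^{2,2}(\Sbb^2)$. Setting $\widetilde{\omega}\definedas \omega_{\textbf{Z}_\infty}$ and combining, I obtain $\omega_{k_l}/r_{k_l}\to\widetilde{\omega}$ in $W^{2,2}(\Sbb^2)$ as required, and $\widetilde{\Sigma}\definedas\Sigma_{\widetilde{\omega}}$ is the desired STCMC limit.

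To conclude, Remark \ref{bem_conformalfactorLorentztzransformation}(ii) will identify $\textbf{Z}(\widetilde{\Sigma})=\textbf{Z}_\infty$, delivering both $\eta(\textbf{Z}(\widetilde{\Sigma}),\textbf{Z}(\widetilde{\Sigma}))=-1$ and the claimed convergence $\textbf{Z}_{k_l}/r_{\textbf{Z}_{k_l}}\to \textbf{Z}(\widetilde{\Sigma})$; that $\widetilde{\Sigma}$ has area radius $1$ follows from $\textbf{Z}_\infty\in\mathcal{H}$ (or equivalently from taking the $L^2$-limit in $\int_{\Sbb^2}(\omega_{k_l}/r_{k_l})^2\,\mathrm{d}\mu_{\Sbb^2}=4\pi$). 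The main obstacle is controlling the constant $C(\kappa,\textbf{Z}_k)$ of Theorem \ref{thm_delellismueller} uniformly along the sequence: without \eqref{eq_deltatimelike} the normalized $4$-vectors could drift to the asymptotic null cone of $\mathcal{H}$, along which both the STCMC reference $\omega_{\textbf{Z}_k}/r_{\textbf{Z}_k}$ and the constant itself degenerate. Assumption \eqref{eq_deltatimelike} is tailored exactly to prevent this loss of compactness, and is what upgrades the single-surface statement of Theorem \ref{thm_delellismueller} into a subsequential convergence result.
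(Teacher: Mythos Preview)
Your proposal is correct and follows essentially the same approach as the paper: compactify the normalized $4$-vectors $\textbf{Z}_k/r_{\textbf{Z}_k}$ on the unit hyperboloid using \eqref{eq_deltatimelike}, extract a convergent subsequence, apply the rescaled estimate \eqref{eq_delellismueller2} with a uniformly bounded constant via Remark~\ref{bem_delellismueller}(i), and conclude by the smooth dependence of $\omega_z$ on $z$ together with the triangle inequality. The only cosmetic difference is that you invoke the homogeneity $\omega_{\textbf{Z}}/r_{\textbf{Z}}=\omega_{\textbf{Z}/r_{\textbf{Z}}}$ explicitly, whereas the paper writes this identity out in coordinates.
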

	\begin{bem}
		As $W^{2,2}$ embeds compactly into $C^0$ on $\Sbb^2$, cf. \cite[Appendix C]{besse}, the rescaled conformal factors converge in $C^0$. Note that for any uniformly converging sequence of spacelike cross sections, we have convergence of the associated $4$-vectors to the associated $4$-vector of the limiting surface, which is easy to verify from the definition of ${\textbf{Z}}$, cf. Section \ref{sec_as4vector}. In particular, the sequence is $\kappa$-bounded with $\kappa$ independent of $k$ and satisfies \eqref{eq_deltatimelike} for $k$ sufficiently large. Hence, these are necessary conditions for the conclusions of Theorem \ref{thm_delellismueller2} to hold.
	
	As a  counterexample, the STCMC surfaces corresponding to the conformal factors
	\[
	\omega_k=\frac{1}{\sqrt{1+k^2}-k\cos\theta}
	\]
	have area radius $1$, $\mathcal{H}^2_k=4$, are $\kappa$-bounded for any $\kappa>0$ independent of $k$, and $\accentset{\circ}{A}_k=0$ for all $k$. However, the sequence $\omega_k$ does not converge to a limit, even in $C^0$, due to the non-compactness of the restricted Lorentz group $\operatorname{SO}^+(1,3)$. This issue is precisely avoided by \eqref{eq_deltatimelike}, as the renormalized associated $4$-vectors will be restricted to a compact subset of the hyperboloid $\{p\in \R^{1,3}\colon \eta(p,p)=-1,\, p^0>0\}$.
	\end{bem}
	\begin{proof}[Proof of Theoem \ref{thm_delellismueller2}]
		We define the sequence 
		\[
		{z}_k:=\frac{{\textbf{Z}}_{k_l}}{r_{{\textbf{Z}}_{k_l}}}.
		\]
		In particular $\eta({z}_k,{z}_k)=-1$ and as \eqref{eq_deltatimelike} is preserved under rescaling, we see that
		\[
		(1+\delta)\left(({z}^1_k)^2+({z}^2_k)^2+({z}^3_k)^2\right)\le ({z}^0_k)^2=({z}^1_k)^2+({z}^2_k)^2+({z}^3_k)^2+1.
		\]
		Hence
		\[
		{z}_k\in C_\delta:=\left\{p\in \R^{1,3}\colon (p^1,p^2,p^3)\in \overline{B}_{\frac{1}{\delta}}(0),\, p^0=\sqrt{1+(p^1)^2+(p^2)^2+(p^3)^2}\right\}
		\]
		for all $k$, where $\overline{B}_{\frac{1}{\delta}}(0)$ denotes the closed ball of radius $\frac{1}{\delta}$ in $\R^3$ centered around the origin. As the set is compact, there exists a subsequence $(k_l)_{l\in\N}$ such that
		\[
		{z}_{k_l}\to {z}\in C_\delta.
		\]
		In particular, ${z}$ is timelike, future-pointing with $\eta({z},{z})=-1$. Let $\widetilde{\Sigma}$ be the STCMC surface with conformal factor $\widetilde{\omega}:=\omega_{{z}}$ as defined in Remark \ref{bem_conformalfactorLorentztzransformation} (ii). Then $\widetilde{\Sigma}$ has area radius $\widetilde{r}=r_{{z}}:=\sqrt{-\eta({z},{z})}=1$. Moreover, as ${z}_{k_l}\to {z}$, we see by the explicit definition \eqref{eq_ZvectorSTCMC} that
		\begin{align}\label{eq_THMdelellismueller1}
		\omega_{{{z}_{k_l}}}\to \widetilde{\omega}
		\end{align}
		in $C^2$ (in fact $C^m$ for any fixed $m$). It only remains to show the $W^{2,2}$ convergence.
		
		To this end, recall that $\omega_{{{z}_{k_l}}}=\frac{1}{r_{{\textbf{Z}}_{k_l}}}\omega_{{\textbf{Z}}_{k_l}}$. It follows that
		\begin{align}\label{eq_THMdelellismueller2}
		\norm{\frac{\omega_{k_l}}{r_{k_l}}-\omega_{{{z}_{k_l}}}}^2_{W^{2,2}(\mathbb{S}^2)}\le  C(\kappa,\textbf{Z}_{k_l})\btr{\Sigma_{k_l}}\newnorm{\accentset{\circ}{A}_{k_l}}_{L^2(\Sigma)}^2,
		\end{align}
		from \eqref{eq_delellismueller2}, where the constant $C(\kappa,\textbf{Z}_{k_l})$ does not depend on $r_\textbf{Z}$. As ${z}_{k_l}\to {z}$, by Remark \ref{bem_delellismueller} (i) there exists a suitable constant $C(\kappa,{z})$ only depending on $\kappa$ and ${z}$ such that
		\[
		C(\kappa,z_{k_l})\le C(\kappa,{z})
		\]	
		fo all $l$. The claim then follows from \eqref{eq_THMdelellismueller1} and \eqref{eq_THMdelellismueller2} using assumption \eqref{eq_Akconvergence} and the triangle inequality.
	\end{proof}

\section{Comments}\label{sec_discussion}
	We briefly consider the standard lightcone in the higher dimensional Minkowski spacetime $(\R^{1,n},\eta)$, $n\ge 3$. Deriving all the relevant quantities as in Section \ref{sec_prelim}, we note that possibly up to some dimension dependent constants almost all geometric properties are retained in the higher dimensional case. In particular, any spacelike cross section $\Sigma$ is conformal to the round sphere $\Sbb^{n-1}$ and the scalar second fundamental form $A=\ul{\theta}\chi$ satisfies
	\[
		\nabla_iA_{jk}=\nabla_jA_{ik}.
	\]
	Thus, Proposition \ref{thm_tracefreeA3} can be directly extended to higher dimensions in the following way:
	If $\Ric\ge 0$, then
	\[
		\norm{\mathcal{H}^2-\fint\mathcal{H}^2}^2_{L^2(\Sigma)}\le \frac{k}{k-1}\newnorm{\accentset{\circ}{A}}_{L^2(\Sigma)}^2,
	\]
	where $k=n-1$ is the dimension of $\Sigma$. As the Gauss-Equation yields that
	\begin{align*}
		\accentset{\circ}{\Ric}&=\frac{k-2}{2k}\accentset{\circ}{A},\\
		\operatorname{R}&=\frac{k-1}{k}\mathcal{H}^2,
	\end{align*}
	one can check that in fact the above estimate is indeed directly equivalent to the almost-Schur lemma by De\,Lellis--Topping \cite{Delellistopping} in the conformal class of the round sphere $\Sbb^{n-1}$ for all $n\ge 4$. In particular, the above estimate is optimal for all spacelike cross sections of the standard lightcone in $\R^{1,n}$ with $\Ric\ge 0$ for all $n\ge 3$.
	
	We further note that the corresponding estimate by De\,Lellis--M\"uller \cite{delellismueller} in $\R^3$ does not require assumptions on the sign of any intrinsic, or extrinsic curvature quantities, as assumed here, in the work of De\,Lellis--Topping \cite{Delellistopping}, or in the short proof of the same estimate in $\R^3$ by Huisken using inverse mean curvature flow, see Remark \ref{bem_tracefreeA1} (ii). To obtain this general result, De\,Lellis--M\"uller utilize deep analytical tools from partial differential equations. However, for $k\ge 5$ De\,Lellis--Topping \cite{Delellistopping} construct explicit counterexamples to show that one can not drop the assumption of non-negative Ricci curvature even for topological spheres. Whether such counterexpamples can be constructed in the conformal class of the round sphere for all $k\ge 2$, or if an estimate of the above form can be derived for all spacelike cross sections of the standard lightcone is subject of future work.

\bibliography{bib_delellismueller}

\nopagebreak
\bibliographystyle{plain}
\end{document}